\newcommand{\tr}{\textcolor{red}}
\def\5n{\negthinspace \negthinspace \negthinspace \negthinspace \negthinspace }
\def\4n{\negthinspace \negthinspace \negthinspace \negthinspace }
\def\3n{\negthinspace \negthinspace \negthinspace }
\def\2n{\negthinspace \negthinspace }
\def\1n{\negthinspace }
\def\dbE{\mathbb{E}}     
\def\dbF{\mathbb{F}} \def\sF{\mathscr{F}}    
\def\dbH{\mathbb{H}}
\def\dbN{\mathbb{N}}     
\def\dbP{\mathbb{P}}     
\def\dbR{\mathbb{R}}
\def\Om{\Omega}
\def\ms{\medskip}
\def\no{\noindent}        \def\q{\quad}                      
    \def\hb{\hbox}                     
         \def\rf{\eqref}                    
            \def\({\Big (}
                  \def\){\Big )}
\def\leq{\leqslant}       \def\geq{\geqslant}
\def\ges{\geqslant}       \def\esssup{\mathop{\rm esssup}}   \def\[{\Big[}
           \def\]{\Big]}
          \def\tr{\hbox{\rm tr$\,$}}         \def\cd{\cdot}
\def\e{\varepsilon}             
           \def\i{\infty}   
\theoremstyle{plain}
\theoremstyle{rmk}
\newtheorem{theorem}{Theorem}[section]
\newtheorem{proposition}[theorem]{Proposition}
\newtheorem{lemma}[theorem]{Lemma}
\newtheorem{remark}[theorem]{Remark}
\newtheorem{assumption}{Assumption}
\begin{document}
	\title{\Large \bf A global stochastic maximum principle for delayed forward-backward stochastic control systems
	}
	
	\author{
		%
		Feng Li\thanks{
			Department of Mathematics,
			Southern University of Science and Technology, Shenzhen, Guangdong, 518055, China
			(Email: {\tt 12331011@mail.sustech.edu.cn}).}~~~~
		%
	}
	\date{}
	\maketitle
	\no\bf Abstract. \rm
	In this paper, we study a delayed forward-backward stochastic control system in which all the coefficients depend on the state and control terms, and the control domain is not necessarily convex. 
	A global stochastic maximum principle is obtained by using a new method. 
	More precisely, this method introduces first-order and second-order auxiliary equations and offers a novel approach to deriving the adjoint equations as well as the variational equation for $y^\e-y^*$.
%

	%
	\ms
	
	\no\bf Key words:  
	\rm 
	Delayed forward-backward stochastic control systems; nonconvex control domain; global stochastic maximum principle.
	\ms
	
	\no\bf AMS subject classifications. \rm 93E20, 60H10, 35K15.
	\section{Introduction}

	Let $\{W_t; t\ge0\} $ be a $d$-dimensional standard Brownian motion defined on some complete probability space $(\Om,\sF,\dbP)$,  and let $\dbF \triangleq \{\sF_{t}\}_{t \ges 0}$ be the natural filtration of ${W}$ augmented by all the $\dbP$-null sets in $\sF$. 
	Let  $T\in (0, +\i)$ be  a fixed terminal time,  $\delta \in (0, T)$  a fixed  time delay,   $\kappa \in \dbR$    a fixed constant, and $U\subset \dbR^k$ a nonempty subset.
	In this paper, we consider the following  delayed forward-backward stochastic control system:
	\begin{equation}\label{2.1}\left\{\begin{aligned}
			dx(t) & = b\(t, x(t), x(t-\delta), \int_{-\delta}^{0} e^{\kappa \theta}x(t+\theta)d\theta, u(t), u(t-\delta)\)dt \\
			&\q  + \sigma\(t, x(t), x(t-\delta), \int_{-\delta}^{0} e^{\kappa \theta}x(t+\theta)d\theta, u(t), u(t-\delta) \)dW(t),\q t\in [0,T];\\
			dy(t)& = -f\(t, x(t), x(t-\delta), \int_{-\delta}^{0} e^{\kappa \theta}x(t+\theta)d\theta, y(t),  z(t),%
			u(t), u(t-\delta)\)dt  +  z(t) dW(t),  \ \  t\in [0,T]; \\
			x(t) &=\xi(t), \q 
			u(t) = \gamma(t), \q t \in [-\delta, 0];\\
			y(T) &= h\(x(T), x(T-\delta), \int_{-\delta}^{0} e^{\kappa \theta}x(T+\theta)d\theta\),
		\end{aligned}\right.\end{equation}
	where  $\xi(\cdot)$ and $\gamma(\cd)$  \textcolor{blue}{are given deterministic functions,}
	$b: [0, T]\times\dbR^n\times \dbR^n \times \dbR^n \times U \times U \rightarrow \dbR^n$, 
	$\sigma: [0, T]\times\dbR^n\times \dbR^n \times \dbR^n\times U \times U \rightarrow \dbR^{n\times d}$,
	$h: \dbR^n \times \dbR^n \times \dbR^n \rightarrow \dbR$, 
	and 
	$f: [0, T]\times \dbR^n\times \dbR^n\times \dbR^n \times\dbR\times\dbR^{d}\times U \times U \rightarrow \dbR$ are some functions, 
	and the control process $u(\cd)$ comes from the following admissible control set:
	\begin{equation}
		\begin{aligned}
			\mathcal{U}_{ad} \triangleq \Big\{u(\cd) : [-\delta, T] \rightarrow \dbR^k \ \big|\ & \hb{$u(\cd)$  is a  $U\hb{-valued}$, $\dbF$-progressively measurable process,  and satisfies} \   \\  &
			u(t) = \gamma(t), \ \  t\in [-\delta, 0] \ \ \hb{and} \ \
			\dbE[\mathop{\sup}\limits_{t\in [-\delta, T]}|u(t)|^p]<\i, \ \ \forall p\geq 1 \Big\}.  
	\end{aligned}\end{equation}
	
	{{\it \bf Problem (O)}}. Find a control $u^*(\cd)$ over $\mathcal{U}_{ad}$ such that the system  \eqref{2.1} is satisfied and  the cost functional $J(u(\cdot))\triangleq y(0)$ is minimized, i.e.,
	\begin{align}\label{problem}
		J(u^*(\cd)) = \mathop{\inf}\limits_{u(\cdot)\in \mathcal{U}_{ad}}J(u(\cdot)).
	\end{align}
	%
	%
	%
	%
	%
	%
	Moreover, we refer to system \rf{2.1} as a \emph{delayed forward stochastic control system} if it is independent of $y$ and $z$.
	Additionally, the system \rf{2.1} is called a \emph{forward-backward stochastic control system} if it is independent of all delay  terms.
	On one hand, the problem of forward-backward stochastic control systems in which the control domain is not necessarily convex was listed as an open problem by Peng \cite{peng1999open}.
	On the other hand, stochastic differential equations (SDEs, for short) typically model processes where state changes depend  on the current state. In many practical situations, however, the state evolution also depends on past states, making stochastic differential delay equations (SDDEs, for short) a more appropriate modeling framework (see, for example, the applications of SDDEs in  \cite{Chen2010delaySDDE, Shenzeng2014, Xushenzhang2018}). Therefore, it is both natural and meaningful to extend the forward-backward stochastic control systems to the more general framework described by system \eqref{2.1}. 
	So far, there has been extensive literature that addresses delayed forward stochastic control systems and forward-backward stochastic control systems.
	%
	%

	For delayed forward stochastic control systems, to overcome the main difficulty arising from the delay term, Peng and Yang \cite{Peng2009anticipated} introduced the anticipated backward stochastic differential equations (ABSDEs, for short) as a dual process to the SDDEs.
	Since then, by applying this approach,  a lot of researchers have studied the delayed forward stochastic control systems under various cases.  For example,  Meng--Shi \cite{Meng2021} addressed the stochastic optimal control problem with delay where the control domain is not necessarily convex. However, to eliminate the cross terms of states and delay states, the solution of some second-order adjoint equations in Meng--Shi \cite{Meng2021} is required to be identically zero. Some other literature on using the duality between SDDEs and ABSDEs can be found in  \cite{oksendal2011optimal, Chen2010delaySDDE, anticipate 1, anticipate 2, anticipate 3}, etc.
	Recently, Meng--Shi--Wang--Zhang  \cite{meng2025general} obtained a general maximum principle  without the strong condition imposed in Meng--Shi \cite{Meng2021} by introducing a new method---transforming the delayed variational equations into stochastic Volterra integral equations (SVIEs, for short), and then using the duality between SVIEs and backward stochastic Volterra integral equations (BSVIEs, for short). 

	For forward-backward stochastic control systems, 
	when the control domain is convex,  some related works can be found in Dokuchaev--Zhou \cite{convex 1}, Ji--Zhou \cite{convex 2},  Wu \cite{convex 3}, and Lv--Tao--Wu \cite{lv2016maximum}, etc. 
	When the control domain is not necessarily convex, it was, in fact, an open problem listed in Peng \cite{peng1999open} and has attracted considerable attention.
	For instance, 
	Shi--Wu \cite{Shi2006} derived a global maximum principle  under the assumption that the diffusion term of the forward equation  is  independent of the control process.
	%
	Wu \cite{wu2013general} and Yong \cite{Yong2010} used Ekeland's variational principle to obtain a global maximum principle by regarding the process $z$ as a control process and the terminal condition as a constraint.
	Recently, Hu \cite{Hu} 
	applied It\^{o}'s formula to derive first-order and second-order adjoint equations and solved  the open problem of Peng \cite[Section 4]{peng1999open} completely.  
	Hu \cite{Hu}'s method has been successfully extended to more general systems. For example, Hu--Ji--Xue \cite{hu2018global} and Hu--Ji--Xu \cite{hu2022global} derived the global maximum principle for fully coupled forward-backward stochastic control system and with quadratic generator, respectively. 
	Hao--Meng \cite{hao2020global} and Buchdahn--Li--Li--Wang \cite{buckdahn2024global} derived a global maximum principle for  mean-field forward-backward control system with linear growth and quadratic growth generator, respectively.

	%
	%
	%
	%

	However,  for Problem (O),  to the best of our knowledge, there have been few results  so far. 
	The only related work is that of Huang--Shi \cite{huang2012maximum}, in which a general maximum principle was obtained under the assumption that the diffusion term is  independent of the control process. 
	Due to the absence of the control process in the diffusion term, Huang--Shi \cite{huang2012maximum} only needed to deal with the first-order variational equation of the state equation, which is essentially analogous to the case of a convex domain. 
	%
	In this paper, we will study the general system \rf{2.1}, in which the diffusion term in  the forward equation contains both the state and control terms, and aim to obtain a general maximum principle. 
	In contrast to Huang--Shi \cite{huang2012maximum}, we need to handle both the first-order and second-order variational equations.  As a result, problem (O) is more challenging.

	From the literature analyzed above, it can be observed that there are two possible approaches to handle Problem (O). 
	One approach is to use the duality between SDDEs and ABSDEs in combination with Hu \cite{Hu} 's method, while the other is to use the duality between SVIEs and BSVIEs, also together with Hu \cite{Hu}'s method. If the first approach is adopted, Meng--Shi \cite{Meng2021} indicates that a highly restrictive condition would likely need to be imposed. Therefore, we do not consider the first method.
	On the other hand, since SVIEs/BSVIEs do not possess Itô's formula, Hu \cite{Hu}'s method may not be directly applicable. 
	Consequently, we propose a new method. Specifically, we first transform SDDEs into SVIEs, then introduce two adjoint equations and  the variational equation of $y^\e-y^*$ and estimate it. We further introduce two auxiliary equations concerning $\e$, and derive a duality relationship between the SVIEs and the auxiliary equations by following arguments similar to those in Meng–Shi–Wang–Zhang \cite{meng2025general} (see \autoref{non-recursive}).  Based on this analysis, we prove that  $\int_{0}^{T}|I^{\e_j}(s) - I(s)|ds = o(\e_j)$ in a  subsequence $\{\e_j\}_{j=1}^{\i}$ (see \autoref{est-I-I}), and consequently obtain $ y^{\e_j}(0)-y^*(0)-\hat{y}(0) = o(\e_j)$ (see \autoref{estimate-last}) as well as the maximum principle (see \autoref{SMP-thm}). In addition, we also use the new method to explore the forward-backward stochastic control system (see \autoref{section4}). 
	
	We note that the dual relationship employed in our method is different from that in Hu \cite{Hu}, which leads to substantial difficulties for us. 
	More precisely, the dual relationship in our paper is given by $\dbE[\cd] = o(\e)$, rather than $\dbE[|\cd|^\beta] = o(\e), \beta >1$ (see \autoref{non-recursive}). Consequently, on one hand, if one applies $L^\beta, \beta >1$
	estimate to \eqref{3.54} as in Hu \cite{Hu}, one cannot obtain $\dbE[|\Pi(T)|^\beta + (\int_{0}^{T} |\Lambda(t)| dt)^\beta] = o(\e)$. 
	On the other hand, $\tilde{f}^\e_y(s) \hat{y}^\e(s) + \tilde{f}^\e_z(s)\hat{z}^\e(s) - \big\{f_y(s) \hat{y}(s) + f_z(s)\hat{z}(s)\big\}$ can be rewritten either as $f_y(s) \mathcal{Y}^\e(s) + f_z(s) \mathcal{Z}^\e(s) + \big(\tilde{f}^\e_y(s)-f_y(s)\big)\hat{y}^\e(s) + \big(\tilde{f}^\e_z(s)-f_z(s)\big)^\top\hat{z}^\e(s)$ or as $\tilde{f}^\e_y(s) \mathcal{Y}^\e(s) + \tilde{f}^\e_z(s) \mathcal{Z}^\e(s) + \big(\tilde{f}^\e_y(s)-f_y(s)\big)\hat{y}(s) + \big(\tilde{f}^\e_z(s)-f_z(s)\big)^\top\hat{z}(s)$.
	However, due to the particular structure of the dual relationship, we are unable to derive results analogous to equations (28) and (29) in Hu \cite{Hu}, i.e., we are unable to obtain the result of $
	\dbE\big[ \mathop{\sup}\limits_{t\in [0, T]} |\hat{y}^\e(t)|^{\beta} + \big(\int_{0}^{T}|\hat{z}^\e(t)|^2dt\big)^{\frac{\beta}{2}}\big] = o(\e^{\frac{\beta}{2}}), \beta >1$. As a result, the estimate 
	  $\dbE[\int_{0}^{T}| \big(\tilde{f}^\e_y(s)-f_y(s)\big)\hat{y}^\e(s) + \big(\tilde{f}^\e_z(s)-f_z(s)\big)^\top\hat{z}^\e(s)|ds] = o(\e)$ cannot be obtained by this approach.
	Hence, we are forced to use the second transformation, and in this case, we must prove that $\dbE[\int_{0}^{T}|I^\e(s) - I(s)|ds] = o(\e)$, which constitutes a central technical ingredient of our analysis.

	The innovations and contributions of this paper are as follows:
	\begin{itemize}
		\item [$\rm (i)$] We provide a new method to deal with the forward-backward stochastic control systems.
		%
		We note that, under the forward-backward stochastic control systems, our maximum principle is the same as those in Yong \cite{Yong2010}, but is different from those in Hu \cite{Hu}.
		 In contrast to the approach of Yong \cite{Yong2010},  although our maximum principle is the same, we do not treat 
		$z$ as a control process; consequently, our method is different from that in Yong \cite{Yong2010}. 
	   Compared with Hu \cite{Hu}’s approach, although we adopt Hu \cite{Hu}'s idea of introducing a variational equation for $Y^\e - Y$, the way in which this variational equation is determined is different. This difference, in turn, leads to adjoint equations and maximum principle that differ from those in Hu \cite{Hu}. In more detail, in proving equation (31) of \cite{Hu}, Hu \cite{Hu} determines the variational equation for $Y^\e - Y$ from the dual relationship in equation (34). This dual relationship is derived via Itô’s formula and does not involve expectation. 
		In contrast, we do not determine the variational equation for 
		$y^\e - y^*$ from \eqref{3.54}. Instead, we first solve equation \eqref{3.54} to obtain \eqref{y}, and then determine the variational equation for $y^\e - y^*$ through the dual relationship in \autoref{non-recursive}. As this dual relationship involves expectation; consequently, for the key term $p(s)\delta \sigma(s) I_{E_\e}(s)$ in equation (17) of Hu \cite{Hu}, we have
		$\dbE[\int_{0}^{T} p(s)\delta \sigma(s) I_{E_\e}(s)dW(s)] = 0$, and therefore, we do not need to handle the key term as in Hu \cite{Hu}.

		\item [$\rm (ii)$] We use the new method to derive the maximum principle for delayed forward-backward stochastic control systems where the control domain is not necessarily convex and all the coefficients can contain state and control terms. 
		Note that the duality between SVIEs and BSVIEs used in the proof of \autoref{non-recursive} necessarily involves expectation. More precisely, we need to handle the following deduction:
		$ 
		\dbE\big[X_1(r)^\top\{ \int_r^T Q^\e_2(r, \theta)dW(\theta)\}X_1(r)\big]
		= \dbE\big[X_1(r)^\top \dbE_r\big\{\int_r^T Q^\e_2(r, \theta)dW(\theta)\big\}X_1(r)\big] = 0.
		$
		If the term above does not involve expectation, it would not vanish. Consequently, the dual relationship in Hu \cite{Hu} is not suitable for addressing this problem.
		%
		
	\end{itemize}

	The rest of the paper is organized as follows. \autoref{section2} introduces the key notation and several preliminary results used later. In \autoref{Section3}, we develop a new method to explore the stochastic maximum principle for delayed forward-backward stochastic control systems. In \autoref{section4}, we present directly the stochastic maximum principle for  forward-backward stochastic control systems. Finally, in \autoref{section 5}, we provide a detailed proof of the existence and uniqueness result of system \eqref{2.1}.

	\section{Preliminaries}\label{section2}
	Let $(W_t)_{t\in[0, +\i)}$ be a $d$-dimensional standard Brownian motion defined on some complete probability space $(\Om,\sF,\dbP)$ and $\dbF \triangleq \{\sF_{t}\}_{t \ges 0}$ is the natural filtration of ${W}$ augmented by all the $\dbP$-null sets in $\sF$.
	The notation $\mathbb{R}^{m \times d}$ denotes the space of $m \times d$-matrix $M$, equipped with the Euclidean norm defined as $|M| = \sqrt{\text{tr}(MM^{\top})}$, where $M^{\top}$ is the transpose of $M$.
	Denote $m(\cd)$ the Lebesgue measure.
	For any given $t\in [0, T]$, $p, q \geq 1$, and Euclidean space $\dbH$, we introduce the following spaces:
	\begin{align*}
		&L^{2}_{\sF_{T}}(\Om; \dbH)
		= \Big\{\xi:\Om\to\dbH\Bigm|\xi ~\hb{is $\sF_{T}$-measurable and~}  \|\xi \|_2 \triangleq (\dbE[\left | \xi \right |^2] )^\frac{1}{2} \textless \infty \Big\}, \\
		&L^\i_{\sF_{T}}(\Om; \dbH)
		= \Big\{\xi :\Om\to\dbH\Bigm|\xi ~\hb{is $\sF_{T}$-measurable and~}  \|\xi \|_{\i} \triangleq \mathop{\esssup}\limits_{\omega \in \Om}|\xi (\omega)| \textless \infty \Big\} , \\
		&S^p_{\dbF}(t,T;\dbH)
		= \Big\{Y:\Om\times [t,T] \to\dbH\Bigm| Y~ \hb{is $\dbF$-progressively measurable, continuous, and~} \\
		& \quad \quad \quad \quad \quad \quad\quad\ \quad \quad \quad   \|Y\|_{S^p_{\dbF}(t,T)} \triangleq \Big(\dbE[\mathop{\sup}\limits_{s \in [t,T]} |Y_{s}|^p]\Big)^\frac{1}{p}<\i \Big\}, \\
		&L^\i_{\dbF}(t,T;\dbH)
		= \Big\{Y:\Om\times [t,T] \to\dbH\Bigm| Y~ \hb{is $\dbF$-progressively measurable, 
			and~} \\
		& \quad \quad \quad \quad \quad \quad\quad\ \quad \quad \quad   \|Y\|_{L^\i_{\dbF}(t,T)} \triangleq \mathop{\esssup}\limits_{(s,\omega) \in [t,T]\times\Om} |Y_{s}(w)| 
		<\i \Big\},\\
		&L^p_{\dbF}(t,T;\dbH)
		= \Big\{Y:\Om\times [t,T] \to\dbH\Bigm| Y~ \hb{is $\dbF$-progressively measurable, 
			and~} \\
		& \quad \quad \quad \quad \quad \quad\quad\ \quad \quad \quad   \|Y\|_{L^p_{\dbF}(t,T)} \triangleq \bigg(\dbE\bigg[\int_{t}^{T}|Y_s|^pds\bigg]\bigg)^\frac{1}{p}<\i \Big\},\\
		&	\mathcal{M}_{\dbF}^{p, q}(t,T; \dbH)
		=\bigg\{Y : \Om\times [t,T] \to\dbH \Bigm| Y~ \hb{is $\dbF$-progressively measurable and~} \\
		& \quad \quad \quad \quad \quad \quad\quad\ \quad \quad \quad   \|Y\|_{\mathcal{M}_{\dbF}^{p,q}(t,T)} \triangleq \bigg(\dbE\bigg[\Big(\int_{t}^{T}|Y_s|^pds\Big)^{\frac{q}{p}}\bigg]\bigg)^\frac{1}{q}<\i \bigg\}, \\
		%
		%
		&C_{\dbF}\big([t, T]; L^p(\Om; \dbH)\big)
		=\bigg\{\phi : \hb{$\Om\times [t,T] \to\dbH \Bigm| t \mapsto \phi(\cd,t)$ is continuous}, \\
		&\quad \quad \quad \quad \quad \quad\hb{$\phi$  is $\dbF$-progressively measurable, and } \mathop{\sup}\limits_{s \in [t,T]} \Big(\dbE\big[|\phi(s)|^p\big]\Big)^{\frac{1}{p}}<\i \bigg\}, \\
		&L^2\big(0, T ; L^2_{\dbF}(0, T; \dbH)\big)
		=\bigg\{\phi : \Om\times [0,T]^2 \to\dbH \Bigm| \hb{for almost all $t\in [0, T], \phi(t,\cd) \in \mathcal{M}^{2,2}_{\dbF}(0, T; \dbH)$, and} 
		\\
		&\quad \quad \quad \quad \quad \quad\quad\ \quad \quad \quad
		\dbE \Big[\int_{0}^{T}\int_{0}^{T} |\phi(t,s)|^2dsdt\Big]<\i \bigg\}.
	\end{align*}
	Denote by $\mathcal{E}_t(M)$ the Dol\'{e}ans-Dade exponential of a continuous local martingale $M$, that is, $\mathcal{E}(M) \triangleq \exp \{M_t - \frac{1}{2} \langle M\rangle_t\}$ for any $t\in [0, T]$.

	\ms

	The following result is the $L^\beta, \beta>1$ estimate for BSDE \eqref{l}, which can be found in Briand--Delyon--Hu--Pardoux--Stoica \cite[Theorem 4.2]{briand2003lp}.
	
	\begin{proposition}\label{prop3.1}
		Assume that for $\beta >1$, $\big(\xi, g_1(s,0,0)\big) \in L^{\beta}_{\sF_{T}}(\Om;\dbR^m)\times \mathcal{M}_{\dbF}^{1, \beta}(0,T; \dbR^m)$ and $g_1 = g_1(t, \omega, \tilde{y}, \tilde{z}): [0, T] \times \Om \times \dbR^m \times \dbR^{m\times d} \rightarrow \dbR^m $ is progressively measurable for each fixed $(\tilde{y}, \tilde{z})$ and Lipschitz in $(\tilde{y}, \tilde{z})$. 
		Then, the following BSDE:
		\begin{align}\label{l}
			\tilde{y}(t) = \xi + \int_{t}^{T}g_1(s, \tilde{y}(s), \tilde{z}(s))ds - \int_{t}^{T}\tilde{z}(s)^{\top}dW(s), \q t \in [0, T]
		\end{align}
		has a unique solution $(\tilde{y}, \tilde{z}) \in  S^{\beta}_{\dbF}(0,T;\dbR^m)\times \mathcal{M}_{\dbF}^{2, \beta}(0,T; \dbR^{ m\times d})$, for $\beta > 1$. Moreover, for $\beta > 1$, we have 
		\begin{align} \label{es1}
			\dbE\bigg[\mathop{\sup}\limits_{t\in [0, T]}|\tilde{y}(t)|^{\beta} + \Big(\int_{0}^{T}|\tilde{z}(t)|^2dt\Big)^{\frac{\beta}{2}}\bigg] 
			\leq K \dbE\Big[|\xi|^{\beta} + \Big(\int_{0}^{T}|g_1(t, 0, 0)|dt\Big)^{\beta}\Big],
		\end{align}
		where the constant $K$ depends on $m, d,  \beta,  T$ and the Lipschitz constant.
	\end{proposition}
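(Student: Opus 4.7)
The plan is to follow the standard $L^\beta$-theory for BSDEs developed in \cite{briand2003lp}. Existence and uniqueness in $S^\beta_{\dbF}(0,T;\dbR^m)\times\mathcal{M}_{\dbF}^{2,\beta}(0,T;\dbR^{m\times d})$ will be deduced once the a priori estimate \eqref{es1} is in hand: one freezes $(\tilde y,\tilde z)$ inside $g_1$, solves the resulting affine BSDE via the martingale representation theorem, and runs a Picard contraction in a $\beta$-weighted norm on $[0,T]$ (applying the same a priori bound to the difference of two iterates). I will therefore focus the plan on the a priori estimate, which is the technical core of the proposition.

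The main device is It\^o's formula applied to the regularized power $u_\e(y)=(|y|^2+\e)^{\beta/2}$ for $\e>0$, whose gradient and Hessian are smooth on all of $\dbR^m$. A direct computation yields
\begin{align*}
du_\e(\tilde y(t)) = -\beta(|\tilde y(t)|^2+\e)^{\frac{\beta-2}{2}}\tilde y(t)^\top g_1\big(t,\tilde y(t),\tilde z(t)\big)\,dt + \tfrac{1}{2}\mathrm{tr}\big(D^2 u_\e(\tilde y(t))\,\tilde z(t)^\top\tilde z(t)\big)\,dt + dM_t,
\end{align*}
where $M$ is a local martingale and the Hessian term is non-negative with leading coefficient $\beta(\beta-1)$. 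Combining the Lipschitz bound $|g_1(t,y,z)|\le|g_1(t,0,0)|+L(|y|+|z|)$ with Young's inequality, one absorbs the $|\tilde z|^2$ contribution coming from the generator into the natural quadratic form produced by $D^2 u_\e$, arriving at a differential inequality of the shape
\begin{align*}
du_\e(\tilde y(t)) \ge -C\big[u_\e(\tilde y(t))+|g_1(t,0,0)|^\beta\big]dt + dM_t.
\end{align*}
Taking expectation, applying Gronwall, and then combining with a BDG estimate for the martingale part controls $\dbE\big[\sup_t u_\e(\tilde y(t))\big]$; letting $\e\downarrow 0$ by dominated convergence yields the bound on $\dbE[\sup_t|\tilde y(t)|^\beta]$, and the $\big(\int_0^T|\tilde z|^2dt\big)^{\beta/2}$ piece of \eqref{es1} follows by applying It\^o's formula to $|\tilde y|^2$, using the already established sup-bound on $\tilde y$, and invoking BDG one more time.

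The main obstacle is the sub-quadratic regime $1<\beta<2$. In this range $|y|^\beta$ is not $C^2$ at the origin, so the regularization by $\e$ is essential and cannot be dispensed with until the last step; moreover $\beta(\beta-1)<1$ and $D^2 u_\e$ dominates only the weighted quadratic form $(|y|^2+\e)^{(\beta-2)/2}|y|^2|z|^2/(|y|^2+\e)$ rather than $(|y|^2+\e)^{(\beta-2)/2}|z|^2$. Closing the Young absorption while keeping all constants uniform in $\e$ then requires the delicate splitting of $\{|\tilde y|\ge\sqrt{\e}\}$ versus $\{|\tilde y|<\sqrt{\e}\}$ performed in \cite{briand2003lp}. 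Once this technical point is settled, the Picard iteration, the passage $\e\downarrow 0$, and the transition from the $\tilde y$-bound to the $\tilde z$-bound all go through exactly as in the quadratic case $\beta\ge 2$.
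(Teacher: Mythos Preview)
The paper does not prove this proposition; it simply cites \cite[Theorem~4.2]{briand2003lp} and moves on. Your sketch is the right strategy and is essentially the argument in that reference, so in spirit you and the paper agree.

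One technical point in your outline is off and would not yield the stated estimate. In your displayed differential inequality you have replaced the cross term $\beta(|\tilde y|^2+\e)^{(\beta-2)/2}|\tilde y|\,|g_1(t,0,0)|$ by $C|g_1(t,0,0)|^\beta$ via a pointwise Young inequality. Integrating this produces a bound in terms of $\dbE\int_0^T|g_1(t,0,0)|^\beta\,dt$, which is \emph{not} controlled by the hypothesis $g_1(\cdot,0,0)\in\mathcal{M}_{\dbF}^{1,\beta}(0,T;\dbR^m)$ and is strictly stronger than the right-hand side $\dbE\big[(\int_0^T|g_1(t,0,0)|\,dt)^\beta\big]$ of \eqref{es1}. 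The correct manoeuvre, as in \cite{briand2003lp}, is to keep the term $|\tilde y(s)|^{\beta-1}|g_1(s,0,0)|$ under the integral, take the supremum in $t$ first, pull out $\sup_s|\tilde y(s)|^{\beta-1}$, and only then apply Young's inequality with exponents $\beta/(\beta-1)$ and $\beta$:
\[
\sup_{s}|\tilde y(s)|^{\beta-1}\int_0^T|g_1(s,0,0)|\,ds \;\le\; \tfrac12\,\sup_{s}|\tilde y(s)|^{\beta} + C\Big(\int_0^T|g_1(s,0,0)|\,ds\Big)^{\beta}.
\]
With this correction the rest of your plan (BDG for the martingale term, Gronwall, the $\e\downarrow0$ passage, and the recovery of the $\tilde z$-bound from It\^o on $|\tilde y|^2$) goes through and matches the cited result.
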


	Next, we consider the following BSVIE:
	\begin{align}\label{BSVIE}
		y'(t) = \psi(t) + \int_{t}^{T} g_2(t, s, y'(s), z'(t, s), z'(s,t))ds - \int_{t}^{T} z'(t, s)dW(s), \q t\in [0, T],
	\end{align}
	where $g_2$ is the given function satisfying the following conditions:
	\begin{itemize}
		\item [$\rm(i)$] $g_2$ is $\mathcal{B}([0, T]^2 \times \dbR^m \times \dbR^{m\times d} \times \dbR^{m\times d}) \otimes \sF_T$-measurable, $s \longmapsto g_2(t, s, y', z', \zeta')$ is progressively measurable for all $(t, y', z', \zeta') \in [0, T] \times \dbR^m \times \dbR^{m\times d} \times \dbR^{m\times d}$,
		and
		\begin{align*}
			\dbE\int_{0}^{T}\Big(\int_{t}^{T}|g_2(t,s,0,0,0)|ds\Big)^2dt < \i.
		\end{align*}
		\item [$\rm(ii)$] 
		for any $0 \leq t \leq s \leq T$, $y', \bar{y}' \in \dbR^m$, and $z', \bar{z}', \zeta', \bar{\zeta}' \in \dbR^{m\times d}$,
		\begin{align*}
			|g_2(t,s, y', z', \zeta') - g_2(t,s, \bar{y}', \bar{z}', \bar{\zeta}')| \leq K \big(|y'-\bar{y}'| + |z'-\bar{z}'| + |\zeta'-\bar{\zeta}'|\big).
		\end{align*}
	\end{itemize}
	The following result is the well-posedness result about BSVIE \eqref{BSVIE}, which can be found in Yong \cite[Theorem 3.7]{Yong2008}.
	\begin{proposition}\label{BSVIE-unique}
		For any $\mathcal{B}([0, T]) \otimes \sF_T$-measurable process $\psi $ satisfying $\dbE\big[\int_{0}^{T}|\psi(t)|^2dt\big] < \i$, the BSVIE \eqref{BSVIE} has a unique solution $(y'(\cd), z'(\cd, \cd)) \in L^{2}_{\dbF}(0, T;\dbR^m) \times L^2\big(0, T ; L^2_{\dbF}(0, T; \dbR^{m\times d})\big)$ satisfying 
		\begin{align}\label{martingale condition}
			y'(t) = \dbE_s[y'(t)] + \int_{s}^{t}z'(t,r)dW(r), \q a.e.\q t\in [s, T], \q s\in [0, T).
		\end{align}
		In addition,  we have 
		\begin{align}
			\dbE\bigg\{\int_{0}^{T}|y'(t)|^2dt + \int_{0}^{T}\int_{0}^{T}|z'(t, s)|^2dsdt\bigg\} \leq
			K\dbE\bigg\{\int_{0}^{T}|\psi(t)|^2dt + \int_{0}^{T}\Big(\int_{t}^{T}|g_2(t, s, 0,0,0)|ds\Big)^{2}dt\bigg\}.\label{Y}
		\end{align}
	\end{proposition}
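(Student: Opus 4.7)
The plan is to establish existence, uniqueness, the martingale condition~\eqref{martingale condition}, and the a priori bound~\eqref{Y} simultaneously through a Banach fixed-point argument on a short terminal interval, and then iterate backwards to cover $[0,T]$. This is the standard strategy for M-solution (type-II) BSVIEs. The presence of the swapped-argument term $z'(s,t)$ in the driver is exactly what rules out a fiber-wise-in-$t$ BSDE reduction and forces the argument to be carried out in the $L^2$ space of pairs $(y'(\cdot),z'(\cdot,\cdot))$ rather than one $t$ at a time.

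First I would fix $h>0$ (to be chosen) and introduce the Banach space $\mathcal{X}_h:=L^{2}_{\dbF}(T-h,T;\dbR^m)\times L^2\big(T-h,T;L^{2}_{\dbF}(T-h,T;\dbR^{m\times d})\big)$. Given an input $(\bar y',\bar z')\in\mathcal{X}_h$, for each $t\in[T-h,T]$ define the $\sF_T$-measurable random variable
\[
\eta(t)\;:=\;\psi(t)+\int_t^T g_2\big(t,s,\bar y'(s),\bar z'(t,s),\bar z'(s,t)\big)\,ds,
\]
and set $y'(t):=\dbE_t[\eta(t)]$. Applying Brownian martingale representation to the martingale $s\mapsto\dbE_s[\eta(t)]$ yields a kernel $z'(t,\cdot)$ with $\eta(t)=y'(t)+\int_t^T z'(t,s)\,dW(s)$; by construction this representation immediately delivers~\eqref{martingale condition}. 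The resulting map $\Phi:(\bar y',\bar z')\mapsto(y',z')$ is well-defined on $\mathcal{X}_h$ by the integrability of $\psi$ and $g_2(\cdot,\cdot,0,0,0)$ together with Lipschitz continuity.

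For contraction I would pass to the difference $(\Delta\bar y',\Delta\bar z')$ of two inputs with outputs $(\Delta y',\Delta z')$. Because $y'(t)$ is $\sF_t$-measurable and orthogonal to the stochastic integral, one gets the identity
\[
\dbE|\Delta y'(t)|^2+\dbE\int_t^T|\Delta z'(t,s)|^2\,ds\;=\;\dbE\Big|\int_t^T\Delta g_2(t,s)\,ds\Big|^2,
\]
after which Cauchy--Schwarz, the Lipschitz bound on $g_2$, integration in $t$, and the Fubini identity
\[
\int_{T-h}^{T}\!\!\int_t^T|\Delta\bar z'(s,t)|^2\,ds\,dt\;=\;\int_{T-h}^{T}\!\!\int_{T-h}^{s}|\Delta\bar z'(s,t)|^2\,dt\,ds
\]
produce a bound of the form $C(h)\,\|(\Delta\bar y',\Delta\bar z')\|_{\mathcal{X}_h}^2$ with $C(h)\to 0$ as $h\downarrow 0$. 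Fixing $h$ so that $C(h)<1$ gives a strict contraction and hence a unique fixed point on $[T-h,T]$. To reach $[0,T]$ I would iterate backwards: on $[T-2h,T-h]$ the piece $\bar z'(s,t)$ with $s\in[T-h,T]$ is already determined by the first step and enters only as inhomogeneous data, so the same contraction applies verbatim. Finitely many such steps give global existence and uniqueness, and running the orthogonality identity on the solution itself (rather than on a difference), combined with a stepwise Gronwall argument, produces the quantitative estimate~\eqref{Y}.

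The main obstacle is exactly the diagonal dependence $z'(s,t)$ in the driver: it couples different $t$-slices of the unknown $z'$ and forces the natural contraction constant to scale with the length of the interval, so no single application of Banach on all of $[0,T]$ can work. The Fubini exchange above together with the short-interval/stepwise scheme are what tame this coupling; the martingale condition~\eqref{martingale condition} is not an additional burden because it is automatic from the construction via martingale representation at each fixed $t$.
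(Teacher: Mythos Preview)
The paper does not supply its own proof of this proposition; it is quoted verbatim as a preliminary result from Yong~\cite[Theorem~3.7]{Yong2008}. Your sketch is precisely the $M$-solution argument carried out in that reference (contraction on a short terminal subinterval via martingale representation, with the Fubini swap handling the diagonal term $z'(s,t)$, followed by backward iteration), so your approach and the cited source coincide.
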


	Finally, we present the Lebesgue differential theorem, which can be found in Stein--Shakarchi \cite[page 104]{stein2009real}.
	\begin{proposition}\label{Lebes}
		If $l: [0, T] \rightarrow \dbR$ is integrable on $[0, T]$, then we have 
		\begin{align*}
			\lim\limits_{\e \rightarrow 0 } \frac{1}{\e}\int_{t'}^{t'+\e}l(t)dt = l(t'), \q \hb{for a.e. $t'$}.
		\end{align*}
	\end{proposition}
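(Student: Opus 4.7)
The plan is to follow the classical two-step argument --- a weak-type maximal inequality combined with density of continuous functions --- adapted to one-sided averages on the interval $[0,T]$.

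First, I would introduce the one-sided Hardy--Littlewood maximal operator
\begin{align*}
	(M^{+}l)(t') \triangleq \sup_{0 < \e \leq T - t'} \frac{1}{\e}\int_{t'}^{t'+\e} |l(t)|\,dt,
\end{align*}
and establish the weak-type $(1,1)$ inequality
\begin{align*}
	m\{t' \in [0,T) : (M^{+}l)(t') > \lambda\} \leq \frac{C}{\lambda}\|l\|_{L^1([0,T])}, \q \lambda > 0,
\end{align*}
for some absolute constant $C$. This would be carried out via a Vitali-type covering argument: each point of the superlevel set is the left endpoint of some interval $[t', t'+\e] \subset [0,T]$ on which the average of $|l|$ exceeds $\lambda$, and from this family one extracts a countable disjoint subcollection whose total length is comparable to the outer measure of the superlevel set.

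Second, I would exploit the density of $C([0,T])$ in $L^1([0,T])$: for any $\eta > 0$, pick $g \in C([0,T])$ with $\|l - g\|_{L^1([0,T])} < \eta$. For continuous $g$ the conclusion $\e^{-1}\int_{t'}^{t'+\e} g(t)\,dt \to g(t')$ holds everywhere on $[0,T)$ by uniform continuity on the compact interval. Writing $h \triangleq l - g$ and setting
\begin{align*}
	E_\alpha \triangleq \Big\{t' \in [0,T) : \limsup_{\e \to 0^{+}}\Big|\frac{1}{\e}\int_{t'}^{t'+\e} l(t)\,dt - l(t')\Big| > \alpha\Big\},
\end{align*}
one checks $E_\alpha \subset \{(M^{+}h)(t') > \alpha/2\} \cup \{|h(t')| > \alpha/2\}$; hence the weak-type estimate of Step 1 together with Chebyshev's inequality gives $m(E_\alpha) \leq (2C+2)\eta/\alpha$. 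Sending $\eta \to 0$ yields $m(E_\alpha) = 0$ for every $\alpha > 0$, and taking a countable intersection along $\alpha \to 0$ delivers the almost-everywhere conclusion.

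The main technical obstacle is the weak-type maximal inequality in Step 1: the selection of disjoint half-open intervals in the Vitali-type argument has to be done with care so as not to lose too much mass from the superlevel set. Once this estimate is in place, Step 2 is a routine density-plus-Chebyshev argument, and the result follows exactly as in \cite{stein2009real}.
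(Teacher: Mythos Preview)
Your proposal is correct and follows the classical maximal-function argument. The paper does not actually prove this proposition; it simply cites Stein--Shakarchi \cite[page 104]{stein2009real}, and your two-step outline (weak-type $(1,1)$ for the one-sided maximal operator via a Vitali covering, then density of $C([0,T])$ in $L^1$) is precisely the proof given in that reference.
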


\section{Stochastic maximum principle for delayed forward-backward stochastic control systems}\label{Section3}
In this section, we derive the maximum principle for problem (O). And for simplicity of presentation, the constant $K$ will change from line to line in the rest of proof.
\subsection{The variational equations of $x^\e - x^*$}
In this subsection, we present the variational equations of the SDDE in \eqref{system}, derive the corresponding estimates, and subsequently apply a series of transformations inspired by Meng--Shi--Wang--Zhang \cite{meng2025general}.
Denote 
\begin{align*}
x_{\delta}(t) \triangleq x(t-\delta), 
\q \tilde{x}(t) \triangleq \int_{-\delta}^{0} e^{\kappa \theta}x(t+\theta)d\theta, 
\q \mu(t) \triangleq u(t-\delta).
\end{align*}
Then the control system \eqref{2.1} can be rewritten in a more concise form as follows:
\begin{equation}\label{system}\left\{\begin{aligned}
	&dx(t) = b\big(t, x(t), x_\delta(t), \tilde{x}(t), u(t), \mu(t)\big)dt
	+ \sigma(t, x(t), x_\delta(t), \tilde{x}(t), u(t), \mu(t))dW(t),\q t\in [0,T];\\
	&dy(t) = -f\big(t, x(t), x_\delta(t), \tilde{x}(t), y(t), z(t),  
	u(t), \mu(t)\big)dt +  z(t) dW(t),  \quad t\in [0,T]; \\
	&x(t) =\xi(t), \q t\in [-\delta, 0]; 
	\q u(t) = \gamma(t), \q t \in [-\delta, 0];\\
	&y(T) = h\big(x(T), x_\delta(T), \tilde{x}(T)\big).
\end{aligned}\right.\end{equation}

Now we present the assumption of \eqref{system}.
\begin{assumption}\rm\label{A1}
\begin{itemize}
	\item [$\rm(i)$] $\xi$ is a deterministic continuous function, $\mathop{\sup}\limits_{t\in [-\delta, 0]}|\gamma(t)|^\beta < \i$,
	$h$ is twice continuously differentiable in $(x, x_\delta, \tilde{x})$, and  $\partial^2h$ is bounded.
	\item [$\rm(ii)$]$b, \sigma$ are twice continuously differentiable in $(x, x_\delta, \tilde{x})$; 
	$\partial b, \partial \sigma, \partial^2b, \partial^2\sigma$ are bounded;
	there exists a constant $L_1>0$ such that 
	\begin{align}\label{b}
		|b(t, 0, 0, 0, u, \mu)| + |\sigma(t, 0, 0, 0, u, \mu)| \leq L_1(1+ |u|+|\mu|).
	\end{align}
	\item [$\rm(iii)$] $f$ is twice continuously differentiable in $(x, x_\delta, \tilde{x}, y, z)$; $\partial f, \partial^2f$ are bounded; there exists a constant $ L_2>0$ such that 
	\begin{align*}
		|f(t, 0, 0, 0, 0, 0, u, \mu)| \leq L_2(1+|u|+|\mu|),
		%
		%
		%
		%
	\end{align*}
	%
\end{itemize}
\noindent where $\partial h, \partial b, \partial \sigma$ and $\partial^2h, \partial^2 b, \partial^2 \sigma$ are the gradients and Hessian matrices of $h, b, \sigma$ with respect to $(x,x_\delta, \tilde{x})$, respectively; $\partial f, \partial^2f$ are the gradients and the Hessian matrices of $f$ with respect to $(x,x_\delta, \tilde{x}, y, z)$, respectively.
\end{assumption}
%
The following is the well-posedness result of \eqref{system} and the detailed proof can be found in the Appendix.
\begin{proposition}\label{thm2.1}
Under \autoref{A1}, for any $u(\cdot) \in \mathcal{U}_{ad}$ and $\beta>1$, the equation \eqref{system} has a unique solution $(x, y, z) \in S^\beta_{\dbF}(-\delta,T;\dbR^{n}) \times S^\beta_{\dbF}(0,T;\dbR) \times \mathcal{M}_{\dbF}^{2, \beta}(0,T; \dbR^{d})$.
Moreover, the following estimate holds:

$\|x\|^{\beta}_{S^\beta_{\dbF}(-\delta,T;\dbR^{n})} 
\leq 
K \Big\{\dbE\Big[\|\xi\|_{S^\beta_{\dbF}(-\delta,0;\dbR^{n})}^\beta + \Big(\displaystyle\int_{0}^{T}|b(s, 0, 0, 0,  u(s), \mu(s))|ds\Big)^{\beta}+ \Big(\displaystyle\int_{0}^{T}|\sigma(s, 0, 0, 0, u(s), \mu(s))|^2ds\Big)^{\frac{\beta}{2}}\Big] 
\Big\}$.
\end{proposition}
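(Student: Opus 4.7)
The plan is to exploit that the SDDE for $x$ in \eqref{system} is decoupled from $(y,z)$, and to handle the forward and backward parts in sequence. For the forward equation, the natural approach is the classical method of steps: on $[0,\delta]$ the delay $x_\delta(t)=\xi(t-\delta)$ is a known deterministic continuous function, and the weighted integral $\tilde{x}(t)=\int_{-\delta}^{0}e^{\kappa\theta}x(t+\theta)d\theta$ splits into a piece depending only on $\xi$ and a Volterra-type term $\int_{0}^{t}e^{\kappa(r-t)}x(r)dr$. Combined with the Lipschitz continuity and the linear growth assumptions of $b,\sigma$ from \autoref{A1}, a standard contraction mapping argument on $S^{\beta}_{\dbF}(0,\delta;\dbR^{n})$ yields a unique solution on $[0,\delta]$; iterating on $[k\delta,(k+1)\delta]$, with $x_\delta$ now known from the previous interval, extends this to all of $[0,T]$. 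Continuity at the step boundaries is automatic from the construction.

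For the $S^{\beta}$-estimate I would apply Burkholder--Davis--Gundy to $\dbE[\sup_{s\le t}|x(s)|^{\beta}]$, use \rf{b} together with the boundedness of $\partial b,\partial\sigma$ to isolate the terms involving $b(s,0,0,0,u,\mu)$ and $\sigma(s,0,0,0,u,\mu)$, and absorb the delay contributions via bounds of the form
\[
\int_{0}^{t}\dbE\Big[\sup_{r\le s}|x(r-\delta)|^{\beta}\Big]ds \;\leq\; \delta\,\|\xi\|^{\beta}_{S^{\beta}_{\dbF}(-\delta,0;\dbR^n)}+\int_{0}^{t}\dbE\Big[\sup_{r\le s}|x(r)|^{\beta}\Big]ds,
\]
with a Jensen/Fubini estimate for the $\tilde{x}$-term. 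Gronwall's inequality then produces precisely the bound claimed in the proposition.

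Once $x(\cdot)\in S^{\beta}_{\dbF}(-\delta,T;\dbR^{n})$ is in hand, the driver
\[
f\bigl(t,x(t),x_\delta(t),\tilde{x}(t),y,z,u(t),\mu(t)\bigr)
\]
is Lipschitz in $(y,z)$ with constant coming from the bound on $\partial f$, and evaluated at $(y,z)=(0,0)$ it lies in $\mathcal{M}^{1,\beta}_{\dbF}(0,T;\dbR)$ thanks to the growth hypothesis on $f$ in \autoref{A1}(iii), the moment bound on $u(\cdot)$ built into $\mathcal{U}_{ad}$, and the $x$-estimate. The terminal value $h(x(T),x_\delta(T),\tilde{x}(T))$ lies in $L^{\beta}_{\sF_T}(\Om;\dbR)$ by smoothness and linear growth of $h$ together with the same $x$-bound. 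Applying \autoref{prop3.1} directly then yields a unique $(y,z)\in S^{\beta}_{\dbF}(0,T;\dbR)\times\mathcal{M}^{2,\beta}_{\dbF}(0,T;\dbR^{d})$.

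The main technical hurdle I anticipate is the Gronwall step in the SDDE estimate: the delay shift $x_\delta$ and the weighted integral $\tilde{x}$ both couple the solution to its whole past over an interval of length $\delta$, and some care is needed to ensure that the constant $K$ in the stated bound depends only on the structural parameters ($n,d,\beta,T,\delta,\kappa$, and the various Lipschitz/boundedness constants from \autoref{A1}) and not on the particular control $u(\cdot)$. Apart from this bookkeeping, the remainder of the argument should be routine once the method-of-steps framework is in place.
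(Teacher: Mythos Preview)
Your proposal is correct and would establish the proposition, but the route differs from the paper's. The paper does \emph{not} use the method of steps: it makes no use of the fact that $x_\delta(t)$ is known on $[0,\delta]$. Instead it treats the full SDDE directly and sets up a contraction on $S^\beta_{\dbF}(-\delta,\e_0;\dbR^n)$, where $\e_0>0$ is chosen small enough (depending only on $\beta$ and the bound $k_1$ on $\partial b,\partial\sigma$, not on $\delta$) that the map
\[
\Pi_x(t)=\xi(0)+\int_0^t b\bigl(s,x(s),x_\delta(s),\tilde x(s),u(s),\mu(s)\bigr)\,ds+\int_0^t\sigma(\cdots)\,dW(s)
\]
is a strict contraction. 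The local solutions are then patched forward in increments of $\e_0$, and the global $S^\beta$-estimate is obtained by iterating the local bound from interval to interval rather than by a Gronwall argument. For the backward equation both you and the paper simply invoke \autoref{prop3.1}.

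Your method-of-steps approach exploits the delay structure, but note that it does not fully decouple the problem: because of the $\tilde{x}$ term, each step $[k\delta,(k+1)\delta]$ still carries a Volterra-type dependence on the unknown part of $x$, so you do not reduce to a pure SDE. In particular, the contraction on the whole of $[0,\delta]$ is not automatic when $\delta$ is large, and you would still need a small-time subdivision (or a weighted norm) within each step---at which point the two arguments essentially coincide. The paper's version buys uniformity (one contraction, no case distinction on $t\lessgtr\delta$), while yours makes the role of the initial segment $\xi$ more transparent and feeds more naturally into the Gronwall-type estimate you describe.
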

%

%

Now we analyze the variational equations of SDDE in \eqref{system}, which serve as critical tools for deriving the stochastic maximum principle.

Let $u^*(\cdot)$ be optimal control and $(x^*(\cdot),y^*(\cdot),z^*(\cdot))$ be the corresponding state trajectories of \eqref{system}. 
Notice that the control domain $U$ is not necessarily convex, we will use the spike variation method. Let $0<\e < \delta$, for any $u(\cdot) \in \mathcal{U}_{ad}$ and any fixed $t_0 \in [0, T)$, define $u^\e(\cd)$ as follows:
\begin{equation}\label{}u^\e(t)\triangleq \left\{\begin{aligned}
		&u^*(t), \q t  \notin [t_0, t_0 + \e];\\
		&u(t), \q t \in [t_0, t_0+\e],
	\end{aligned}\right.\end{equation}
which is a perturbed admissible control of the form.
In addition, we let $(x^\e(\cdot), y^\e(\cdot), z^\e(\cdot))$ denote the state trajectories of \eqref{system} associated with $u^\e (\cdot)$. 
For simplicity,
for $\phi =h, f, b, \sigma^i, i=1,..., d $ and $\omega = x, x_\delta, \tilde{x},y, z, xx, xx_\delta, x\tilde{x}, xy, xz, x_\delta x_\delta ,x_\delta \tilde{x}, x_\delta y, x_\delta z, \tilde{x}\tilde{x}, \tilde{x}y, \tilde{x}z, yy, yz,zz$,  we denote
\begin{equation}
\begin{aligned}\label{notation}
\phi(t) &\triangleq \phi(t, x^*(t), x^*_\delta(t), \tilde{x}^*(t), y^*(t), z^*(t), u^*(t), \mu^*(t)), \\
\phi_{\omega}(t) &\triangleq \phi_{\omega}(t, x^*(t), x^*_\delta(t), \tilde{x}^*(t), y^*(t), z^*(t), u^*(t), \mu^*(t)),\\
\Delta \phi (t) &\triangleq \phi(t, x^*(t), x^*_\delta(t), \tilde{x}^*(t), y^*(t), z^*(t),u^\e(t), \mu^\e(t)) - \phi(t) \\
& = [\phi(t, x^*(t), x^*_\delta(t), \tilde{x}^*(t), y^*(t), z^*(t),u(t), \mu^*(t)) - \phi(t)]1_{[t_0, t_0+\e]}(t) \\
&\q+ [\phi(t, x^*(t), x^*_\delta(t), \tilde{x}^*(t), y^*(t), z^*(t),u^*(t), \mu(t)) - \phi(t)]1_{[t_0+\delta, t_0+\delta+\e]}(t), \\
\Delta \phi_\omega (t)& \triangleq \phi_\omega(t, x^*(t), x^*_\delta(t), \tilde{x}^*(t), y^*(t), z^*(t), u^\e(t), \mu^\e(t)) - \phi_\omega(t)\\
& = [\phi_\omega(t, x^*(t), x^*_\delta(t), \tilde{x}^*(t), y^*(t), z^*(t),u(t), \mu^*(t)) - \phi_\omega(t)]1_{[t_0, t_0+\e]}(t) \\
&\q+ [\phi_\omega(t, x^*(t), x^*_\delta(t), \tilde{x}^*(t), y^*(t), z^*(t),u^*(t), \mu(t)) - \phi_\omega(t)]1_{[t_0+\delta, t_0+\delta+\e]}(t),
\end{aligned}\end{equation}
and denote by $\partial^2b$ and $ \partial^2 \sigma$ the Hessian matrices of $b$ and $\sigma$ with respect to $(x, x_{\delta}, \tilde{x})$, respectively.

Consider the following first-order and second-order variational equations for the SDDE in \eqref{system}:
\begin{equation}\label{3.1}\left\{\begin{aligned}
dx_1(t) =& \Big[b_{x}(t)x_1(t) + b_{x_{\delta}}(t)x_{\delta,1}(t)+ b_{\tilde{x}}(t) \tilde{x}_1 (t) + \Delta b(t)\Big] dt \\
&+ \sum_{i=1}^{d}\Big[\sigma^i_{x}(t)x_1(t) + \sigma^i_{x_{\delta}}(t)x_{\delta,1}(t)+ \sigma^i_{\tilde{x}}(t) \tilde{x}_1 (t) + \Delta \sigma^i(t)\Big] dW^{i}(t),  \quad t\in [0,T]; \\
x_1(t) =0&, \q t\in [-\delta, 0]
\end{aligned}\right.\end{equation}
and
\begin{equation}\label{3..1}\left\{\begin{aligned}
dx_2(t) =& \Big[b_{x}(t)x_2(t) + b_{x_{\delta}}(t)x_{\delta,2}(t) + b_{\tilde{x}}(t)\tilde{x}_2(t) \\
&+ 
\frac{1}{2}\big(x_1(t), x_{\delta,1}(t), \tilde{x}_1(t)\big) \partial^2 b(t) \big(x_1(t), x_{\delta,1}(t), \tilde{x}_1(t)\big)^\top\Big]dt \\
&+ \sum_{i=1}^{d}\Big[\sigma^i_{x}(t)x_2(t) + \sigma^i_{x_{\delta}}(t)x_{\delta,2}(t)+ \sigma^i_{\tilde{x}}(t) \tilde{x}_2 (t)  \\
&+ \frac{1}{2}\big(x_1(t), x_{\delta,1}(t), \tilde{x}_1(t)\big) \partial^2 \sigma^i(t) \big(x_1(t), x_{\delta,1}(t), \tilde{x}_1(t)\big)^\top\\
&+\Delta\sigma^i_x(t)x_1(t) + \Delta\sigma^i_{x_\delta}(t)x_{\delta,1}(t) + \Delta \sigma^i_{\tilde{x}}(t)\tilde{x}_1(t)\Big] dW^{i}_t,  \quad t\in [0,T]; \\
x_2(t) =0&, \q t\in [-\delta, 0],
\end{aligned}\right.\end{equation}
where $x_{\delta, i}(t) \triangleq x_i(t-\delta), \tilde{x}_i(t) \triangleq \int_{-\delta}^{0} e^{\kappa \theta}x_i(t+\theta)d\theta, i=1,2$, for $t\in [0, T]$ and
$$x_1(t)\sigma^i_{xx}(t)x_1(t)^\top \triangleq \big(\tr \{\sigma^{1i}_{xx}(t)x_1(t)x_1(t)^\top\}, ..., \tr \{\sigma^{ni}_{xx}(t)x_1(t)x_1(t)^\top\}\big)^\top, \q \hb{for} \q i = 1, 2, ..., d,$$
and similarly for the other terms of $\big(x_1(t), x_{\delta,1}(t), \tilde{x}_1(t)\big) \partial^2 b(t) \big(x_1(t), x_{\delta,1}(t), \tilde{x}_1(t)\big)^\top$ and $\big(x_1(t), x_{\delta,1}(t), \tilde{x}_1(t)\big) \partial^2 \sigma^i(t) \big(x_1(t), x_{\delta,1}(t), \tilde{x}_1(t)\big)^\top$.

The following lemma gives the well-posedness of \eqref{3.1} and \eqref{3..1}, which can be found in Meng--Shi--Wang--Zhang \cite[Lemma 3.1]{meng2025general}.
\begin{lemma}\label{state}
Suppose that (i) and (ii) in \autoref{A1} hold. Then, for $\beta > 1$, \eqref{3.1} (resp.,\eqref{3..1} )  has a unique solution $x_1 \in S^{\beta}_{\dbF}(-\delta,T;\dbR^n)$ (resp., $x_2 \in S^{\beta}_{\dbF}(-\delta,T;\dbR^n)$). Moreover,  
\begin{align}
\dbE\Big[ \mathop{\sup}\limits_{t\in [-\delta, T]} |x^\e(t) - x^*(t)|^{\beta}\Big] = O(\e^\frac{\beta}{2}), \label{3.6}\\
\dbE\Big[\mathop{\sup}\limits_{t\in [-\delta, T]} |x_1(t)|^\beta\Big] = O(\e^{\frac{\beta}{2}}),\label{11}\\
\dbE\Big[\mathop{\sup}\limits_{t\in [-\delta, T]} |x^\e(t) - x^*(t) - x_1(t)|^\beta\Big] = O(\e^{\beta}),\\
\dbE\Big[\mathop{\sup}\limits_{t\in [-\delta, T]} |x_2(t)|^\beta\Big] = O(\e^{\beta}), \label{3.9}\\
\dbE\Big[\mathop{\sup}\limits_{t\in [-\delta, T]} |x^\e(t) - x^*(t) - x_1(t) - x_2(t)|^\beta\Big] = o(\e^{\beta}).\label{3.}
\end{align}
\end{lemma}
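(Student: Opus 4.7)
The plan is to prove well-posedness of \eqref{3.1} and \eqref{3..1} first, then to establish the five moment estimates in order by combining standard spike-variation arguments (BDG inequality and Gronwall) with successively higher-order Taylor expansions of $b$ and $\sigma$ around the optimal trajectory. Well-posedness I would obtain by a step method: on each subinterval $[k\delta,(k+1)\delta]\cap[0,T]$ the delayed process $x_{\delta,i}(t)=x_i(t-\delta)$ and the integrated process $\tilde{x}_i(t)$ are already determined from the previous step (or vanish on $[0,\delta]$ because $x_i\equiv0$ on $[-\delta,0]$), reducing the equation to a linear SDE in $x_i$ with bounded coefficients and $L^\beta$-integrable forcing under \autoref{A1}. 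Classical $L^\beta$-theory for linear SDEs then yields existence, uniqueness in $S^\beta_\dbF$, and moment bounds propagating across steps.

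For \eqref{3.6}, let $\Delta x\triangleq x^\e-x^*$ and write
\begin{align*}
b(t,x^\e,x_\delta^\e,\tilde{x}^\e,u^\e,\mu^\e)-b(t)=\big[b(t,x^\e,x_\delta^\e,\tilde{x}^\e,u^\e,\mu^\e)-b(t,x^*,x_\delta^*,\tilde{x}^*,u^\e,\mu^\e)\big]+\Delta b(t),
\end{align*}
and analogously for $\sigma$. The first bracket is Lipschitz in $(\Delta x,\Delta x_\delta,\Delta\tilde{x})$ with bounded constants by \autoref{A1}(ii), so BDG together with a delay-adapted Gronwall argument yield
\begin{align*}
\dbE\Big[\mathop{\sup}\limits_{[-\delta,T]}|\Delta x|^\beta\Big]\leq K\,\dbE\Big[\Big(\int_0^T|\Delta b|ds\Big)^\beta+\Big(\int_0^T|\Delta\sigma|^2ds\Big)^{\beta/2}\Big].
\end{align*}
Because $\Delta b$ and $\Delta\sigma$ are supported on $[t_0,t_0+\e]\cup[t_0+\delta,t_0+\delta+\e]$ and bounded in modulus by $K(1+|u|+|\mu|+|u^*|+|\mu^*|)$, whose moments of every order are finite by the definition of $\cU_{ad}$, Hölder's inequality delivers $O(\e^\beta)$ and $O(\e^{\beta/2})$ for the two integrals; the diffusion part dominates. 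Estimate \eqref{11} follows from the same computation applied directly to \eqref{3.1}, whose forcing is exactly $\Delta b$ and $\Delta\sigma$.

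Next, set $r_1\triangleq\Delta x-x_1$ and Taylor-expand the Lipschitz bracket above to first order around $(x^*,x_\delta^*,\tilde{x}^*)$. The residual drift takes the form $b_x(t)r_1+b_{x_\delta}(t)r_{1,\delta}+b_{\tilde{x}}(t)\tilde{r}_1+\Delta b_x(t)\Delta x+O(|\Delta x|^2)$, with the analogous expression for the diffusion. The cross term $\Delta b_x\,\Delta x$ is supported on an interval of length $O(\e)$ and $\Delta x$ is $O(\e^{1/2})$, so by Cauchy--Schwarz its $\beta$-th moment contribution is $O(\e^{3\beta/2})$, while $|\Delta x|^2$ contributes $O(\e^\beta)$; a Gronwall bound then yields $O(\e^\beta)$ for $r_1$, i.e.\ the third (unlabelled) estimate. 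Estimate \eqref{3.9} is immediate, since the sources of \eqref{3..1} are quadratic in $x_1$ (hence $O(\e)$ in $L^\beta$) plus the cross terms $\Delta\sigma_x x_1$ of order $O(\e^{3/2})$.

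Finally, for \eqref{3.}, set $r_2\triangleq\Delta x-x_1-x_2$ and perform a second-order Taylor expansion of $b^\e-b^*$ and $\sigma^\e-\sigma^*$ around $(x^*,x_\delta^*,\tilde{x}^*)$. Writing $X\triangleq(\Delta x,\Delta x_\delta,\Delta\tilde{x})$ and $X_1\triangleq(x_1,x_{\delta,1},\tilde{x}_1)$, the quadratic term $\tfrac12 X^\top\partial^2 b(t)X$ matches $\tfrac12 X_1^\top\partial^2 b(t)X_1$ up to $O(|X_1||X-X_1|+|X-X_1|^2)=O(\e^{3\beta/2})$ in $L^\beta$, by the third estimate; the third-order Taylor remainder $O(|X|^3)$ is also $O(\e^{3\beta/2})$. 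The only delicate remainder is $\tfrac12 X^\top[\partial^2 b(\xi)-\partial^2 b(t)]X$ coming from evaluating the Hessian at an intermediate point $\xi$; here the boundedness and continuity of $\partial^2 b,\partial^2\sigma$ from \autoref{A1}, combined with $\sup|\Delta x|\to0$ in probability and dominated convergence, force this term to be $o(\e^\beta)$ in $L^\beta$. A final Gronwall bound then transfers the estimate to $\dbE[\sup|r_2|^\beta]=o(\e^\beta)$. The main obstacle is precisely this last step: a naive Taylor expansion only delivers $O(\e^\beta)$ remainders, and the strict upgrade to $o(\e^\beta)$ requires the dominated-convergence argument on the Hessian differences together with the previously established moments \eqref{3.6}--\eqref{3.9}.
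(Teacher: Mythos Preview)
Your outline is correct and follows the standard spike-variation route. The paper does not prove this lemma itself but defers entirely to Meng--Shi--Wang--Zhang \cite[Lemma~3.1]{meng2025general}; your sketch---step-method well-posedness, then successive BDG/Gronwall estimates driven by Taylor expansions of increasing order, with dominated convergence on the Hessian remainder to upgrade $O(\e^\beta)$ to $o(\e^\beta)$ for \eqref{3.}---is exactly what such a proof requires.

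Two minor points worth tightening. First, your bound on $|\Delta b|,|\Delta\sigma|$ should also carry the state terms $|x^*|,|x_\delta^*|,|\tilde x^*|$, since \autoref{A1}(ii) only gives linear growth at the origin in the spatial variables; these have all moments by \autoref{thm2.1}, so the argument is unaffected. Second, in the $r_1$ estimate the diffusion cross term $\Delta\sigma_x^i\,\Delta x$ (not just its drift analogue) must be tracked: it contributes
\[
\dbE\Big[\Big(\int_{[t_0,t_0+\e]\cup[t_0+\delta,t_0+\delta+\e]}|\Delta x|^2\,ds\Big)^{\beta/2}\Big]\le K\e^{\beta/2}\,\dbE\big[\sup|\Delta x|^\beta\big]=O(\e^\beta),
\]
which is already the sharp order, whereas you quote only the $O(\e^{3\beta/2})$ drift contribution.
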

Now we give a transformation. Define
\begin{equation}
X_1(t) \triangleq 
\begin{bmatrix}
&x_1(t) \\
& x_{\delta, 1}(t)1_{(\delta, \i)}(t)\\
& \tilde{x}_1(t)
\end{bmatrix},\q\q
X_2(t) \triangleq
\begin{bmatrix}
&x_2(t)\\
&x_{\delta, 2}(t)1_{(\delta, \i)}(t)\\
& \tilde{x}_2(t)
\end{bmatrix},
\end{equation}
and for $i = 1,..., d$,
\begin{equation*}
A(t, s) \triangleq 
\begin{bmatrix}
b_x(s) & b_{x_\delta}(s) & b_{\tilde{x}}(s)\\
1_{(\delta, \i)}(t-s)b_x(s) & 1_{(\delta, \i)}(t-s)b_{x_\delta}(s) & 1_{(\delta, \i)}(t-s)b_{\tilde{x}}(s)\\
I_{n\times n} & -e^{-\kappa \delta}I_{n\times n} & -\kappa I_{n\times n}
\end{bmatrix},
\end{equation*}
\begin{equation*}
C^i(t, s) \triangleq 
\begin{bmatrix}
\sigma^i_x(s) & \sigma^i_{x_\delta}(s) & \sigma^i_{\tilde{x}}(s)\\
1_{(\delta, \i)}(t-s)\sigma^i_x(s) & 1_{(\delta, \i)}(t-s)\sigma^i_{x_\delta}(s) & 1_{(\delta, \i)}(t-s)\sigma^i_{\tilde{x}}(s)\\
0 & 0 & 0
\end{bmatrix},
\end{equation*}
\begin{equation*}
B(t, s) \triangleq 
\begin{bmatrix}
&\Delta b(s)\\
&1_{(\delta, \i)}(t-s)\Delta b(s)\\
&0
\end{bmatrix},\q\q
D^i(t, s) \triangleq 
\begin{bmatrix}
&\Delta \sigma^i(s)\\
&1_{(\delta, \i)}(t-s)\Delta \sigma^i(s)\\
&0
\end{bmatrix},
\end{equation*}
\begin{equation*}
\bar{B}(t, s) \triangleq 
\begin{bmatrix}
&\frac{1}{2}X_1(s)^{\top}\partial^2b(s)X_1(s)\\
&\frac{1}{2}1_{(\delta, \i)}(t-s)X_1(s)^{\top}\partial^2b(s)X_1(s)\\
&0
\end{bmatrix},\q
\Delta \Xi^i(s) \triangleq 
\begin{bmatrix}
\Delta \sigma^i_x(s), \Delta \sigma^i_{x_\delta}(s), \Delta \sigma^i_{\tilde{x}}(s)
\end{bmatrix},
\end{equation*}
\begin{equation*}
\bar{D}^i(t, s) \triangleq 
\begin{bmatrix}
&\frac{1}{2}X_1(s)^{\top}\partial^2\sigma^i(s)X_1(s) + \Delta \Xi^i(s)X_1(s)\\
&1_{(\delta, \i)}(t-s)[\frac{1}{2}X_1(s)^{\top}\partial^2\sigma^i(s)X_1(s) + \Delta \Xi^i(s)X_1(s)]\\
&0
\end{bmatrix}.
\end{equation*}
In view of \eqref{3.1} and \eqref{3..1}, we have
\begin{align}\label{X1}
X_1 (t) = \int_{0}^{t}[A(t,s)X_1(s) + B(t,s)]ds + \sum_{i=1}^{d} \int_{0}^{t}[C^i(t,s)X_1(s) + D^i(t,s)]dW^j(s),\q t\in [0, T]
\end{align}
and 
\begin{align}\label{X2}
X_2 (t) = \int_{0}^{t}[A(t,s)X_2(s) + \bar{B}(t,s)]ds + \sum_{i=1}^{d} \int_{0}^{t}[C^i(t,s)X_2(s) + \bar{D}^i(t,s)]dW^i(s),\q t\in [0, T].
\end{align}
By Wang--Yong \cite[page 3613]{wang2023spike} and \autoref{A1}, \eqref{X1} and \eqref{X2} both admit unique solutions $X_1, X_2 \in C_{\dbF}\big([0, T]; L^\beta(\Om; \dbR^{3n})\big)$ for $\beta > 1$, respectively.

\subsection{The adjoint equations and the variational equation of $y^\e - y^*$ }\label{subsection 3.2}
We conjecture that there may exist some relationship between delayed forward stochastic control problem and problem (O), and it might be possible to derive the maximum principle for problem (O) based on the conclusions from delayed forward stochastic control system. Therefore,
in this subsection, we first introduce the first-order and second-order adjoint equations directly,  which are similar in form to the ones in Meng--Shi--Wang--Zhang \cite{meng2025general}. Then we will introduce the variational equation of $y^\e - y^*$ and provide the corresponding estimates.

The following is the first-order and second-order adjoint equations:

\begin{equation}\label{adjointe}\left\{\begin{aligned}
%
&(a)\q \lambda(t) = \big(\tilde{\Gamma}(T)\bar{H}\big)^\top - \sum_{i=1}^{d}\int_{t}^{T}\nu^{ i}(s)dW^i(s), \q t\in [0, T];\\
&(b)\q \eta(t) = \big(\tilde{\Gamma}(t)\bar{F}(t)\big)^{\top} + A(T, t)^\top\big(\tilde{\Gamma}(T)\bar{H}\big)^{\top} + \sum_{i=1}^{d} C^i(T,t)^\top \nu^{i}(t) 
\\
&\q\q\q\q\q+ \int_{t}^{T}\Big[A(s,t)^\top \eta(s)+ \sum_{i=1}^{d}C^i(s,t)^\top \zeta^{i}(s,t)\Big]ds - \sum_{i=1}^{d}\int_{t}^{T}\zeta^{ i}(t,s)dW^i(s), \q t\in [0, T];\\
&(c) \q \eta(t) = \dbE[\eta(t)] + \sum_{i=1}^{d} \int_{0}^{t} \zeta^i(t,s)dW^i(s) ,\q t\in [0, T],
\end{aligned}\right.\end{equation}

and
\begin{equation}\label{Adjointe}\left\{\begin{aligned}
&(a)\q P_1(r) = \tilde{\Gamma}(T)H - \sum_{i=1}^{d}\int_{r}^{T}Q^{ i}_1(\theta)dW^i(\theta), \q 0\leq r \leq T;\\
&(b)\q P_2(r) =  A(T, r)^\top P_1(r) + \sum_{i=1}^{d}C^i(T,r)^\top Q_1^{ i}(r) \\
&\q\q\q\q\q+ \int_{r}^{T}\Big[A(\theta,r)^\top P_2(\theta)+ \sum_{i=1}^{d}C^i(\theta,r)^\top Q_2^{ i}(\theta,r)\Big]d\theta - \sum_{i=1}^{d}\int_{r}^{T}Q_2^{ i}(r,\theta)dW^i(\theta), \q 0\leq r \leq T;\\
&(c) \q P_3(r) = \partial^2 G(r)   +  \sum_{i=1}^{d}C^i(T,r)^\top P_1(r) C^i(T,r) \\
&\q\q\q+\sum_{i=1}^{d}\int_{r}^{T}\Big[C^i(T,r)^\top P_2(\theta)^{\top} C^i(\theta,r) + C^i(\theta,r)^\top P_2(\theta) C^i(T,r) + C^i(\theta,r)^\top P_3(\theta) C^i(\theta,r)\Big]d\theta \\
&\q\q\q+\int_{r}^{T}\int_{r}^{T}C^i(\theta,r)^\top P_4(\theta', \theta)C^i(\theta',r)d\theta d\theta'
-\sum_{i=1}^{d}\int_{r}^{T}Q_3^{ i}(r,\theta)dW^i(\theta), \q 0\leq r \leq T;\\
&(d) \q P_4(\theta, r) = A(T,r)^\top P_2(\theta)^\top + \sum_{i=1}^{d}C^i(T,r)^\top Q^{ i}_2(\theta,r)^\top + A(\theta,r)^\top P_3(\theta) + \sum_{i=1}^{d}C^i(\theta, r)^\top Q^{ i}_3(\theta,r) \\
&\q\q\q\q + \int_{r}^{T}\Big[\sum_{i=1}^{d}C^i(\theta',r)^\top Q_4^{ i}(\theta, \theta', r) + A(\theta', r)^\top P_4(\theta,\theta')\Big]d\theta' - \sum_{i=1}^{d}\int_{r}^{T}Q^{ i}_4(\theta,r,\theta')dW^i(\theta'), \\
&\q\q\q\q\q\q\q\q\q\q\q\q\q\q\q\q\q\q\q\q\q\q\q\q\q\q\q\q\q\q\q\q\q\q\q\q0\leq r \leq \theta \leq T;\\
&(e)\q P_4(\theta,r) = P_4(r,\theta)^\top, \q Q_4(\theta, r, \theta') = Q_4(r,\theta, \theta')^\top, \q 0\leq \theta \leq r\leq T,
\end{aligned}\right.\end{equation}
subject to the following constraints:
\begin{equation}\label{subjecte}\left\{\begin{aligned}
&(a)\q P_2(r) = \dbE_{\theta}[P_2(r)]+ \sum_{i=1}^{d}\int_{\theta}^{r}Q_2^{ i}(r,\theta')dW^i(\theta'), \q 0\leq \theta \leq r \leq T;\\
&(b)\q P_3(r) = \dbE_{\theta}[P_3(r)]+ \sum_{i=1}^{d}\int_{\theta}^{r}Q_3^{ i}(r,\theta')dW^i(\theta'), \q 0\leq \theta \leq r \leq T; \\
&(c) \q P_4(\theta, r) = \dbE_{\theta'}[P_4(\theta, r)]+ \sum_{i=1}^{d}\int_{\theta'}^{r\wedge \theta}Q_4^{ i}(\theta, r,s)dW^i(s), \q 0\leq \theta' \leq (\theta \wedge r) \leq T, \\
\end{aligned}\right.\end{equation}
where 
\begin{equation}\label{H}
\bar{H} \triangleq
\begin{bmatrix}
h_x(T) & h_{x_\delta}(T) & h_{\tilde{x}}(T) 
\end{bmatrix},\q
H \triangleq 
\begin{bmatrix}
h_{xx}(T) & h_{ xx_\delta}(T) & h_{x\tilde{x}}(T)\\ 
h_{x_\delta x}(T) & h_{ x_\delta x_\delta}(T) & h_{x_\delta\tilde{x}}(T)\\
h_{\tilde{x}x}(T) & h_{ \tilde{x}x_\delta}(T) & h_{\tilde{x}\tilde{x}}(T)
\end{bmatrix},
\,\,\,\,\,\,\,\,\,\,\,\,\,\,\,\,\,\,\,\,\,\,\,\,\,\,\,\,\,\,\,\,\,\,\,\,\,\,\,\,\,\,\,\,\,\,\,\,\,\,\,\,\,\,\,\,\,\,\,\,\,\,\,\,\,\,\,\,\,\,\,\,
\end{equation}
\begin{equation}\label{F}
\bar{F}(s) \triangleq	
\begin{bmatrix}
f_x(s) & f_{x_\delta}(s) & f_{\tilde{x}}(s) 
\end{bmatrix},\q
F(s) \triangleq 
\begin{bmatrix}
f_{xx}(s) & f_{ xx_\delta}(s) & f_{x\tilde{x}}(s)\\ 
f_{x_\delta x}(s) & f_{ x_\delta x_\delta}(s) & f_{x_\delta\tilde{x}}(s)\\
f_{\tilde{x}x}(s) & f_{ \tilde{x}x_\delta}(s) & f_{\tilde{x}\tilde{x}}(s)\\
\end{bmatrix},
\,\,\,\,\,\,\,\,\,\,\,\,\,\,\,\,\,\,\,\,\,\,\,\,\,\,\,\,\,\,\,\,\,\,\,\,\,\,\,\,\,\,\,\,\,\,\,\,\,\,\,\,\,\,\,\,\,\,\,\,\,\,\,\,\,\,\,\,\,\,\,\,
\end{equation}
\begin{align}
&\tilde{\Gamma}(s) \triangleq \exp\Big\{\int_{0}^{s}f_{y}(r)dr\Big\}\mathcal{E}\Big(\int_{0}^{s}f_z(r)^\top dW(r)\Big), \q s\in [0,T],\label{gamma}\\
&G(s,x,x_\delta, \tilde{x},p,q,u, \mu) 
\triangleq \tilde{\Gamma}(s) f(s, x,x_\delta, \tilde{x}, y^*(s),z^*(s),u, \mu) +  p^{\top} b(s,x,x_\delta, \tilde{x}, u, \mu) \nonumber\\
&\q\q\q\q\q\q\q\q\q\q\q+ \sum_{i=1}^{d}q^{ i\top}\sigma^i(s,x,x_\delta, \tilde{x}, u, \mu)\label{Me}
\,\,\,\,\,\,\,\,\,\,\,\,\,\,\,\,\,\,\,\,\,\,\,\,\,\,\,\,\,\,\,\,\,\,\,\,\,\,\,\,\,\,\,\,\,\,\,\,\,\,\,\,\,\,\,\,\,\,\,\,\,\,\,\,\,\,\,\,\,\,\,\,\,\,\,\,\,\,\,\,\,\,\,\,\,\,\,\,\,\,\,\,\,\,\,\,\,\,\,\,\,\,\,\,\,\,\,\,\,\,\,\,\,\,\,\,\,\,\,\,\,\,\,\,\,\,\,\,\,\,\,\,\,\,\,\,\,\,\,\,\,\,\,\,
\end{align}
with
\begin{equation}\label{ade}\left\{\begin{aligned}
&p(s) \triangleq 
\lambda^0(s) + \lambda^1(s)1_{[0,T-\delta)}(s)\\
&\q\q\q+\dbE_s\Big[\int_{s}^{T}\eta^{0}(t)dt+ \int_{s+\delta}^{T} \eta^{1}(t)dt1_{[0,T-\delta)}(s) \Big], \q 0\leq s \leq T;\\
& q^{ i}(s) \triangleq \nu^{0i}(s) + \nu^{1i}(s)1_{[0,T-\delta)}(s)\\
&\q\q\q +
\int_{s}^{T}\zeta^{0i}(t,s)dt
+ \int_{s+\delta}^{T} \zeta^{1i}(t,s)dt1_{[0,T-\delta)}(s), \q i =1,..., d, \q 0\leq s \leq T
\end{aligned}\right.\end{equation}
and for any $i = 1,..., d$,
\begin{equation} 
    \lambda(s) \triangleq 
	\begin{bmatrix}
		&\lambda^{0}(s) \\
		& \lambda^{1}(s)\\
		& \lambda^{2}(s)
	\end{bmatrix},\q
	\nu^{ i}(s) \triangleq
	\begin{bmatrix}
		&\nu^{0i}(s)\\
		&\nu^{1i}(s)\\
		& \nu^{2i}(s)
	\end{bmatrix},\q
\eta(s) \triangleq 
\begin{bmatrix}
&\eta^{0}(s) \\
& \eta^{1}(s)\\
& \eta^{2}(s)
\end{bmatrix},\q
\zeta^{ i}(s) \triangleq
\begin{bmatrix}
&\zeta^{0i}(s)\\
&\zeta^{1i}(s)\\
& \zeta^{2i}(s)
\end{bmatrix}, 
\end{equation}
and $\partial^2 G$ is the Hessian matrice of $G$ with respect to $(x, x_\delta, \tilde{x})$.
\begin{remark}
If the system \eqref{2.1} degenerates to delayed forward stochastic control system, then $f_y(s) = f_z(s) = 0, s\in[0, T]$. As a result, we have $\tilde{\Gamma} (s) = 1, s\in[0, T]$. In this case, the adjoint equations \eqref{adjointe} and \eqref{Adjointe} coincide with those in Meng--Shi--Wang--Zhang \cite{meng2025general}.
\end{remark}

The following is a well-posedness result of adjoint equations. 
\begin{proposition}\label{Ad-uniquee}
Let \autoref{A1}  hold. 
Then, \eqref{adjointe} admits a unique solution $(\lambda(\cd), \nu(\cd)) \in L^{2}_{\dbF}(0, T;\dbR^{3n}) \times L^{2}_{\dbF}(0, T;\dbR^{3n\times d})$,
$(\eta(\cd), \zeta(\cd,\cd)) \in L^{2}_{\dbF}(0, T;\dbR^{3n}) \times L^2\big(0, T ; L^2_{\dbF}(0, T; \dbR^{3n\times d})\big)$
and \eqref{Adjointe} admits a unique solution $(P_1(\cd), Q_1(\cd)) \in L^{2}_{\dbF}(0, T; \dbR^{3n\times 3n}) \times \big(L^{2}_{\dbF}(0, T; \dbR^{3n\times 3n})\big)^d$,
$P_2(\cd),  \in L^{2}_{\dbF}(0, T; \dbR^{3n\times 3n})$,
$P_3(\cd) \in L^{2}_{\dbF}(0, T; \dbR^{3n\times 3n}) , $
$P_4(\cd,\cd) \in L^2\big(0, T ; L^2_{\dbF}(0, T; \dbR^{3n\times 3n})\big) $.
\end{proposition}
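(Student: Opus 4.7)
My plan is to solve the two adjoint systems \eqref{adjointe}--\eqref{Adjointe} sequentially, combining martingale representation, the BSDE $L^\beta$-estimate in \autoref{prop3.1}, and the BSVIE well-posedness in \autoref{BSVIE-unique}; the only genuine analytic difficulty lies in closing a fixed-point loop for the coupled pair $(P_3,P_4)$.

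For \eqref{adjointe}, I first observe that by \autoref{A1} and \autoref{thm2.1} the terminal datum $\tilde{\Gamma}(T)\bar H$ is in $L^p_{\sF_T}(\Om;\dbR^{3n})$ for every $p>1$: the boundedness of $f_y,f_z$ in \eqref{gamma} makes $\tilde{\Gamma}(T)$ have moments of every order, and $\partial^2 h$ bounded together with $x^*,x^*_\delta,\tilde x^*\in S^p_{\dbF}$ gives the same for $\bar H$ via its linear growth. Equation (a) is then pure martingale representation and produces a unique $(\lambda,\nu)$ in the claimed spaces. With $(\lambda,\nu)$ available, equation (b) is a linear BSVIE whose free term $(\tilde\Gamma\bar F)^\top+A(T,\cdot)^\top(\tilde\Gamma(T)\bar H)^\top+\sum_i C^i(T,\cdot)^\top\nu^i(\cdot)$ lies in $L^2_{\dbF}(0,T;\dbR^{3n})$ and whose kernel depends linearly on $(\eta,\zeta)$ through bounded, jointly measurable coefficients $A(s,t),C^i(s,t)$, so \autoref{BSVIE-unique} delivers a unique $(\eta,\zeta)$ of the claimed regularity; the representation (c) is automatic from \eqref{martingale condition}.

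For \eqref{Adjointe}, the first two steps are entirely parallel: $\tilde\Gamma(T)H\in L^p$ for all $p>1$ by \autoref{A1}(i), so (a) gives $(P_1,Q_1)$ by martingale representation, and with $(P_1,Q_1)$ at hand (b) becomes a linear BSVIE to which \autoref{BSVIE-unique} applies, yielding $(P_2,Q_2)$ together with the constraint (a) in \eqref{subjecte}. The equations (c)--(e) for $(P_3,Q_3,P_4,Q_4)$ are coupled: for each fixed $\theta$, equation (d) reads as a linear BSVIE in $r\in[0,\theta]$ for $P_4(\theta,\cdot)$ whose free term involves $P_2,Q_2,P_3,Q_3$, while (c) is a BSVIE in $r$ for $P_3$ whose free term is a double integral over $[r,T]^2$ of $P_4$; the symmetry in (e) extends $P_4$ from $\{0\le r\le\theta\le T\}$ to the full square. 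I would run a Picard iteration: starting from $P_3^{(0)}=0$, use \autoref{BSVIE-unique} on (d) (for each $\theta$) to obtain $(P_4^{(k)},Q_4^{(k)})$, extend by (e), then substitute into (c) and solve the resulting linear BSVIE for $P_3^{(k+1)}$. Using boundedness of $A,C^i,\partial^2 G$ together with the a priori estimate \eqref{Y}, I would show the map $P_3^{(k)}\mapsto P_3^{(k+1)}$ is a contraction on $L^2_{\dbF}(T-\tau,T;\dbR^{3n\times 3n})$ for a sufficiently small $\tau>0$, then iterate backward from $T$ over finitely many intervals of length $\tau$ to cover $[0,T]$.

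The main obstacle is precisely this contraction step, because $P_4$ enters (c) through a double integral, so the natural $L^2$-bound on the increment of $P_3$ inherits a factor of order $(T-r)^2$ rather than $(T-r)$, and one must also carry along the coupling through (e) between $P_4$ on the upper and lower triangles. A short-interval localization together with a weighted norm $e^{-\beta t}$ (with $\beta$ large) is what I would use to make the contraction constant strictly less than one; once this is done, the symmetry \eqref{Adjointe}(e) is self-consistent because transposing the right-hand side of (d) and swapping $r\leftrightarrow\theta$ yields the equation satisfied by $P_4(r,\theta)^\top$ on the opposite triangle, and uniqueness from \autoref{BSVIE-unique} closes the argument.
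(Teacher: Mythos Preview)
Your approach is essentially the same as the paper's: establish moment bounds on $\tilde\Gamma$ from the boundedness of $f_y,f_z$, then solve \eqref{adjointe}(a) by martingale representation (the paper cites Pardoux--Peng), \eqref{adjointe}(b)(c) by \autoref{BSVIE-unique}, and likewise for \eqref{Adjointe}(a)(b). For the coupled system \eqref{Adjointe}(c)--(e) the paper simply writes ``by \autoref{BSVIE-unique} and a similar proof of Wang--Yong \cite[Theorem~5.2]{wang2023spike}'' and stops, whereas you sketch the underlying Picard/short-interval contraction; that sketch is in the right spirit and is what the cited reference carries out.

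One slip worth fixing: you invoke ``boundedness of $\partial^2 G$'', but $\partial^2 G(r)=\tilde\Gamma(r)\,\partial^2 f(r)+p(r)^\top\partial^2 b(r)+\sum_i q^i(r)^\top\partial^2\sigma^i(r)$ is not bounded, since $\tilde\Gamma,p,q$ are only in $L^2_{\dbF}$ (or better). What you actually need, and what holds, is $\partial^2 G\in L^2_{\dbF}(0,T;\dbR^{3n\times 3n})$, which suffices for the free term in the BSVIE estimate \eqref{Y}; the contraction then runs on the linear kernels $A,C^i$, which \emph{are} bounded. With that correction your plan goes through.
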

\begin{proof}
For $\beta>1$, it is easy to verify that $\dbE\Big[\exp\Big\{\frac{1}{2}\int_{0}^{T}\beta^2|f_z(s)|^2 ds\Big\} \Big]< \i$, which implies that $\Big\{\mathcal{E}\Big(\int_{0}^{t}\beta f_z^\top (s)dW(s)\Big)\Big\}_{t\in [0, T]}$ is a martingale. It then follows from \autoref{A1} and Doob's inequality that
\begin{align}\label{G1}
&\dbE [\mathop{\sup}\limits_{t \in [0,T]} |\tilde{\Gamma}(t)|^\beta] = \dbE \bigg[\mathop{\sup}\limits_{t \in [0,T]} \Big\{ \exp\Big(\int_{0}^{t}f_{y}(s)ds\Big)\mathcal{E}\Big(\int_{0}^{t}f_{z}^{\top}(s)dW_s\Big)\Big\}^\beta\bigg]\nonumber\\
&\leq  \exp\Big\{\beta T\|f_{y}\|_{L^\i_{\dbF}(0,T;\dbR)}\Big\} \dbE \Big[\mathop{\sup}\limits_{t \in [0,T]} \Big\{\mathcal{E}\Big(\int_{0}^{t}f_{z}^{\top}(s)dW_s\Big)\Big\}^\beta\Big]\nonumber\\
&\leq \frac{\beta}{\beta-1} \exp\Big\{\beta T\|f_{y}\|_{L^\i_{\dbF}(0,T;\dbR)}\Big\} \dbE \Big[ \Big\{\mathcal{E}\Big(\int_{0}^{T}f_{z}^{\top}(s)dW(s)\Big)\Big\}^\beta\Big]\nonumber\\
&\leq \frac{\beta}{\beta-1} \exp\Big\{\beta T\|f_{y}\|_{L^\i_{\dbF}(0,T;\dbR)}\Big\} \dbE \Big[ \mathcal{E}\Big(\int_{0}^{T}\beta f_{z}^{\top}(s)dW(s)\Big) \cd \exp\Big\{\frac{\beta(\beta-1)}{2}\int_{0}^{T}|f_z(s)|^2ds\Big\}\Big]\nonumber\\
& \leq \exp\Big\{\beta T\|f_{y}\|_{L^\i_{\dbF}(0,T;\dbR)} + \frac{\beta(\beta-1)T}{2}\|f_{z}\|^2_{L^\i_{\dbF}(0,T;\dbR^d)}\Big\}   < \i.
\end{align}
Thus, we obtain that for $\beta > 1$,
$
\dbE [\mathop{\sup}\limits_{t \in [0,T]} |\tilde{\Gamma}(t)|^\beta] \leq K < \i.
$
By Pardoux--Peng \cite{Peng1990}, we have that \eqref{adjointe}(a) admits a unique solution $(\lambda(\cd), \nu(\cd)) \in L^{2}_{\dbF}(0, T;\dbR^{3n}) \times L^{2}_{\dbF}(0, T;\dbR^{3n\times d})$.
And then we can check that \eqref{adjointe}(b)(c) satisfies the conditions of \autoref{BSVIE-unique}. Therefore, we conclude that \eqref{adjointe}(b)(c) admits a unique solution 
$(\eta(\cd), \zeta(\cd,\cd)) \in L^{2}_{\dbF}(0, T;\dbR^{3n}) \times L^2\big(0, T ; L^2_{\dbF}(0, T; \dbR^{3n\times d})\big)$.
Finally, by \autoref{BSVIE-unique} and a similar proof of Wang--Yong \cite[Theorem 5.2]{wang2023spike}, we can get the rest of result immediately.
\end{proof}
Now we can introduce the variational equation of $y^\e - y^* $:
\begin{equation}\label{y5}
\begin{aligned}
\hat{y}(t) 
=&\int_{t}^{T} \Big\{ f_y(s) \hat{y}(s) + f_z(s)^\top \hat{z}(s)  +\tilde{\Gamma}(s)^{-1}I(s)\Big\}ds - \int_{t}^{T}\hat{z}(s)dW(s),\q t\in [0, T],
\end{aligned} \end{equation}
where 
\begin{align}\label{Ke}
&I(s) \triangleq \Delta G(s) 1_{[t_0, t_0+\e]}(s) + \Delta \tilde{G}(s)1_{[t_0+\delta, t_0+\delta+\e]}(s) 1_{[0, T-\delta]}(t_0) \nonumber\\
&\q+ \frac{1}{2}\sum_{i=1}^{d} \Delta^{1*} \sigma^i(s)^\top \mathcal{P}(s) \Delta^{1*} \sigma^i(s)1_{[t_0, t_0+\e]}(s) 
+ \frac{1}{2}\sum_{i=1}^{d} \Delta^{2*} \sigma^i(s)^\top \mathcal{P}(s) \Delta^{2*} \sigma^i(s)1_{[t_0+\delta, t_0+\delta+\e]}(s),\nonumber\\
&\Delta G(s) = G(s,x^*(s), x_\delta^*(s), \tilde{x}^*(s), y^*(s), z^*(s), p(s), q(s), u(s), \mu^*(s) ) \nonumber\\
&\q\q\q\q- G(s,x^*(s), x_\delta^*(s), \tilde{x}^*(s) , y^*(s), z^*(s), p(s), q(s), u^*(s), \mu^*(s) ), \nonumber\\
&\Delta \tilde{G}(s) = G(s,x^*(s), x_\delta^*(s), \tilde{x}^*(s),y^*(s), z^*(s), p(s), q(s), u^*(s), \mu(s) ) \nonumber\\
&\q\q\q\q- G(s,x^*(s), x_\delta^*(s), \tilde{x}^*(s),y^*(s), z^*(s), p(s), q(s), u^*(s), \mu^*(s) ),
\,\,\,\,\,\,\,\,\,\,\,\,\,\,\,\,\,\,\,\,\,\,\,\,\,\,\,\,\,\,\,\,\,\,\,\,\,\,\,\,\,\,\,\,\,\,\,\,\,\,\,\,\,\,\,\,\,\,\,\,\,\,\,\,\,\,\,\,\,\,\,\,\,\,\,\,\,\,\,\,\,\,\,\,\,\,\,\,\,\,\,\,\,\,\,\,\,\,\,\,\,\,\,\,\,
\end{align}
\begin{equation}\label{delta_sigma}
\begin{aligned}
& \Delta^{1*} \sigma^i(s) \triangleq \sigma^i(s, x^*(s), x^*_\delta(s), \tilde{x}^*(s),u(s), \mu^*(s)) - \sigma^i(s), \\
&\Delta^{2*} \sigma^i(s)\triangleq \sigma^i(s, x^*(s), x^*_\delta(s), \tilde{x}^*(s),u^*(s), \mu(s)) - \sigma^i(s), \q i =1,..., d,\,\,\,\,\,\,\,\,\,\,\,\,\,\,\,\,\,\,\,\,\,\,\,\,\,\,\,\,\,\,\,\,\,\,\,\,\,\,\,\,\,\,\,\,\,\,\,\,\,\,\,\,\,\,\,\,\,\,\,\,\,\,\,\,\,\,\,\,\,\,\,\,\,\,\,\,\,\,\,\,\,\,\,\,\,\,\,\,\,\,\,\,\,\,\,\,\,\,\,\,\,\,\,\,\,
\end{aligned}\end{equation}
\begin{align}\label{P1}
\mathcal{P}(s) \triangleq& \tilde{\Gamma}(T)h_{xx}(T) + \tilde{\Gamma}(T)\big[h_{x_\delta x}(T)+ h_{xx_\delta}(T) + h_{x_\delta x_\delta}(T)\big]1_{[0, T-\delta)}(s) \nonumber\\
&+\int_{s}^{T}[P_2^{(11)}(\theta)^\top + P_2^{(11)}(\theta)]d\theta + \int_{s}^{T}[P_2^{(12)}(\theta)^\top + P_2^{(12)}(\theta)]d\theta1_{[0, T-\delta)}(s)\nonumber\\
&+\int_{s+\delta}^{T}[P_2^{(21)}(\theta)^\top + P_2^{(21)}(\theta) + P_2^{(22)}(\theta)^\top + P_2^{(22)}(\theta)]d\theta1_{[0, T-\delta)}(s)\nonumber\\
&+\int_{s}^{T}\int_{s}^{T}P_4^{(11)}(\theta',\theta)d\theta d\theta' + \bigg\{\int_{s+\delta}^{T}\int_{s}^{T}P_4^{(12)}(\theta',\theta)d\theta d\theta'\nonumber\\
&+\int_{s}^{T}\int_{s+\delta}^{T}P_4^{(21)}(\theta',\theta)d\theta d\theta' + \int_{s+\delta}^{T}\int_{s+\delta}^{T}P_4^{(22)}(\theta',\theta)d\theta d\theta'\bigg\}1_{[0, T-\delta)}(s)\nonumber\\
&+\int_{s}^{T}P_3^{(11)}(\theta)d\theta + \int_{s+\delta}^{T}[P_3^{(21)}(\theta)+P_3^{(12)}(\theta)+P_3^{(22)}(\theta)]d\theta 1_{[0, T-\delta)}(s),\,\,\,\,\,\,\,\,\,\,\,\,\,\,\,\,\,\,\,\,\,\,\,\,\,\,\,\,\,\,\,\,\,\,\,\,\,\,\,\,\,\,\,\,\,\,\,\,\,\,\,\,\,\,\,\,\,\,\,\,\,\,\,\,\,\,\,\,\,\,\,\,\,\,\,
\end{align}
$(\eta(\cd), \zeta(\cd, \cd))$ and $P_i(\cd), i=1,2,3, P_4(\cd,\cd)$ are the unique solutions of \eqref{adjointe} and \eqref{Adjointe}, respectively,
$(p(\cd), q(\cd))$ is defined in \eqref{ade}, and for $k = 1,2,3,$
\begin{equation*}
P_{k}(\theta) \triangleq 
\begin{bmatrix}
&P_{k}^{(11)}(\theta) & P_{k}^{(12)}(\theta)&P_{k}^{(13)}(\theta)\\
&P_{k}^{(21)}(\theta) & P_{k}^{(22)}(\theta)&P_{k}^{(23)}(\theta)\\
&P_{k}^{(31)}(\theta) & P_{k}^{(32)}(\theta)&P_{k}^{(33)}(\theta)
\end{bmatrix}, 
P_{4}(\theta,\theta') \triangleq 
\begin{bmatrix}
&P_{4}^{(11)}(\theta,\theta') & P_{4}^{(12)}(\theta,\theta')&P_{4}^{(13)}(\theta,\theta')\\
&P_{4}^{(21)}(\theta,\theta') & P_{4}^{(22)}(\theta,\theta')&P_{4}^{(23)}(\theta,\theta')\\
&P_{4}^{(31)}(\theta,\theta') & P_{4}^{(32)}(\theta,\theta')&P_{4}^{(33)}(\theta,\theta')
\end{bmatrix}.
\,\,\,\,\,\,\,\,\,\,\,\,\,\,\,\,\,\,\,\,\,\,\,\,\,\,\,\,\,\,\,\,\,\,\,\,\,\,\,\,\,\,\,\,\,\,\,\,\,\,\,\,\,\,\,\,\,\,\,\,\,\,\,\,\,\,\,\,\,\,\,\,\,\,\,\,\,\,\,\,\,\,\,\,\,\,\,\,\,\,\,\,\,\,\,\,\,\,\,\,\,\,\,\,\,\,\,\,\,\,\,\,\,\,\,\,\,\,\,\,\,\,\,\,\,\,\,\,\,\,\,\,\,\,\,\,\,\,\,\,\,\,\,\,\,\,\,\,\,\,\,
\end{equation*}	

Finally, we give the estimates concerning the variational equation \eqref{y5}. 
Recall that $t_0$ is the left endpoint for the small interval of perturbation. 
\begin{proposition}\label{est-I}
Let \autoref{A1} hold. Then, for $1<\beta<2$, we have that there exists a set $E_1^\beta$ such that $m(E^\beta_1) = 0$ and for any $t_0 \in [0, T] \backslash E^\beta_1$,
\begin{align}\label{est-y}
\dbE\Big[ \mathop{\sup}\limits_{t\in [0, T]} |\hat{y}(t)|^{\beta} + \Big(\int_{0}^{T}|\hat{z}(t)|^2dt\Big)^{\frac{\beta}{2}}\Big] = O(\e^\beta).
\end{align}
\end{proposition}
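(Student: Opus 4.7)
The strategy is to apply the $L^\beta$ BSDE estimate \autoref{prop3.1} to equation \eqref{y5} and then control the free term $\tilde\Gamma(s)^{-1}I(s)$. Since the generator of \eqref{y5} is Lipschitz in $(\hat y,\hat z)$ with constants given by the bounded coefficients $f_y,f_z$ (see \autoref{A1}), and the terminal datum is $0$, \autoref{prop3.1} reduces the claim to showing
\[
\dbE\Big[\Big(\int_0^T|\tilde\Gamma(s)^{-1}I(s)|\,ds\Big)^\beta\Big]=O(\e^\beta)\q\hbox{for $t_0$ outside a null set $E_1^\beta$.}
\]

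\textbf{Linear summands.} I would decompose $I(s)$ into the four summands of \eqref{Ke}, each supported on $[t_0,t_0+\e]$ or $[t_0+\delta,t_0+\delta+\e]$. For the two linear summands $\Delta G(s)$ and $\Delta\tilde G(s)$, Hölder in $s$ yields $\big(\int_{t_0}^{t_0+\e}\phi(s)\,ds\big)^\beta\le\e^{\beta-1}\int_{t_0}^{t_0+\e}\phi(s)^\beta\,ds$, so it suffices to verify that $s\mapsto\dbE[|\tilde\Gamma(s)^{-1}\Delta G(s)|^\beta]$ (and its shifted counterpart for $\Delta\tilde G$) belongs to $L^1([0,T])$. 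Since $|\Delta G|\le K(|p(s)|+|q(s)|+1)(1+|u(s)|+|u^*(s)|)$ with $(p,q)$ built from the $L^2_{\dbF}$-processes $(\lambda,\nu,\eta,\zeta)$ via \eqref{ade} and \autoref{Ad-uniquee}, and since $\tilde\Gamma^{\pm1}$ has all moments (by the argument of \eqref{G1}) and $u,u^*$ have all moments (by the admissibility of $\mathcal{U}_{ad}$), Hölder in $\omega$ together with $\beta<2$ (Jensen) delivers this integrability.

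\textbf{Quadratic summands.} For the two summands involving $\mathcal P(s)$, I would apply Cauchy--Schwarz in $s$ to separate the factors:
\[
\int_{t_0}^{t_0+\e}|\tilde\Gamma^{-1}(s)||\Delta^{1*}\sigma^i(s)|^2|\mathcal P(s)|\,ds\le X^{1/2}Y^{1/2},
\]
with $X=\int_{t_0}^{t_0+\e}|\tilde\Gamma^{-1}|^2|\Delta^{1*}\sigma^i|^4\,ds$ and $Y=\int_{t_0}^{t_0+\e}|\mathcal P|^2\,ds$. Raising to the power $\beta$ and applying Hölder in $\omega$ with conjugate exponents $p=\tfrac{2}{2-\beta}$, $q=\tfrac{2}{\beta}$ gives $\dbE[X^{\beta/2}Y^{\beta/2}]\le(\dbE[X^{\beta/(2-\beta)}])^{(2-\beta)/2}(\dbE[Y])^{\beta/2}$. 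The first factor is $O(\e^{\beta/2})$ after a further Hölder in $s$, since $\tilde\Gamma^{-1}$ and $\Delta^{1*}\sigma^i$ have uniformly-in-$s$ bounded moments of every order; the second factor is $O(\e^{\beta/2})$ because $\sup_s\dbE[|\mathcal P(s)|^2]<\infty$ follows from \autoref{Ad-uniquee} applied to the $\int_s^T P_2$, $\int_s^T P_3$ and $\int_s^T\int_s^T P_4$ contributions in \eqref{P1} via Cauchy--Schwarz. The strict inequality $\beta<2$ is exactly what keeps $p,q>1$ and this estimate uniform in $t_0$.

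\textbf{Lebesgue differentiation and main obstacle.} Finally, letting $E_1^\beta$ be the (null) union of the non-Lebesgue-point sets of the finitely many $L^1([0,T])$-maps produced in the linear step, \autoref{Lebes} ensures that for every $t_0\notin E_1^\beta$, each $\tfrac{1}{\e}\int_{t_0}^{t_0+\e}\dbE[\phi(s)^\beta]\,ds$ is bounded uniformly in $\e$, so multiplication by $\e^{\beta-1}$ gives $O(\e^\beta)$. Combined with the already uniform-in-$t_0$ quadratic bound, this proves \eqref{est-y}. The main technical obstacle is precisely the $\mathcal P$-weighted quadratic summand: \autoref{Ad-uniquee} furnishes only an $L^2$ bound on $\mathcal P$, which forces the threshold $\beta<2$; obtaining the same estimate beyond this range would require moment bounds on $P_2,P_3,P_4$ outside the scope of \autoref{Ad-uniquee}, and this is precisely the mechanism behind the $\dbE[\cd]=o(\e)$ dual relationship emphasized in the introduction.
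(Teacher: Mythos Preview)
Your proposal is correct and follows essentially the same route as the paper: apply \autoref{prop3.1} to \eqref{y5}, split $I(s)$ into its four summands on $[t_0,t_0+\e]$ and $[t_0+\delta,t_0+\delta+\e]$, use H\"older with exponents $\tfrac{2}{2-\beta},\tfrac{2}{\beta}$ (made possible by $1<\beta<2$) against the $L^2$-bounds on $p,q,\mathcal P$ furnished by \autoref{Ad-uniquee}, and invoke \autoref{Lebes} for the $t_0$-dependence of the $q$-contribution. The only cosmetic difference is that the paper treats the $p$-weighted linear terms with a uniform-in-$t_0$ bound and reserves Lebesgue differentiation for the $q$-weighted ones, whereas you fold all linear terms into the Lebesgue-point argument; both organizations yield the same null set $E_1^\beta$.
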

%
\begin{proof}
By \autoref{prop3.1}, we deduce that for any $1 < \beta < 2$,
\begin{align*}
\dbE\Big[ \mathop{\sup}\limits_{t\in [0, T]} |\hat{y}(t)|^{\beta} + \Big(\int_{0}^{T}|\hat{z}(t)|^2dt\Big)^{\frac{\beta}{2}}\Big] \leq K \dbE\Big[\Big(\int_{0}^{T}\tilde{\Gamma}(s)^{-1}|I(s)|ds\Big)^\beta\Big].
\end{align*}
Without loss of generality, we assume $d=1$.
It is obvious that for any $1<\beta<2$,
\begin{equation}\label{i}
\begin{aligned}
&\dbE\Big[\Big(\int_{0}^{T}\tilde{\Gamma}(s)^{-1}|I(s)|ds\Big)^\beta\Big] \leq 
K\e^{\beta -1}\bigg\{ \dbE\Big[\int_{t_0}^{t_0+\e}\tilde{\Gamma}(s)^{-\beta}|\Delta G(s)|^\beta ds\Big] + 
\dbE\Big[\int_{t_0+\delta}^{t_0+\delta+\e}\tilde{\Gamma}(s)^{-\beta}|\Delta \tilde{G}(s)|^\beta ds\Big] +\\
&  \dbE\Big[\int_{t_0}^{t_0+\e}\tilde{\Gamma}(s)^{-\beta}\big\{\Delta^{1*} \sigma(s)^\top |\mathcal{P}(s)|\Delta^{1*} \sigma(s)\big\}^\beta ds\Big]
+ \dbE\Big[\int_{t_0+\delta}^{t_0+\delta+\e}\tilde{\Gamma}(s)^{-\beta}\big\{\Delta^{2*} \sigma(s)^\top |\mathcal{P}(s)|\Delta^{2*} \sigma(s)\big\}^\beta ds\Big]\bigg\}.
\end{aligned}\end{equation}
\noindent{\bf Step 1:} In this step,
we first  estimate the term
$
\dbE\Big[\int_{t_0}^{t_0+\e}\tilde{\Gamma}(s)^{-\beta}|\Delta G(s)|^\beta ds\Big]+ 
\dbE\Big[\int_{t_0+\delta}^{t_0+\delta+\e}\tilde{\Gamma}(s)^{-\beta}|\Delta \tilde{G}(s)|^\beta ds\Big].
$
By the definition of $\Delta G(s)$ and \autoref{A1}, we have
\begin{align}
&\dbE\Big[\int_{t_0}^{t_0+\e}\tilde{\Gamma}(s)^{-\beta}|\Delta G(s)|^\beta ds\Big] \nonumber\\
&\leq	K\dbE\Big[\int_{t_0}^{t_0+\e} \Big\{\big|f(s, x^*(s),x_\delta^*(s), \tilde{x}^*(s), y^*(s),z^*(s),u(s), \mu^*(s)) - f(s, x^*(s),x_\delta^*(s),\tilde{x}^*(s), y^*(s),z^*(s),\nonumber\\
&\q  u^*(s),\mu^*(s))\big|^\beta 
+ \tilde{\Gamma}(s)^{-\beta} |p(s)|^\beta \big|b(s,x^*(s),x_\delta^*(s), \tilde{x}^*(s), u(s), \mu^*(s)) -  b(s,x^*(s),x_\delta^*(s), \tilde{x}^*(s), u^*(s) ,\nonumber\\
& \q \mu^*(s))\big|^\beta+ \tilde{\Gamma}(s)^{-\beta} |q(s)|^\beta\big|\sigma(s,x^*(s),x_\delta^*(s), \tilde{x}^*(s), u(s), \mu^*(s))-  \sigma(s,x^*(s),x_\delta^*(s), \tilde{x}^*(s), u^*(s),\nonumber \\
&\q \mu^*(s))\big|^\beta\Big\} ds\Big] \nonumber\\
&\leq K\dbE\bigg[\int_{t_0}^{t_0+\e}\bigg\{\Big(1+  |x^*(s)|^\beta  + |x_\delta^*(s)|^\beta + |\tilde{x}^*(s)|^\beta + |y^*(s)|^\beta+ |z^*(s)|^\beta + |u(s)|^\beta + |u^*(s)|^\beta + |\mu^*(s)|^\beta \Big)  \nonumber \\
&\q + \tilde{\Gamma}(s)^{-\beta}\Big( |p(s)|^\beta+|q(s)|^\beta + |p(s)|^\beta|x^*(s)|^\beta  + |p(s)|^\beta|x_\delta^*(s)|^\beta+ |p(s)|^\beta |\tilde{x}^*(s)|^\beta + |q(s)|^\beta|x^*(s)|^\beta \nonumber\\
&\q+ |q(s)|^\beta |x_\delta^*(s)|^\beta+ |q(s)|^\beta| \tilde{x}^*(s)|^\beta+ (|p(s)|^\beta + |q(s)|^\beta )\big(|u(s)|^\beta + |u^*(s)|^\beta + |\mu^*(s)|^\beta\big)\Big)\bigg\} ds\bigg] \label{rest term}.
\end{align}
We only estimate the important term. 

On one hand, we estimate the term $\dbE\Big[\int_{t_0}^{t_0+\e}\big(\tilde{\Gamma}(s)^{-1}|u^*(s)|\big)^\beta|p(s)|^\beta ds\Big]$.
Note that by using a similar deduction in \eqref{G1}, we deduce that for any $\beta > 1$, $\tilde{\Gamma}^{-1} \in S^\beta_{\dbF}(0,T;\dbR)$. 
Thus, for any $\beta > 1$, $\tilde{\Gamma}^{-1}|u| \in S^\beta_{\dbF}(0,T;\dbR)$.
It then follows from the definition of $p$,  H\"{o}lder's inequality, Tower property, and \autoref{Ad-uniquee} that for any $1<\beta <2$,
\begin{align*}\label{}
&\dbE\Big[\int_{t_0}^{t_0+\e}\big(\tilde{\Gamma}(s)^{-1}|u^*(s)|\big)^\beta|p(s)|^\beta ds\Big] \\
&\leq \|\tilde{\Gamma}^{-1}|u^*|\|_{S^{\frac{2}{2-\beta}}(0, T; \dbR)}\cd \bigg\{\dbE\Big[\int_{t_0}^{t_0+\e}|p(s)|^2 ds\Big]\bigg\}^{\frac{\beta}{2}} \e^{\frac{2-\beta}{2}}\\
&\leq K\bigg\{\dbE\Big[\int_{t_0}^{t_0+\e}|\lambda(s)|^2ds + \int_{t_0}^{t_0+\e}\int_{s}^{T} |\eta(r)|^2drds\Big]\bigg\}^{\frac{\beta}{2}} \e^{\frac{2-\beta}{2}}\\
&\leq K\bigg\{\dbE\Big[\mathop{\sup}_{s\in[0, T]}|\lambda(s)|^2\Big] + \dbE\Big[\int_{0}^{T}|\eta(r)|^2dr\Big]\bigg\}^\frac{\beta}{2}\e^{\frac{\beta}{2}} \e^{\frac{2-\beta}{2}} \leq K\e.
\end{align*}
Thus, we have 
$\dbE\Big[\int_{t_0}^{t_0+\e}\big(\tilde{\Gamma}(s)^{-1}|u^*(s)|\big)^\beta|p(s)|^\beta ds\Big] \leq K\e$, for any $1<\beta < 2$.

On the other hand, we estimate the term $\dbE\Big[\int_{t_0}^{t_0+\e}\big(\tilde{\Gamma}(s)^{-1}|u^*(s)|\big)^\beta|q(s)|^\beta ds\Big]$, $1<\beta <2$. 
By the definition of $q$ and \autoref{Ad-uniquee}, we deduce that
$
\dbE\Big[\int_{0}^{T}|q(s)|^2 ds\Big] < \i.
$
It then follows from H\"older's inequality and \autoref{Lebes} that there exists a set $E^\beta_{11}$ such that $m(E^\beta_{11}) = 0$ and for any $t_0 \in [0, T] \backslash E^\beta_{11}$,
\begin{align*}\label{}
&\lim\limits_{\e \rightarrow 0}\frac{\dbE\Big[\int_{t_0}^{t_0+\e}\big(\tilde{\Gamma}(s)^{-1}|u^*(s)|\big)^\beta|q(s)|^\beta ds\Big]}{\e} \\
&\leq \|\tilde{\Gamma}^{-1}|u^*|\|_{S^{\frac{2}{2-\beta}}(0, T; \dbR)}\cd \bigg\{\lim\limits_{\e \rightarrow 0}\frac{\int_{t_0}^{t_0+\e}\dbE[|q(s)|^2 ]ds}{\e}\bigg\}^{\frac{\beta}{2}}  = \dbE[|q(t_0)|^2 ].
\end{align*}
Thus, when $\e$ is small enough, we have 
$\dbE\Big[\int_{t_0}^{t_0+\e}\big(\tilde{\Gamma}(s)^{-1}|u^*(s)|\big)^\beta|q(s)|^\beta ds\Big] \leq K_{t_0}\e, t_0 \in [0, T] \backslash E_{11}$.
The rest of the terms in \eqref{rest term} can be estimated easily in $[0, T] \backslash E^\beta_{12}$, where $m(E^\beta_{12}) = 0$.
Thus, we have $\dbE\Big[\int_{t_0}^{t_0+\e}\tilde{\Gamma}(s)^{-\beta}|\Delta G(s)|^\beta ds\Big] \leq K_{t_0}\e$, for any $1<\beta < 2$ and $t_0 \in [0, T]\backslash (E^\beta_{11} \mathop{\bigcup} E^\beta_{12})$. 
Similarly, we have $\dbE\Big[\int_{t_0+\delta}^{t_0+\delta+\e}\tilde{\Gamma}(s)^{-\beta}|\Delta \tilde{G}(s)|^\beta ds\Big] \leq K_{t_0}\e$, for any $1<\beta < 2$ and $t_0 \in [0, T]\backslash (\tilde{E}^\beta_{11} \mathop{\bigcup} \tilde{E}^\beta_{12})$, where $m(\tilde{E}^\beta_{11}) = m(\tilde{E}^\beta_{12}) = 0$.\\
\noindent{\bf Step 2:} 
We estimate the term
$
\dbE\Big[\int_{0}^{T} \tilde{\Gamma}(s)^{-\beta}\big\{ \Delta^{1*} \sigma(s)^\top |\mathcal{P}(s)| \Delta^{1*} \sigma(s)1_{[t_0, t_0+\e]}(s) \big\}^\beta ds\Big]
+ \dbE\Big[\int_{0}^{T}\tilde{\Gamma}(s)^{-\beta}\big\{\Delta^{2*} \sigma(s)^\top |\mathcal{P}(s)| \Delta^{2*} \sigma(s)1_{[t_0+\delta, t_0+\delta+\e]}(s)\big\}^\beta ds\Big].
$
By H\"older's inequality, \autoref{A1}, and the fact that $\tilde{\Gamma}^{-1} \in S^{\beta}_{\dbF}(0,T;\dbR), \beta > 1$, we deduce that
\begin{equation}\label{P-}
\begin{aligned}
&\dbE\Big[\int_{t_0}^{t_0+\e}\tilde{\Gamma}(s)^{-\beta}  \{\Delta^{1*} \sigma(s)^\top |\mathcal{P}(s)| \Delta^{1*} \sigma(s)\}^\beta ds\Big] \\
&\leq \|\tilde{\Gamma}^{-\beta}|\Delta^{1*} \sigma|^{2\beta}\|_{S^{\frac{2}{2-\beta}}(0, T; \dbR)}\cd \bigg\{\dbE\Big[\int_{t_0}^{t_0+\e}|\mathcal{P}(s)|^2 ds\Big]\bigg\}^{\frac{\beta}{2}} \e^{\frac{2-\beta}{2}}.
\end{aligned}
\end{equation}
By the definition of $\mathcal{P}$ and \autoref{Ad-uniquee}, we deduce that for any $s\in [0, T]$,
\begin{equation}\label{P}
\begin{aligned}
&\dbE\Big[ |\mathcal{P}(s)|^2 \Big] \leq K\bigg\{\dbE\Big[ 1 + \int_{s}^{T}|P_2(\theta)|^2d\theta+\int_{s}^{T}|P_3(\theta)|^2d\theta
+\int_{s}^{T}\int_{s}^{T}|P_4(\theta',\theta)|^2d\theta d\theta'\Big]   \bigg\}\\
&\leq K\bigg\{\dbE\Big[ 1 + \int_{0}^{T}|P_2(\theta)|^2d\theta+\int_{0}^{T}|P_3(\theta)|^2d\theta
+\int_{0}^{T}\int_{0}^{T}|P_4(\theta',\theta)|^2d\theta d\theta'\Big]   \bigg\} < \i.
\end{aligned}
\end{equation}
Thus, in view of \eqref{P-} and \eqref{P}, we deduce that
\begin{align*}
	&\dbE\Big[\int_{t_0}^{t_0+\e}\tilde{\Gamma}(s)^{-\beta}  \{\Delta^{1*} \sigma(s)^\top |\mathcal{P}(s)| \Delta^{1*} \sigma(s)\}^\beta ds\Big] \\
	&\leq K\dbE\Big[ 1 + \int_{0}^{T}|P_2(\theta)|^2d\theta+\int_{0}^{T}|P_3(\theta)|^2d\theta
	+\int_{0}^{T}\int_{0}^{T}|P_4(\theta',\theta)|^2d\theta d\theta'\Big]  \e
	\leq K \e.
\end{align*}
Similarly, we have that 
\begin{align*}
\dbE\Big[\int_{t_0+\delta}^{t_0+\delta+\e} \tilde{\Gamma}(s)^{-\beta}\{\Delta^{2*} \sigma(s)^\top |\mathcal{P}(s)| \Delta^{2*} \sigma(s)\}^\beta ds\Big] \leq K\e.
\end{align*}

Let $E^\beta_1 = E^\beta_{11} \mathop{\bigcup} E^\beta_{12} \mathop{\bigcup}  \tilde{E}^\beta_{11} \mathop{\bigcup} \tilde{E}^\beta_{12}$. Combining \eqref{i}, step 1, and step 2, we deduce \eqref{est-y} holds. This completes the proof of \autoref{est-I}.

\end{proof}
Recall that $t_0$ is the left endpoint for the small interval of perturbation.  By a similar deduction of step 1 and step 2 in \autoref{est-I}, we deduce the following result easily. 
\begin{proposition}\label{est-II}
Let \autoref{A1} hold. Then, for $1<\beta<2$, we have that there exists a set $E^\beta_2$ such that $m(E^\beta_2) = 0$ and for any $t_0 \in [0, T] \backslash E^\beta_2$,
\begin{align}
\dbE\Big[  \int_{0}^{T}|I(t)|^{\beta}dt\Big] = O(\e).
\end{align}
\end{proposition}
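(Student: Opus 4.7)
The author flags this as a direct corollary of the machinery already developed for \autoref{est-I}, and indeed the plan is to reuse Steps 1 and 2 of that proof almost verbatim, with two simplifications: the weight $\tilde{\Gamma}(s)^{-\beta}$ is absent, and there is no initial Hölder step producing the prefactor $\varepsilon^{\beta-1}$, so the target estimate is $O(\varepsilon)$ rather than $O(\varepsilon^{\beta})$. The starting point is the elementary inequality $|a_1+\cdots+a_4|^\beta \leq 4^{\beta-1}\sum|a_i|^\beta$ applied to the four-piece decomposition of $I(s)$ in \eqref{Ke}. Because each piece is supported on an interval of length $\varepsilon$, this reduces the problem to bounding
\begin{equation*}
\dbE\Big[\int_{t_0}^{t_0+\e}|\Delta G(s)|^\beta ds\Big],\q \dbE\Big[\int_{t_0+\delta}^{t_0+\delta+\e}|\Delta \tilde G(s)|^\beta ds\Big],
\end{equation*}
and the two analogous quadratic-in-$\sigma$ terms involving $\mathcal{P}$.

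For the first two terms I would expand $\Delta G$ using \eqref{Me} and mimic the estimate \eqref{rest term}, dropping every occurrence of $\tilde{\Gamma}(s)^{-\beta}$. The only genuinely delicate products are $|u^*|^\beta |p|^\beta$ and $|u^*|^\beta |q|^\beta$ (and their $u$, $\mu$ analogues). By H\"older with exponents $2/(2-\beta)$ and $2/\beta$,
\begin{equation*}
\dbE\Big[\int_{t_0}^{t_0+\e}|u^*(s)|^\beta |q(s)|^\beta ds\Big] \leq \big\||u^*|^\beta\big\|_{S^{2/(2-\beta)}(0,T;\dbR)} \Big\{\dbE\int_{t_0}^{t_0+\e}|q(s)|^2 ds\Big\}^{\beta/2} \e^{(2-\beta)/2},
\end{equation*}
and the first factor is finite because $u^*(\cdot)\in \mathcal{U}_{ad}$, while the middle factor is $O(\e^{\beta/2})$ at Lebesgue points of $s\mapsto \dbE[|q(s)|^2]$ by \autoref{Lebes}. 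This is where the null set appears. The $|p|^\beta$ piece is even easier, since $\dbE[\sup_{s\in[0,T]}|\lambda(s)|^2+\int_0^T|\eta(r)|^2 dr]<\infty$ by \autoref{Ad-uniquee} gives a uniform-in-$t_0$ bound.

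For the two quadratic $\sigma$ terms I would apply the same H\"older splitting as in Step 2 of \autoref{est-I}:
\begin{equation*}
\dbE\Big[\int_{t_0}^{t_0+\e}\{\Delta^{1*}\sigma(s)^\top|\mathcal{P}(s)|\Delta^{1*}\sigma(s)\}^\beta ds\Big] \leq \big\||\Delta^{1*}\sigma|^{2\beta}\big\|_{S^{2/(2-\beta)}(0,T;\dbR)}\Big\{\dbE\int_{t_0}^{t_0+\e}|\mathcal{P}(s)|^2 ds\Big\}^{\beta/2}\e^{(2-\beta)/2}.
\end{equation*}
The Lipschitz and growth conditions in \autoref{A1} control $|\Delta^{1*}\sigma|$ by $K(1+|u|+|u^*|+|\mu^*|)$, which lies in $S^p$ for all $p\geq 1$, and the bound $\sup_s\dbE|\mathcal{P}(s)|^2<\infty$ from \eqref{P} turns the middle factor into $O(\e^{\beta/2})$. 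Multiplying through gives $O(\e)$, and the $\Delta^{2*}\sigma$ piece is identical up to a time shift.

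Taking $E_2^\beta$ to be the union of the (finitely many) Lebesgue-differentiation null sets produced above then yields the claim. I do not anticipate a real obstacle: the only point that requires any care is making sure that the Lebesgue differentiation theorem is applied to a fixed countable family of integrable functions of $s$ (built from $\dbE|q(s)|^2$, $\dbE|p(s)|^2$, and the analogous quantities in the $\Delta\tilde G$ piece), so that a single null set $E_2^\beta$ works simultaneously for every choice of perturbation direction $u$.
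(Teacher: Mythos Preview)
Your proposal is correct and follows essentially the same route as the paper, which simply states that the result follows ``by a similar deduction of step 1 and step 2 in \autoref{est-I}''. You have accurately identified the two simplifications (no $\tilde{\Gamma}^{-\beta}$ weight, no outer H\"older step producing $\e^{\beta-1}$) and carried out the relevant estimates; the only minor imprecision is that $|\Delta^{1*}\sigma|$ is also bounded in terms of $|x^*|,|x_\delta^*|,|\tilde{x}^*|$ in addition to the control variables, but these lie in every $S^p$ by \autoref{thm2.1}, so the argument is unaffected.
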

Now we give the estimate for $y^\e - y^*$. Before we present it, we need to give some notations.
Set 
$
\big(\hat{x}^\e(t), \hat{x}_\delta^\e(t), \hat{\tilde{x}}^\e(t), \hat{y}^\e(t), \hat{z}^\e(t)\big) 
\triangleq
\big(x^\e(t) - x^*(t), x^\e_\delta(t) - x^*_\delta(t), \tilde{x}^\e(t) - \tilde{x}^*(t), y^\e(t) - y^*(t), z^\e(t) - z^*(t) \big),
\Delta X^\e(t)
\triangleq
\big(x^\e(t) - x^*(t), x^\e_\delta(t) - x^*_\delta(t), \tilde{x}^\e(t) - \tilde{x}^*(t) \big),
\Theta(t) \triangleq \big(x^*(t), x_\delta^*(t), \tilde{x}^*(t)\big),
\Theta^\e(t) \triangleq \big(x^\e(t), x_\delta^\e(t), \tilde{x}^\e(t)\big).$
By the definition of $y^\e(t)$ and $y^*(t)$, we have 
\begin{equation}\label{t1}
\begin{aligned}
\hat{y}^\e(t) =& \bar{H}^{'\e} \Delta X^\e(T) + \int_{t}^{T} \Big\{\bar{F}^{'\e}(s) \Delta X^\e(s) + f^{'\e}_y(s) \hat{y}^\e(s) + f^{'\e}_z(s)\hat{z}^\e(s)\\
&\q\q\q\q\q\q\q\q\q+ \Delta f(s)\Big\}ds - \int_{t}^{T}\hat{z}^\e(s)dW(s), 
\end{aligned}
\end{equation}
where
\begin{equation*} 
\bar{H}^{'\e} \triangleq	
\begin{bmatrix}
h^{'\e}_x(T) & h^{'\e}_{x_\delta}(T) & h^{'\e}_{\tilde{x}}(T) 
\end{bmatrix},\q
\bar{F}^{'\e}(s) \triangleq 
\begin{bmatrix}
f^{'\e}_{x}(s) 
& f^{'\e}_{ x_\delta}(s) 
& f^{'\e}_{\tilde{x}}(s)
\end{bmatrix},\,\,\,\,\,\,\,\,\,\,\,\,\,\,\,\,\,\,\,\,\,\,\,\,\,\,\,\,\,\,\,\,\,\,\,\,\,\,\,\,\,\,\,\,\,\,\,\,\,\,\,\,\,\,\,\,\,\,\,\,\,\,\,\,\,\,\,\,\,\,\,\,\,\,\,\,\,\,\,\,
\end{equation*}
\begin{align*}
&h^{'\e}_x(T) \triangleq \int_{0}^{1}h_x \big( \Theta(T)+\theta (\Theta^\e(T) - \Theta(T))\big)d\theta,\\
&f^{'\e}_x(s) \triangleq \int_{0}^{1}f_x \big( \Theta(s)+\theta (\Theta^\e(s) - \Theta(s)), y^*(s) + \theta \hat{y}^\e(s), z^*(s) + \theta \hat{z}^\e(s), u^\e(s), \mu^\e(s)\big)d\theta,\,\,\,\,\,\,\,\,\,\,\,\,\,\,\,\,\,\,\,\,\,\,\,\,\,\,\,\,\,\,\,\,\,\,\,\,\,\,\,\,\,\,\,\,\,\,\,\,\,\,\,\,\,\,\,\,\,\,\,\,\,\,\,\,\,\,\,\,\,\,\,\,\,\,\,\,\,\,\,\,
\end{align*}
and $h^{'\e}_{x_\delta}(T), h^{'\e}_{\tilde{x}}(T),  
f^{'\e}_{ x_\delta}(s),  f^{'\e}_{\tilde{x}}(s),  
f^{'\e}_{ y}(s),  
f^{'\e}_{z}(s)$ are defined similarly.
\begin{proposition}\label{y11}
Let \autoref{A1} hold. Then, 
for $\beta> 1$, 
\begin{align}\label{y1}
\dbE\Big[ \mathop{\sup}\limits_{t\in [0, T]} |\hat{y}^\e(t)|^{\beta} + \Big(\int_{0}^{T}|\hat{z}^\e(t)|^2dt\Big)^{\frac{\beta}{2}}\Big] = O(\e^\frac{\beta}{2}).
\end{align}
\end{proposition}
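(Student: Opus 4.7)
The plan is to view \eqref{t1} as a linear BSDE for the unknowns $(\hat y^\e, \hat z^\e)$, with bounded Lipschitz coefficients $f^{'\e}_y, f^{'\e}_z$ (which are bounded since $\partial f$ is bounded by \autoref{A1}(iii)), terminal value $\bar H^{'\e}\Delta X^\e(T)$, and non-homogeneous driver $\bar F^{'\e}(s)\Delta X^\e(s)+\Delta f(s)$. Apply \autoref{prop3.1} to obtain, for every $\beta>1$,
\begin{equation*}
\dbE\Big[\mathop{\sup}\limits_{t\in[0,T]}|\hat y^\e(t)|^\beta+\Big(\int_0^T|\hat z^\e(t)|^2dt\Big)^{\beta/2}\Big]
\le K\,\dbE\Big[|\bar H^{'\e}\Delta X^\e(T)|^\beta+\Big(\int_0^T\big(|\bar F^{'\e}(s)\Delta X^\e(s)|+|\Delta f(s)|\big)ds\Big)^\beta\Big].
\end{equation*}
The task then reduces to showing each of the three terms on the right is $O(\e^{\beta/2})$.

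First, since $\partial^2 h$ is bounded, $\partial h$ has at most linear growth in $(x,x_\delta,\tilde x)$, so $|\bar H^{'\e}|\le K(1+|\Theta(T)|+|\Theta^\e(T)|)$. By Cauchy--Schwarz,
\begin{equation*}
\dbE[|\bar H^{'\e}\Delta X^\e(T)|^\beta]\le K\,\dbE\big[(1+|\Theta(T)|+|\Theta^\e(T)|)^{2\beta}\big]^{1/2}\,\dbE[|\Delta X^\e(T)|^{2\beta}]^{1/2}.
\end{equation*}
\autoref{thm2.1} bounds the first factor uniformly in $\e$, and \eqref{3.6} in \autoref{state} gives $\dbE[|\Delta X^\e(T)|^{2\beta}]=O(\e^\beta)$, so this contribution is $O(\e^{\beta/2})$. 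Next, $\bar F^{'\e}(s)$ is uniformly bounded by \autoref{A1}(iii), and Jensen plus \eqref{3.6} yield
\begin{equation*}
\dbE\Big[\Big(\int_0^T|\bar F^{'\e}(s)\Delta X^\e(s)|ds\Big)^\beta\Big]
\le K\,\dbE\Big[\mathop{\sup}_{s\in[0,T]}|\Delta X^\e(s)|^\beta\Big]=O(\e^{\beta/2}).
\end{equation*}

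The remaining term $\Delta f(s)$ is supported on $E_\e\triangleq[t_0,t_0+\e]\cup[t_0+\delta,t_0+\delta+\e]$, which has Lebesgue measure $\le 2\e$, and by the growth of $f$ inherited from \autoref{A1}(iii),
\begin{equation*}
|\Delta f(s)|\le K\big(1+|x^*(s)|+|x_\delta^*(s)|+|\tilde x^*(s)|+|y^*(s)|+|z^*(s)|+|u(s)|+|u^*(s)|+|\mu^*(s)|\big)\mathbf 1_{E_\e}(s).
\end{equation*}
By H\"older's inequality applied on $E_\e$,
\begin{equation*}
\dbE\Big[\Big(\int_0^T|\Delta f(s)|ds\Big)^\beta\Big]\le K\e^{\beta-1}\,\dbE\Big[\int_{E_\e}\big(1+|x^*(s)|^\beta+\cdots+|u(s)|^\beta+|u^*(s)|^\beta+|\mu^*(s)|^\beta\big)ds\Big]=O(\e^\beta),
\end{equation*}
where we used $\dbE[\sup_{s}|x^*(s)|^p]<\infty$ from \autoref{thm2.1}, $\dbE[\sup_s(|u|+|u^*|+|\mu^*|)^p]<\infty$ for every $p\ge 1$ from the definition of $\mathcal U_{ad}$, and $\dbE[\int_0^T|z^*|^{2\beta}ds]<\infty$ handled with one more Cauchy--Schwarz if needed. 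Since $O(\e^\beta)\subset O(\e^{\beta/2})$ for $\beta\ge 1$, combining the three bounds completes the proof.

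The only mildly delicate point is the terminal term, where the possibly linearly-growing $\partial h$ forces a Cauchy--Schwarz split rather than a direct bound; the $\Delta f$ term, although contributing a better rate $O(\e^\beta)$, must be handled carefully because $z^*$ is only $L^{2\beta}(\Om\times[0,T])$ and a separate moment inequality is needed on the segment of length $\e$.
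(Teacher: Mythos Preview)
Your approach is essentially identical to the paper's: apply \autoref{prop3.1} to the linear BSDE \eqref{t1}, then bound the terminal term via Cauchy--Schwarz (using linear growth of $\partial h$ and \eqref{3.6}), the $\bar F^{'\e}$-term directly via boundedness of $\partial f$ and \eqref{3.6}, and the $\Delta f$-term using its small support. One correction: the claim $\dbE\big[\int_0^T|z^*|^{2\beta}\,ds\big]<\infty$ is not what \autoref{thm2.1} gives (it gives $z^*\in\mathcal M_\dbF^{2,\beta}$, i.e.\ $\dbE[(\int_0^T|z^*|^2)^{\beta/2}]<\infty$), so the $z^*$-contribution should be estimated via Cauchy--Schwarz in time,
\[
\Big(\int_{E_\e}|z^*(s)|\,ds\Big)^\beta\le (2\e)^{\beta/2}\Big(\int_0^T|z^*(s)|^2\,ds\Big)^{\beta/2},
\]
yielding $O(\e^{\beta/2})$ rather than $O(\e^\beta)$ for that piece---exactly as in the paper's final displayed bound, and still sufficient for \eqref{y1}.
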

\begin{proof}
Note that by Tayor's expansion and \autoref{A1}, we deduce that 
\begin{align*}
h^{'\e}_x(T) = \int_{0}^{1}h_x \big( \Theta(T)+\theta (\Theta^\e(T) - \Theta(T))\big)d\theta \leq |h_x(0,0,0)| + K\big\{|\Theta(T)|+| \Theta^\e(T) - \Theta(T)|\big\}.
\end{align*}
Thus, by \autoref{thm2.1} and \autoref{state}, we have that for any $\beta>1$, 
$
\dbE\Big[|\bar{H}^{'\e }|^{\beta}\Big] \leq K.
$
It follows from \autoref{A1}, \autoref{prop3.1}, \autoref{state}, and \autoref{thm2.1} that  for $\beta>1$,
\newpage
\begin{align*}
&\dbE\bigg[\mathop{\sup}\limits_{t\in [0, T]}|\hat{y}^\e(t)|^{\beta} + \Big(\int_{0}^{T}|\hat{z}^\e(t)|^2dt\Big)^{\frac{\beta}{2}}\bigg] \\
&\leq K \dbE\Big[|\bar{H}^{'\e} \Delta X^\e(T)|^{\beta} + \Big(\int_{0}^{T}\Big|\bar{F}^{'\e}(t) \Delta X^\e(t) + \Delta f(t)\Big|dt\Big)^{\beta}\Big]\\
&\leq  K \bigg\{\Big(\dbE\Big[|\bar{H}^{'\e} |^{2\beta}\Big]\Big)^\frac{1}{2}\Big(\dbE\Big[|\Delta X^\e(T) |^{2\beta}\Big]\Big)^\frac{1}{2} 
+ \dbE\Big[\mathop{\sup}\limits_{t\in [0, T]}| \Delta X^\e(t)|^{\beta}\Big]
+ \dbE\Big[\Big(\int_{0}^{T}| \Delta f(t)|dt\Big)^{\beta}\Big]\bigg\}\\
&\leq K \e^{\frac{\beta}{2}} \bigg\{ 1+ \mathop{\sup}\limits_{t\in [0, T]}\dbE[|u(t)|^\beta+|u^*(t)|^\beta] + \dbE\Big[\mathop{\sup}\limits_{t\in [0, T]}\Big(| x^*(t)|^{\beta}+| x_{\delta}^*(t)|^{\beta}+| \tilde{x}^*(t)|^{\beta}\Big)\Big]\\
&\q\q\q\q+ \dbE\Big[\Big(\int_{0}^{T}(|z^*(t)|)^2dt\Big)^{\frac{\beta}{2}}\Big]\bigg\}                   
\leq K\e^{\frac{\beta}{2}}.
\end{align*}
This completes the proof of \autoref{y11}.
\end{proof}
\begin{remark}
We attempt to prove  that, for any $\beta > 1$,
$
\dbE\Big[ \mathop{\sup}\limits_{t\in [0, T]} |\hat{y}^\e(t)|^{\beta} + \Big(\int_{0}^{T}|\hat{z}^\e(t)|^2dt\Big)^{\frac{\beta}{2}}\Big] = o(\e^{\frac{\beta}{2}}),
$
which is a stronger result than that in \autoref{y11}.
However, due to the form of the duality relationship (similar to \autoref{non-recursive}) is $\dbE[\cd] = o(\e)$, not the form of $\dbE[|\cd|^\beta] = o(\e), \beta>1$, we fail to prove it, even with a second-order Tayor expansion, which poses significant difficulties for us.
\end{remark}
\subsection{The duality relationship}
In this subsection, we introduce two auxiliary equations and then derive the duality relationship between the auxilary equations and the variational equations \eqref{X1}-\eqref{X2}, where the main ideas of the proofs come from  Meng--Shi--Wang--Zhang \cite{meng2025general}.
Now we introduce the following auxiliary equations:
\begin{equation}\label{adjoint}\left\{\begin{aligned}
			&(a)\q \lambda^\e(t) = \big(\tilde{\Gamma}^\e(T)\bar{H}\big)^\top - \sum_{i=1}^{d}\int_{t}^{T}\nu^{\e i}(s)dW^i(s), \q t\in [0, T];\\
&(b)\q \eta^\e(t) = \big(\tilde{\Gamma}^\e(t)\bar{F}(t)\big)^\top + A(T, t)^\top\big(\tilde{\Gamma}^\e(T)\bar{H}\big)^\top+ \sum_{i=1}^{d} C^i(T,t)^\top \nu^{\e i}(t) \\
%
&\q\q\q\q\q+ \int_{t}^{T}\Big[A(s,t)^\top \eta^\e(s)+ \sum_{i=1}^{d}C^i(s,t)^\top \zeta^{\e i}(s,t)\Big]ds - \sum_{i=1}^{d}\int_{t}^{T}\zeta^{\e i}(t,s)dW^i(s), \q t\in [0, T];\\
&(c)\q \eta^\e(t) = \dbE[\eta^\e(t)] + \sum_{i=1}^{d} \int_{0}^{t} \zeta^{\e i}(t,s)dW^i(s),\q t\in [0, T]
\end{aligned}\right.\end{equation}
and
\begin{equation}\label{Adjoint}\left\{\begin{aligned}
&(a)\q P^\e_1(r) = \big(\tilde{\Gamma}^\e(T)H\big)^\top - \sum_{i=1}^{d}\int_{r}^{T}Q^{\e i}_1(\theta)dW^i(\theta), \q 0\leq r \leq T;\\
&(b)\q P^\e_2(r) =  A(T, r)^\top P^\e_1(r) + \sum_{i=1}^{d}C^i(T,r)^\top Q_1^{\e i}(r) \\
&\q\q\q\q\q+ \int_{r}^{T}\Big[A(\theta,r)^\top P^\e_2(\theta)+ \sum_{i=1}^{d}C^i(\theta,r)^\top Q_2^{\e i}(\theta,r)\Big]d\theta - \sum_{i=1}^{d}\int_{r}^{T}Q_2^{\e i}(r,\theta)dW^i(\theta), \q 0\leq r \leq T;\\
&(c) \q P^\e_3(r) = \partial^2 G^\e(r)   +  \sum_{i=1}^{d}C^i(T,r)^\top P^\e_1(r) C^i(T,r) \\
&\q\q\q+\sum_{i=1}^{d}\int_{r}^{T}\Big[C^i(T,r)^\top P^\e_2(\theta)^{\top} C^i(\theta,r) + C^i(\theta,r)^\top P^\e_2(\theta) C^i(T,r) + C^i(\theta,r)^\top P^\e_3(\theta) C^i(\theta,r)\Big]d\theta \\
&\q\q\q+\int_{r}^{T}\int_{r}^{T}C^i(\theta,r)^\top P^\e_4(\theta', \theta)C^i(\theta',r)d\theta d\theta'
-\sum_{i=1}^{d}\int_{r}^{T}Q_3^{\e i}(r,\theta)dW^i(\theta), \q 0\leq r \leq T;\\
&(d) \q P^\e_4(\theta, r) = A(T,r)^\top P^\e_2(\theta)^\top + \sum_{i=1}^{d}C^i(T,r)^\top Q^{\e i}_2(\theta,r)^\top + A(\theta,r)^\top P^\e_3(\theta) + \sum_{i=1}^{d}C^i(\theta, r)^\top Q^{\e i}_3(\theta,r) \\
&\q\q\q\q + \int_{r}^{T}\Big[\sum_{i=1}^{d}C^i(\theta',r)^\top Q_4^{\e i}(\theta, \theta', r) + A(\theta', r)^\top P^\e_4(\theta,\theta')\Big]d\theta' - \sum_{i=1}^{d}\int_{r}^{T}Q^{\e i}_4(\theta,r,\theta')dW^i(\theta'), \\
&\q\q\q\q\q\q\q\q\q\q\q\q\q\q\q\q\q\q\q\q\q\q\q\q\q\q\q\q\q\q\q\q\q\q\q\q0\leq r \leq \theta \leq T;\\
&(e)\q P^\e_4(\theta,r) = P^\e_4(r,\theta)^\top, \q Q^\e_4(\theta, r, \theta') = Q^\e_4(r,\theta, \theta')^\top, \q 0\leq \theta \leq r\leq T,
\end{aligned}\right.\end{equation}
subject to the following constraints:
\begin{equation}\label{subjectee}\left\{\begin{aligned}
&(a)\q P^\e_2(r) = \dbE_{\theta}[P^\e_2(r)]+ \sum_{i=1}^{d}\int_{\theta}^{r}Q_2^{\e i}(r,\theta')dW^i(\theta'), \q 0\leq \theta \leq r \leq T;\\
&(b)\q P^\e_3(r) = \dbE_{\theta}[P^\e_3(r)]+ \sum_{i=1}^{d}\int_{\theta}^{r}Q_3^{\e i}(r,\theta')dW^i(\theta'), \q 0\leq \theta \leq r \leq T; \\
&(c) \q P^\e_4(\theta, r) = \dbE_{\theta'}[P^\e_4(\theta, r)]+ \sum_{i=1}^{d}\int_{\theta'}^{r\wedge \theta}Q_4^{\e i}(\theta, r,s)dW^i(s), \q 0\leq \theta' \leq (\theta \wedge r) \leq T, \\
\end{aligned}\right.\end{equation}
where $\bar{H}, H $ and $\bar{F}, F$ are defined in \eqref{H} and \eqref{F}, respectively,
\begin{align}
%
%
&\tilde{\Gamma}^\e(s) \triangleq \exp\Big\{\int_{0}^{s}\tilde{f}^\e_{y}(r)dr\Big\}\mathcal{E}\Big(\int_{0}^{s}\tilde{f}^\e_z(r)^\top dW(r)\Big), \q s\in [0,T],\label{gammae} \,\,\,\,\,\,\,\,\,\,\,\,\,\,\,\,\,\,\,\,\,\,\,\,\,\,\,\,\,\,\,\,\,\,\,\,\,\,\,\,\,\,\,\,\,\,\,\,\,\,\,\,\,\,\,\,\,\,\,\,\,\,\,\,\,\,\,\,\,\,
\end{align}
%
%
\begin{equation}\label{fze}
\begin{aligned}
&\tilde{f}_y^\e(s) \triangleq \int_{0}^{1}f_y\big(s, x^*(s)+x_1(s)+x_2(s),x_\delta^*(s)+x_{\delta,1}(s)1_{(\delta,\i)}(s)+x_{\delta,2}(s)1_{(\delta,\i)}(s),\tilde{x}^*(s)+\tilde{x}_1(s)\\
&\q\q\q\q\q\q +\tilde{x}_2(s), y^*(s)+\theta \hat{y}^\e(s), z^*(s) + \theta \hat{z}^\e(s),  u^*(s), \mu^*(s)\big)d\theta,\\
&\tilde{f}_z^\e(s) \triangleq \int_{0}^{1}f_z\big(s, x^*(s)+x_1(s)+x_2(s),x_\delta^*(s)+x_{\delta,1}(s)1_{(\delta, \i)}(s)+x_{\delta,2}(s)1_{(\delta,\i)}(s),\tilde{x}^*(s)+\tilde{x}_1(s)
\\
&\q\q\q\q\q\q +\tilde{x}_2(s), y^*(s)+\theta \hat{y}^\e(s), z^*(s) + \theta \hat{z}^\e(s),  u^*(s), \mu^*(s)\big)d\theta,\,\,\,\,\,\,\,\,\,\,\,\,\,\,\,\,\,\,\,\,\,\,\,\,\,\,\,\,\,\,\,\,\,\,\,\,\,\,\,\,\,\,\,\,\,\,\,\,\,\,\,\,\,\,\,\,\,\,\,
\end{aligned}
\end{equation}
\begin{equation}\label{M}
\begin{aligned}
&G^\e(s,x,x_\delta, \tilde{x},p^\e,q^\e,u, \mu) \\
&\triangleq \tilde{\Gamma}^\e(s)f(s,x,x_\delta, \tilde{x},  y^*(s),z^*(s),u, \mu) + p^{\e\top} b(s,x,x_\delta, \tilde{x}, u, \mu) + \sum_{i=1}^{d}q^{\e i\top}\sigma^i(s,x,x_\delta, \tilde{x}, u, \mu)\,\,\,\,\,\,\,\,\,\,\,\,\,\,
\end{aligned}\end{equation}
with
\begin{equation}\label{ad}\left\{\begin{aligned}
&p^\e(s) \triangleq \lambda^{\e0}(t)+\lambda^{\e1}(t)1_{[0,T-\delta)}(t)\\
%
&\q\q\q+\dbE_s\Big[\int_{s}^{T}\eta^{\e 0}(t)dt + \int_{s+\delta}^{T} \eta^{\e1}(t)dt1_{[0,T-\delta)}(s) \Big],\q0\leq s\leq T;\\
& q^{\e i}(s) \triangleq \nu^{\e0i}(t)+\nu^{\e1i}(t)1_{[0,T-\delta)}(t)\\
&\q\q\q+
\int_{s}^{T}\zeta^{\e0i}(t,s)dt + \int_{s+\delta}^{T} \zeta^{\e1i}(t,s)dt1_{[0,T-\delta)}(s), \q i =1,..., d, \q 0\leq s \leq T
\end{aligned}\right.\end{equation}
and for any $i = 1,..., d$,
\begin{equation} 
%
\lambda^\e(s) \triangleq 
\begin{bmatrix}
	&\lambda^{\e0}(s) \\
	& \lambda^{\e1}(s)\\
	& \lambda^{\e2}(s)
\end{bmatrix},\q
\nu^{\e i}(s) \triangleq
\begin{bmatrix}
	&\nu^{\e0i}(s)\\
	&\nu^{\e1i}(s)\\
	& \nu^{\e2i}(s)
\end{bmatrix},\q
\eta^\e(s) \triangleq 
\begin{bmatrix}
&\eta^{\e0}(s) \\
& \eta^{\e1}(s)\\
& \eta^{\e2}(s)
\end{bmatrix},\q
\zeta^{\e i}(s) \triangleq
\begin{bmatrix}
&\zeta^{\e0i}(s)\\
&\zeta^{\e1i}(s)\\
& \zeta^{\e2i}(s)
\end{bmatrix}, 
\end{equation}
and $\partial^2 G^\e$ is the Hessian matrice of $G^\e$ with respect to $(x, x_\delta, \tilde{x})$.

The following is a well-posedness result about auxiliary equations. 
\begin{proposition}\label{Ad-unique}
Let \autoref{A1} hold. 
Then, \eqref{adjoint} admits a unique solution 
$(\lambda^\e(\cd), \nu^\e(\cd)) \in L^{2}_{\dbF}(0, T;\dbR^{3n}) \times L^{2}_{\dbF}(0, T;\dbR^{3n\times d})$,
$(\eta^\e(\cd), \zeta^\e(\cd,\cd)) \in L^{2}_{\dbF}(0, T;\dbR^{3n}) \times L^2\big(0, T ; L^2_{\dbF}(0, T; \dbR^{3n\times d})\big)$
and \eqref{Adjoint} admits a unique solution $(P^\e_1(\cd), Q^\e_1(\cd)) \in L^{2}_{\dbF}(0, T; \dbR^{3n\times 3n}) \times \big(L^{2}_{\dbF}(0, T; \dbR^{3n\times 3n})\big)^d$, 
$P^\e_2(\cd)  \in L^{2}_{\dbF}(0, T; \dbR^{3n\times 3n}) $,
$P^\e_3(\cd) \in L^{2}_{\dbF}(0, T; \dbR^{3n\times 3n}) ,$
$P^\e_4(\cd,\cd)  \in L^2\big(0, T ; L^2_{\dbF}(0, T; \dbR^{3n\times 3n})\big) $.
Moreover, we have
\begin{equation}\label{YY}
\begin{aligned}
{\rm(i)}\q \dbE\Big[\mathop{\sup}\limits_{t\in [0, T]}|\lambda^\e(t) - \lambda(t)|^\beta + \Big(\int_{0}^{T}|\nu^\e(t) - \nu(t)|^2dt\Big)^{\frac{\beta}{2}}\Big] \leq K\dbE\Big[|\tilde{\Gamma}^\e(T) \bar{H} - \tilde{\Gamma}(T) \bar{H}|^\beta\Big],\,\,\,\,\,\,\,\,\,\,\,\,\,\,\,\,\,\,\,\,\,\,\,\,\,\,\,\,\,\,\,\,\,\,\,\,\,\,\,\,\,\,\,\,\,\,\,\,\,\,
\end{aligned}\end{equation}
\begin{equation}\label{YYY}
\begin{aligned}
&{\rm(ii)}\q \dbE\bigg[\int_{0}^{T}|\eta^\e(t)-\eta(t)|^2 dt + \int_{0}^{T}\int_{0}^{T}|\zeta^\e(t, s)-\zeta(t,s)|^2dsdt\bigg]\\ 
&\leq
K\dbE\bigg[\int_{0}^{T}\Big\{|\tilde{\Gamma}^\e(t)\bar{F}(t) - \tilde{\Gamma}(t)\bar{F}(t)|^2 + |A(T, t)|^2|\tilde{\Gamma}^\e(T)\bar{H} - \tilde{\Gamma}(T)\bar{H}|^2 + |C(T,t)|^2|\nu^\e(t) - \nu(t)|^2\Big\} dt \bigg].\,\,\,\,\,\,\,\,\,\,\,\,\,\,\,\,\,\,\,\,\,\,\,\,\,
\end{aligned}\end{equation}
\end{proposition}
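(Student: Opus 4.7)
The plan is to mirror the proof of \autoref{Ad-uniquee}, with the main additional input being uniform $L^\beta$-control of $\tilde{\Gamma}^\e$ and a comparison of the two systems via linear BSDE/BSVIE estimates.

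First, I would verify that $\tilde{\Gamma}^\e$ has the same integrability as $\tilde{\Gamma}$, uniformly in $\e$. By the definitions \eqref{fze} and \autoref{A1}(iii), the boundedness of $f_y$ and $f_z$ immediately yields $\|\tilde{f}^\e_y\|_{L^\infty_\dbF}\leq\|f_y\|_{L^\infty_\dbF}$ and $\|\tilde{f}^\e_z\|_{L^\infty_\dbF}\leq\|f_z\|_{L^\infty_\dbF}$. Repeating the Doob-inequality/Doléans-Dade computation \eqref{G1} verbatim with $\tilde{f}^\e_y,\tilde{f}^\e_z$ in place of $f_y,f_z$, I obtain $\dbE[\sup_{t\in[0,T]}|\tilde{\Gamma}^\e(t)|^\beta]\leq K$ with a constant $K$ depending only on $T,\beta,\|f_y\|_\infty,\|f_z\|_\infty$, hence uniform in $\e$. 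This secures the integrability of the terminal data $(\tilde{\Gamma}^\e(T)\bar H)^\top$, the driver term $(\tilde{\Gamma}^\e(t)\bar F(t))^\top$, and (via \eqref{Me}) of $\partial^2 G^\e$.

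With these estimates in hand, the existence and uniqueness claims follow exactly as in \autoref{Ad-uniquee}: Pardoux--Peng \cite{Peng1990} applied to \eqref{adjoint}(a) produces $(\lambda^\e,\nu^\e)$; checking the hypotheses of \autoref{BSVIE-unique} for \eqref{adjoint}(b)(c)---where the driver is linear in $(\eta^\e,\zeta^\e)$ with bounded coefficients $A(s,t),C^i(s,t)$, and the free term lies in the required space by the $L^2$ control of $\nu^\e$---yields $(\eta^\e,\zeta^\e(\cdot,\cdot))$; and the layered BSVIE argument of Wang--Yong \cite[Thm.~5.2]{wang2023spike} solves \eqref{Adjoint} sequentially for $P^\e_1,P^\e_2$, then $P^\e_3,P^\e_4$ under the constraints \eqref{subjectee}, with the symmetry in (e) preserved by uniqueness.

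For the comparison estimates, I would subtract the two systems. For \eqref{YY}, the difference $(\lambda^\e-\lambda,\nu^\e-\nu)$ solves a BSDE with zero driver and terminal value $(\tilde{\Gamma}^\e(T)\bar H-\tilde{\Gamma}(T)\bar H)^\top$, so \autoref{prop3.1} gives \eqref{YY} directly. For \eqref{YYY}, $(\eta^\e-\eta,\zeta^\e-\zeta)$ solves a linear BSVIE whose driver is
\begin{align*}
A(s,t)^\top(\eta^\e(s)-\eta(s))+\sum_{i=1}^d C^i(s,t)^\top(\zeta^{\e i}(s,t)-\zeta^i(s,t))
\end{align*}
and whose free term is
\begin{align*}
(\tilde{\Gamma}^\e(t)\bar F(t)-\tilde{\Gamma}(t)\bar F(t))^\top+A(T,t)^\top(\tilde{\Gamma}^\e(T)\bar H-\tilde{\Gamma}(T)\bar H)^\top+\sum_{i=1}^d C^i(T,t)^\top(\nu^{\e i}(t)-\nu^i(t)).
\end{align*}
Since the coefficients $A(s,t),C^i(s,t)$ are bounded by \autoref{A1}, this driver is Lipschitz in the unknowns with a constant independent of $\e$, so estimate \eqref{Y} of \autoref{BSVIE-unique} applied to this linear BSVIE gives exactly \eqref{YYY}.

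The main obstacle I anticipate is not in the routine BSDE/BSVIE comparison (steps three and four) but in the existence step for the coupled operator-valued system \eqref{Adjoint}: the equations for $P^\e_3$ and $P^\e_4$ couple through the integrals $\int_r^T\int_r^T C^i(\theta,r)^\top P^\e_4(\theta',\theta)C^i(\theta',r)\,d\theta\,d\theta'$ and must be solved jointly while maintaining the martingale constraints \eqref{subjectee} and the symmetry in (e). The argument of Wang--Yong handles this by iterating \autoref{BSVIE-unique} on an appropriate Banach space of $(P^\e_3,P^\e_4,Q^\e_3,Q^\e_4)$; verifying that the $L^\infty$ bounds on $\partial^2 G^\e$ (which depend on $\tilde{\Gamma}^\e$ and the bounded Hessians from \autoref{A1}) are strong enough to apply that contraction scheme uniformly in $\e$ is the delicate point one has to check.
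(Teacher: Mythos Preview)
Your proposal is correct and follows essentially the same approach as the paper: the paper's proof also first establishes $\dbE[\sup_{t\in[0,T]}|\tilde{\Gamma}^\e(t)|^\beta]\leq K$ via the computation \eqref{G1}, then invokes the argument of \autoref{Ad-uniquee} for existence/uniqueness, and finally obtains \eqref{YY} and \eqref{YYY} from \autoref{prop3.1} and \autoref{BSVIE-unique} respectively. Your write-up in fact spells out more of the details (the explicit form of the difference BSDE/BSVIE and its free term) than the paper does.
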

\begin{proof}
By a similar deduction in \eqref{G1}, we deduce that for $\beta > 1$, $\dbE[\mathop{\sup_{t\in [0, T]}}|\tilde{\Gamma}^\e(t)|^\beta] \leq K < \i$. It then follows from a similar deduction of \autoref{Ad-uniquee} that the existence and uniqueness result immediately. Finally, by \autoref{prop3.1} and \autoref{BSVIE-unique}, we obtain \eqref{YY} and \eqref{YYY}, respectively.
\end{proof}
In the following, we deduce the duality relationship between auxiliary equations and \eqref{X1}-\eqref{X2}, which will be used in \autoref{estimate-last}. 
%
%
%
Recall that $t_0$ is the left endpoint for the small interval of perturbation. 
\begin{lemma}\label{non-recursive}
Let \autoref{A1} hold. Then,
we have that there exists a set $E_3$ such that $m(E_3) = 0$ and for any $t_0 \in [0, T] \backslash E_3$ and $u \in \mathcal{U}_{ad}$,
\begin{equation}\label{3.29}
\begin{aligned}
&\dbE\Big[\tilde{\Gamma}^\e(T)\bar{H}\big(X_1(T) +X_2(T)\big) + \frac{1}{2}X_1(T)^\top \tilde{\Gamma}^\e(T)H X_1(T)+\int_{0}^{T}\Big\{\tilde{\Gamma}^\e(s)\bar{F}(s)\big(X_1(s) +X_2(s)\big)\\
& \q + \frac{1}{2}X_1(s)^\top  \tilde{\Gamma}^\e(s)F(s) X_1(s) + \tilde{\Gamma}^\e(s)\Delta f(s)\Big\}ds - \int_{0}^{T}I^\e(s)ds \Big]   =
 o(\e),
\end{aligned}\end{equation}
where 
\begin{equation}\label{K}
\begin{aligned}
&\Delta G^\e(t) \triangleq G^\e(t,x^*(t), x_\delta^*(t), \tilde{x}^*(t), y^*(t), z^*(t), p^\e(t), q^\e(t), u(t), \mu^*(t) ) \\
&\q\q\q\q- G^\e(t,x^*(t), x_\delta^*(t), \tilde{x}^*(t) , y^*(t), z^*(t), p^\e(t), q^\e(t), u^*(t), \mu^*(t) ), \\
&\Delta \tilde{G}^\e(t) \triangleq G^\e(t,x^*(t), x_\delta^*(t), \tilde{x}^*(t),y^*(t), z^*(t), p^\e(t), q^\e(t), u^*(t), \mu(t) ) \\
&\q\q\q\q- G^\e(t,x^*(t), x_\delta^*(t), \tilde{x}^*(t),y^*(t), z^*(t), p^\e(t), q^\e(t), u^*(t), \mu^*(t) ),\\
&I^\e(s) \triangleq \Delta G^\e(s) 1_{[t_0, t_0+\e]}(s) + \Delta \tilde{G}^\e(s)1_{[t_0+\delta, t_0+\delta+\e]}(s) 1_{[0, T-\delta]}(t_0) \\
&\q+ \frac{1}{2}\sum_{i=1}^{d} \Delta^{1*} \sigma^i(s)^\top \mathcal{P}^\e(s) \Delta^{1*} \sigma^i(s)1_{[t_0, t_0+\e]}(s) 
+ \frac{1}{2}\sum_{i=1}^{d} \Delta^{2*} \sigma^i(s)^\top \mathcal{P}^\e(s) \Delta^{2*} \sigma^i(s)1_{[t_0+\delta, t_0+\delta+\e]}(s),\\
%
\end{aligned}
\end{equation}
$G^\e$ is defined in \eqref{M}, $\Delta^{1*} \sigma^i(s), \Delta^{2*} \sigma^i(s)$ is defined in \eqref{delta_sigma}, $\mathcal{P}^\e(\cd)$ is defined similarly in \eqref{P1}, $P^\e_i(\cd), i=1,2,3, P^\e_4(\cd,\cd)$ are the unique solutions of  \eqref{Adjoint}, and
$(p^\e(\cd), q^\e(\cd))$ is defined in \eqref{ad} .
\end{lemma}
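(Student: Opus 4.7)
\smallskip

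The plan is to establish the identity \eqref{3.29} by applying an SVIE/BSVIE duality argument in two layers: a first-order duality pairing between the forward variational equations \eqref{X1}--\eqref{X2} for $X_1,X_2$ and the BSVIE-type auxiliary system $(\lambda^\e,\nu^\e,\eta^\e,\zeta^\e)$ in \eqref{adjoint}, and a second-order duality pairing between the quadratic tensor $X_1 X_1^{\top}$ and the four-index auxiliary system $(P^\e_1,P^\e_2,P^\e_3,P^\e_4;Q^\e_1,\dots,Q^\e_4)$ in \eqref{Adjoint} with its constraints \eqref{subjectee}. Because no It\^o formula is available for SVIEs, I will follow the strategy of Meng--Shi--Wang--Zhang \cite{meng2025general}: compute the relevant inner products by plugging the SVIE representations of $X_1,X_2$ and $X_1X_1^\top$ into the expectations, interchange the order of integration by Fubini, and then use the martingale properties \eqref{adjoint}(c) and \eqref{subjectee} together with the BSVIE-type representations of $\eta^\e$, $P^\e_2$, $P^\e_3$, $P^\e_4$.

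First I will compute $\mathbb{E}\bigl[\tilde\Gamma^\e(T)\bar H(X_1(T)+X_2(T))+\int_0^T \tilde\Gamma^\e(s)\bar F(s)(X_1(s)+X_2(s))\,ds\bigr]$. Substituting \eqref{X1}--\eqref{X2}, applying Fubini, and using the terminal identity $\lambda^\e(T)=(\tilde\Gamma^\e(T)\bar H)^{\top}$ and the defining BSVIE \eqref{adjoint}(b) for $\eta^\e$, the kernels $A(T,\cdot),C^i(T,\cdot)$ and $A(s,\cdot),C^i(s,\cdot)$ get absorbed into $\eta^\e$ and $\zeta^\e$. After collecting terms this produces an expression in $B,D^i,\bar B,\bar D^i$ paired against $(p^\e,q^\e)$, which, by the definitions \eqref{ad} of $p^\e, q^\e$ and the structure of $\tilde f_y^\e,\tilde f_z^\e$ appearing in $\tilde\Gamma^\e$, is precisely the "first-order Hamiltonian" part $\tilde\Gamma^\e\Delta f+\Delta G^\e$ contribution to $I^\e$, up to the purely quadratic second-order pieces. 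The key vanishing in this step is $\mathbb{E}[X_1(r)^{\top}\{\int_r^T Q^\e_2(r,\theta)\,dW(\theta)\}]=0$, which only holds \emph{inside the expectation}; this is exactly the reason \autoref{non-recursive} is stated in the form $\mathbb{E}[\cdot]=o(\e)$ rather than pathwise, as emphasized in the introduction.

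Second, I will treat the term $\frac12\mathbb{E}\bigl[X_1(T)^{\top}\tilde\Gamma^\e(T)HX_1(T)+\int_0^T X_1(s)^{\top}\tilde\Gamma^\e(s)F(s)X_1(s)\,ds\bigr]$. Here I plug \eqref{X1} into each copy of $X_1$, expand the resulting quadratic expression, and apply Fubini together with the four-index BSVIE \eqref{Adjoint}(b)--(d) and the symmetry \eqref{Adjoint}(e). The diagonal "$B\cdot B$-type" and "$D^i\cdot D^i$-type" terms contribute spike quantities of order $\e$; mixed terms pairing a single $B$ or $D^i$ with $X_1$ get transferred onto $P^\e_2$, $P^\e_3$ via Fubini and the representation identities \eqref{subjectee}, and cross-$W^i$-$W^j$ contributions integrate out by It\^o isometry against $P^\e_4$. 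The construction of \eqref{Adjoint} is engineered so that all $X_1$-linear contributions cancel; the surviving terms localized on $[t_0,t_0+\e]\cup[t_0+\delta,t_0+\delta+\e]$ recombine into $\tfrac12\sum_i \Delta^{j*}\sigma^i(s)^{\top}\mathcal{P}^\e(s)\Delta^{j*}\sigma^i(s)\mathbf 1_{E_j}(s)$, i.e.\ precisely the quadratic portion of $I^\e$ in \eqref{K}.

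The residual terms fall into three families: (a) double spike integrals $\int_{t_0}^{t_0+\e}\int_{t_0}^{t_0+\e}(\cdots)\,ds\,dr$, which are $O(\e^2)=o(\e)$ by boundedness of the kernels and $L^\beta$-estimates for $P^\e_i$; (b) cross spike/nonspike integrals where Lebesgue's differentiation theorem (\autoref{Lebes}) is applied outside a null set of $t_0$ to replace an $\e^{-1}$ average by a pointwise value, yielding an $o(\e)$ remainder; (c) Taylor remainders of $b,\sigma,f,h$ at orders $\geq 3$ in $X_1+X_2$, controlled by \eqref{11},\eqref{3.9},\eqref{3.} and $L^\beta$-bounds on $P^\e_i$ from \autoref{Ad-unique}. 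Taking $E_3$ to be the union of the null sets arising from (b) across the finitely many factors handled this way finishes the argument. The main technical obstacle is the bookkeeping in the second-order duality step, especially verifying that every pairing with $\int Q_j^\e dW$ vanishes only after the outer expectation is taken; this is the substantive difference from Hu \cite{Hu}'s pathwise It\^o argument and forces the $\mathbb{E}[\cdot]=o(\e)$ formulation of \autoref{non-recursive}.
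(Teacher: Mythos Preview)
Your proposal follows essentially the same route as the paper: both invoke the SVIE/BSVIE duality machinery of Meng--Shi--Wang--Zhang \cite{meng2025general} for the first- and second-order pairings, and the paper in fact refers almost the entire computation to \cite[eq.\ (3.21), pp.\ 185--196]{meng2025general}, after formally replacing $h$ by $\tilde\Gamma^\e(T)h$ and the running cost $l$ by $\tilde\Gamma^\e(\cdot)f(\cdot,\ldots,y^*,z^*,\cdot,\cdot)$.

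There is one subtlety in your category (b) that you gloss over and that the paper isolates as the only genuinely new point (see the remark immediately following the lemma). The auxiliary system \eqref{adjoint} depends on $\e$ through $\tilde\Gamma^\e$, so residual integrands such as $\int_{s+\delta}^T \zeta^{\e 1i}(t,s)\,dt$, which appear when you pair $\Delta\sigma^i_{\tilde x}(s)\tilde x_1(s)$ against the adjoint in the duality expansion, are themselves $\e$-dependent; \autoref{Lebes} therefore cannot be applied to them directly, since the Lebesgue-point set would a priori vary with $\e$ and there would be no fixed null set $E_3$. The paper's fix is to split $\zeta^{\e 1i}=(\zeta^{\e 1i}-\zeta^{1i})+\zeta^{1i}$: the difference is controlled uniformly in $t_0$ by the BSVIE stability estimate \eqref{YYY} combined with $\dbE\big[|\tilde\Gamma^\e(t)-\tilde\Gamma(t)|^2\big]=O(\e)$ (established in \eqref{Gamma}), while the $\e$-independent piece $\zeta^{1i}$ is then amenable to Lebesgue differentiation in $t_0$, producing the fixed null set $E_3$. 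Once you insert this splitting into your step (b), your outline matches the paper's proof.
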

%
\begin{proof}
%
We note that by the fact that $u \in \mathcal{U}_{ad}$, \autoref{A1}, \autoref{state}, and \autoref{Lebes}, we deduce that
\begin{equation}\label{main}
\begin{aligned}
&\Big|\dbE \Big[\int_0^T \langle \Delta \sigma^i_{\tilde{x}}(s) \tilde{x}_1(s), \int_{s+\delta}^T \zeta^{\e1i}(t,s)dt 1_{[0, T-\delta)}(s)\rangle ds\Big]\Big| \\
& \leq K \bigg\{ \dbE\Big[\int_{t_0}^{t_0+\e}\int_{s+\delta}^{T} \big|\zeta^{\e1i}(t,s)\big|^2dtds\Big]\bigg\}^{\frac{1}{2}} \e^{\frac{1}{2}} \bigg\{\dbE\Big[\mathop{\sup}_{s\in [0, T]} |\tilde{x}(s)|^2\Big]\bigg\}^{\frac{1}{2}}\\
& \leq K\e \bigg\{ \dbE\Big[\int_{t_0}^{t_0+\e}\int_{s+\delta}^{T} \big|\zeta^{\e1i}(t,s)\big|^2dtds\Big]\bigg\}^{\frac{1}{2}}\\
&\leq K\e\bigg\{ \dbE\Big[\int_{0}^{T}\int_{0}^{T} \big|\zeta^{\e1i}(t,s)-\zeta^{1i}(t,s)\big|^2dtds\Big] + \dbE\Big[\int_{t_0}^{t_0+\e}\int_{0}^{T} \big|\zeta^{1i}(t,s)\big|^2dtds\Big] \bigg\}^{\frac{1}{2}}\\
&\leq K\e \bigg\{\dbE\Big[|\tilde{\Gamma}^\e(T) - \tilde{\Gamma}(T)|^2 + \int_{0}^{T}|\tilde{\Gamma}^\e(s) - \tilde{\Gamma}(s)|^2 ds\Big] + \dbE\Big[\int_{t_0}^{t_0+\e}\int_{0}^{T} \big|\zeta^{1i}(t,s)\big|^2dtds\Big] \bigg\}^{\frac{1}{2}}.
\end{aligned}\end{equation}
For the term $\dbE\Big[|\tilde{\Gamma}^\e(T) - \tilde{\Gamma}(T)|^2 + \int_{0}^{T}|\tilde{\Gamma}^\e(s) - \tilde{\Gamma}(s)|^2 ds\Big]$, note the fact that $|e^a - e^b| \leq \big(e^{|a|} + e^{|b|}\big) |a-b|$, for any $a, b \in \dbR$. By the definition of $\tilde{\Gamma}^\e $, $\tilde{\Gamma}  $ and \autoref{A1}, we deduce that for any $\beta > 1$,

\begin{align*}\label{3.59}
	&|\tilde{\Gamma}^\e(t) - \tilde{\Gamma}(t)|^\beta\\
	&\leq K\exp\Big\{\int_{0}^{t} \beta\tilde{f}^\e_y(s) ds\Big\} \cd  
	\Big|\exp\Big\{ \int_{0}^{t} \tilde{f}^\e_z(s)^\top  dW(s) - \frac{1}{2} \int_{0}^{t}|\tilde{f}^\e_z(s) |^2ds \Big\}
	\\
	&\q - \exp\Big\{ \int_{0}^{t} f_z(s)^\top dW(s) - \frac{1}{2} \int_{0}^{t}|f_z(s)|^2ds \Big\}\Big|^\beta 
	+ K\exp\Big\{ \int_{0}^{t} \beta f_z(s)^\top dW(s) - \frac{\beta}{2} \int_{0}^{t}|f_z(s)|^2ds \Big\}\cd\\
	&\q\Big|\exp\Big\{\int_{0}^{t} \tilde{f}^\e_y(s) ds\Big\} - \exp\Big\{\int_{0}^{t} f_y(s) ds\Big\}\Big|^\beta\\
	&\leq 
	K\bigg( \exp \Big\{\beta\Big|\int_{0}^{t} \tilde{f}^\e_z(s)^\top dW(s) - \frac{1}{2} \int_{0}^{t}|\tilde{f}^\e_z(s)|^2ds\Big|\Big\} 
	+ \exp \Big\{\beta\Big|\int_{0}^{t} f_z(s)^\top dW(s) - \frac{1}{2} \int_{0}^{t}|f_z(s)|^2ds\Big| 
	\Big\}\bigg)\\
	& \q\cd\Big|\int_{0}^{t} \big\{\tilde{f}^\e_z(s)-  f_z(s)\big\}^\top dW(s) -
	\frac{1}{2} \int_{0}^{t}\{|\tilde{f}^\e_z(s)|^2  -  |f_z(s)|^2\}ds\Big|^\beta  \\
	& \q+ K\exp \Big\{\beta\int_{0}^{t} f_z(s)^\top dW(s) - \frac{\beta}{2} \int_{0}^{t}|f_z(s)|^2ds\Big\}\cd
	\Big|\int_{0}^{t}\{\tilde{f}^\e_y(s) -  f_y(s)\}ds\Big|^\beta, \q t\in [0, T].
\end{align*}
It then follows from H\"{o}lder's inequality and the fact that $e^{|a|}  \leq e^a + e^{-a}$, for any $a\in \dbR$ that
\begin{equation}\label{delta-G}
	\begin{aligned}
		&\dbE\Big[|\tilde{\Gamma}^\e(t) - \tilde{\Gamma}(t)|^\beta\Big]\\
		&\leq K  \Big\{\dbE\Big[\Big|\int_{0}^{t} \big\{\tilde{f}^\e_z(s)-  f_z(s)\big\}^\top dW(s)                                                                     
		- \frac{1}{2} \int_{0}^{t}\big\{|\tilde{f}^\e_z(s)|^2 -  |f_z(s)|^2\big\}ds\Big|^{2\beta}\Big]\Big\}^{\frac{1}{2}}
		\cd\{\dbE[V^\e_{11}(t) + V^\e_{12}(t)]\}^{\frac{1}{2}}\\
		&\q\q\q\q +\Big\{\dbE[V^\e_{2}(t) ]\Big\}^{\frac{1}{2}} \cd \Big\{\dbE\Big[\Big|\int_{0}^{t}\{\tilde{f}^\e_y(s) -  f_y(s)\}ds\Big|^{2\beta}\Big]\Big\}^{\frac{1}{2}}, \q t\in [0, T],
\end{aligned}\end{equation}
where
\begin{align*}
	&V^\e_{11}(t) \triangleq \exp \Big\{\int_{0}^{t}2\beta \tilde{f}^\e_z(s)^\top dW(s) -  \int_{0}^{t}\beta|\tilde{f}^\e_z(s)|^2ds \Big\} 
	+\exp \Big\{\int_{0}^{t}-2\beta\tilde{f}^\e_z(s)^\top dW(s) +  \int_{0}^{t}\beta|\tilde{f}^\e_z(s)|^2ds \Big\} ,\\
	&V^\e_{12}(t) \triangleq 
	\exp \Big\{\int_{0}^{t} 2\beta f_z(s) dW(s) -  \int_{0}^{t}\beta|f_z(s)|^2ds
	\Big\}
	+ \exp \Big\{\int_{0}^{t} -2\beta f_z(s) dW(s) +  \int_{0}^{t}\beta|f_z(s)|^2ds
	\Big\},\\
	& V^\e_2(t) \triangleq \exp \Big\{\int_{0}^{t}2\beta f_z(s) dW(s) -  \int_{0}^{t}\beta|f_z(s)|^2ds\Big\}.
\end{align*}
By \autoref{A1} and the fact that $\bigg(\exp \Big\{\int_{0}^{t}2\beta \tilde{f}^\e_z(s)^\top dW(s) -  \int_{0}^{t}2\beta^2|\tilde{f}^\e_z(s)|^2ds \Big\}\bigg)_{t\in [0, T]}$ is a martingale, we have
\begin{align*}
	&\dbE\Big[\exp \Big\{\int_{0}^{t}2\beta \tilde{f}^\e_z(s)^\top dW(s) -  \int_{0}^{t}\beta|\tilde{f}^\e_z(s)|^2ds \Big\}\Big] \\
	&= \dbE\Big[\exp \Big\{\int_{0}^{t}2\beta \tilde{f}^\e_z(s)^\top dW(s) -  \int_{0}^{t}2\beta^2|\tilde{f}^\e_z(s)|^2ds \Big\}
	\cd\exp\Big\{(2\beta^2 - \beta)\int_{0}^{t}|\tilde{f}^\e_z(s)|^2ds \Big\}\Big]\\
	&\leq K\dbE\Big[\exp \Big\{\int_{0}^{t}2\beta \tilde{f}^\e_z(s)^\top dW(s) -  \int_{0}^{t}2\beta^2|\tilde{f}^\e_z(s)|^2ds \Big\}\Big]= K.
\end{align*}
The other term can be deduced similarly. Thus, we deduce that $\dbE[V^\e_{11}(t) + V^\e_{12}(t)] + \dbE[V^\e_{2}(t) ] \leq K, t\in[0, T]$.
Furthermore, in view of \eqref{delta-G}, by Burkholder-Davis-Gundy's inequality, \autoref{A1}, Taylor's expansion, and \autoref{state}, we have that  for any $\beta > 1$ and $t\in [0, T]$,
\begin{equation}\label{Gamma}
	\begin{aligned}
		&\dbE\Big[|\tilde{\Gamma}^\e(t) - \tilde{\Gamma}(t)|^\beta\Big]\\
		&\leq K \Big\{\dbE\Big[\Big(\int_{0}^{T} |\tilde{f}^\e_z(s) -  f_z(s)|^2ds\Big)^\beta\Big]\Big\}^{\frac{1}{2}} 
		+ K \Big\{\dbE\Big[\Big(\int_{0}^{T} |\tilde{f}^\e_y(s) -  f_y(s)|^2ds\Big)^\beta\Big]\Big\}^{\frac{1}{2}}\\
		&\leq
		K  \Big\{\dbE\Big[\Big(\int_{0}^{T}\big\{ |x_1(s)|^2 + |x_2(s)|^2 + |x_{\delta, 1}(s)|^2+ |x_{\delta,2}(s)|^2+|\tilde{x}_1(s)|^2 + |\tilde{x}_2(s)|^2 \\
		&\q\q\q\q\q\q\q+ |\hat{y}^\e(s)|^2 + |\hat{z}^\e(s)|^2\big\} 
		 ds\Big)^\beta\Big]\Big\}^{\frac{1}{2}}
		\leq K\e^{\frac{\beta}{2}}.
\end{aligned}\end{equation}
Thus, we have 
\begin{align}\label{1111}
\dbE\Big[|\tilde{\Gamma}^\e(T) - \tilde{\Gamma}(T)|^2 + \int_{0}^{T}|\tilde{\Gamma}^\e(s) - \tilde{\Gamma}(s)|^2 ds\Big] \leq K\e.
\end{align}
For the term $\dbE\Big[\int_{t_0}^{t_0+\e}\int_{0}^{T} \big|\zeta^{1i}(t,s)\big|^2dtds\Big] $, by \autoref{Ad-uniquee}, 
we can check that $$\int_{0}^{T}\int_{0}^{T}\dbE\big[ \big|\zeta^{1i}(t,s)\big|^2\big] dtds< \i.$$
It then follows from \autoref{Lebes} that there exists a set $E_3$ such that $m(E_3) = 0$ and for any $t_0 \in [0, T] \backslash E_3$,
\begin{align}
\lim\limits_{\e\rightarrow 0}\frac{\int_{t_0}^{t_0+\e}\int_{0}^{T}\dbE\big[ \big|\zeta^{1i}(t,s)\big|^2\big] dtds}{\e} = \int_{0}^{T}\dbE\big[ \big|\zeta^{1i}(t,t_0)\big|^2\big] dt < \i.
\end{align}
Thus, when $\e$ is small enough, we have that for any $t_0 \in [0, T] \backslash E_3$
\begin{align}\label{2}
\int_{t_0}^{t_0+\e}\int_{0}^{T}\dbE\big[ \big|\zeta^{1i}(t,s)\big|^2\big] dtds \leq K\e.
\end{align}
In view of \eqref{main} and combine \eqref{1111} and \eqref{2}, we deduce that 
\begin{align*}
\Big|\dbE \Big[\int_0^T \langle \Delta \sigma^i_{\tilde{x}}(s) \tilde{x}_1(s), \int_{s+\delta}^T \zeta^{\e1i}(t,s)dt 1_{[0, T-\delta)}(s)\rangle ds\Big]\Big|  = o(\e).
\end{align*}
The rest of the proof follows from a similar deduction in the proof of  Meng--Shi--Wang--Zhang  \cite[equation (3.21) on page 185 to page 196]{meng2025general} $\big(\hb{by letting }h(x, x_\delta, \tilde{x}) = \tilde{\Gamma}^\e(T)h(x, x_\delta, \tilde{x})$ and $ l(t, x(t), x_\delta(t), \tilde{x}(t), u(t), \mu(t))= \tilde{\Gamma}^\e(t)f(t, x(t), x_\delta(t), \tilde{x}(t), y^*(t), z^*(t), u(t), \mu(t))\big)$. 
\end{proof}
%
%
\begin{remark}
It is worth noting that the estimate of \eqref{main} in \autoref{non-recursive} is not the same as the corresponding term appearing in the last inequality on page 187 of Meng–Shi–Wang–Zhang \cite{meng2025general}.
\end{remark}
%

\subsection{The estimate of $y^\e(0) - y^*(0)-\hat{y}(0)$}
In this subsection, we give the estimate of $y^\e(0) - y^*(0)-\hat{y}(0)$ (\autoref{estimate-last}). Before proving this result, we first present some results.
The following result is the estimate of $q^\e - q$. Recall that $t_0$ is the left endpoint for the small interval of perturbation and $\delta$ is the length of the delay interval. 
\begin{proposition}\label{est-q}
	Let \autoref{A1} hold. Then, we have that there exist a set $E_4$ and a decreasing subsequence $\{\e_i\}_{i=1}^\i \subset (0, \delta)$ such that $m(E_4) = 0$ and for any $t_0 \in [0, T] \backslash E_4$,
	\begin{align}\label{q}
		\lim\limits_{i\rightarrow \i} \e_i = 0 \q \hb{and} \q \dbE\Big[  \int_{t_0}^{t_0+\e_i}|q^{\e_i}(t)-q(t)|^2dt\Big] = o(\e_i).
	\end{align}
	\end{proposition}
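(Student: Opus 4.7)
The plan is to establish the global bound $\dbE[\int_0^T |q^\e(s) - q(s)|^2\, ds] = O(\e)$ and then extract a subsequence by a Fubini-plus-$L^1$-convergence argument so that integrals over small windows $[t_0, t_0+\e_i]$ are $o(\e_i)$ for almost every $t_0$. The existence of a single subsequence $\{\e_i\}$ together with a null set $E_4$ (rather than a $t_0$-dependent sequence) matches the conclusion of the proposition.

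For the global bound, from the definitions \eqref{ade} and \eqref{ad}, $q^{\e i}(s) - q^i(s)$ is a linear combination of $\nu^{\e 0i}(s) - \nu^{0i}(s)$, $\nu^{\e 1i}(s) - \nu^{1i}(s)$, and the time integrals $\int_s^T (\zeta^{\e 0i}(t,s) - \zeta^{0i}(t,s))\, dt$ and $\int_{s+\delta}^T (\zeta^{\e 1i}(t,s) - \zeta^{1i}(t,s))\, dt \, 1_{[0,T-\delta)}(s)$. Cauchy--Schwarz in $t$ followed by integration in $s$ reduces $\dbE[\int_0^T |q^\e - q|^2 \, ds]$ to bounds on $\dbE[\int_0^T |\nu^\e - \nu|^2 \, ds]$ and $\dbE[\int_0^T \int_0^T |\zeta^\e(t,s) - \zeta(t,s)|^2 \, dt\, ds]$, both of which are controlled by Proposition~\ref{Ad-unique} in terms of moments of $\tilde\Gamma^\e - \tilde\Gamma$ weighted by $\bar H$, $\bar F$, $A$, $C$. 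Combining the $\beta=2$ case of \eqref{Gamma} with H\"older's inequality and the moment bounds on $\bar H$, $\bar F$ from Assumption~\autoref{A1} and Proposition~\ref{thm2.1} then yields $\dbE[\int_0^T |q^\e - q|^2 \, ds] \leq K\e$.

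For the subsequence extraction, set $F^\e(s) := \dbE[|q^\e(s) - q(s)|^2]$ and $\phi^\e(t) := \int_t^{(t+\e)\wedge T} F^\e(s)\, ds$. A single application of Fubini gives $\int_0^T \phi^\e(t)\, dt \leq \e \int_0^T F^\e(s)\, ds \leq K\e^2$, so $\phi^\e/\e \to 0$ in $L^1(0,T)$. By the standard fact that $L^1$-convergence entails a.e.-convergence along a subsequence, one can choose a decreasing $\e_i \downarrow 0$ with $\e_i \in (0,\delta)$ and a Lebesgue-null set $E_4 \subset [0,T]$ such that $\phi^{\e_i}(t_0)/\e_i \to 0$ for every $t_0 \in [0,T] \setminus E_4$, which is exactly \eqref{q}.

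The delicate point I expect is Step 1: tracking how the BSVIE structure of \eqref{adjointe}(b)--(c) and \eqref{adjoint}(b)--(c) transfers the $O(\e^{\beta/2})$ control of $\tilde\Gamma^\e - \tilde\Gamma$ given by \eqref{Gamma} into the doubly-integrated $L^2$-norm bound on $\zeta^\e - \zeta$ via Proposition~\ref{BSVIE-unique}, and checking that the integrability of $\bar H$, $\bar F$ furnished by Assumption~\autoref{A1} and Proposition~\ref{thm2.1} is high enough to absorb the $L^2$ moments of $\tilde\Gamma^\e - \tilde\Gamma$ through H\"older. Once that global $O(\e)$ bound is in hand, the subsequence argument is the elementary observation that a nonnegative function in $L^1(0,T)$ whose total mass is $O(\e)$ has windowed integrals whose average mass over $[t_0, t_0+\e]$ costs an extra factor of $\e$.
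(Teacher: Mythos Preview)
Your proposal is correct, and Step~1 matches the paper's argument essentially verbatim: both establish $\dbE[\int_0^T |q^\e - q|^2 \, ds] \leq K\e$ from the stability estimates \eqref{YY}--\eqref{YYY} together with the $O(\e^{\beta/2})$ bound on $\tilde\Gamma^\e - \tilde\Gamma$ in \eqref{Gamma}.

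Where you diverge is in the subsequence extraction, and your route is more economical. The paper introduces an auxiliary exponent $0<\alpha<1$, shows $\int_0^T \dbE[|q^\e - q|^2]/\e^\alpha\, dt \to 0$, extracts a subsequence with $\int_0^T \dbE[|q^{\e_i} - q|^2]/\e_i^\alpha\, dt \leq 2^{-i}$, defines the dominating function $\Phi(t) = \sum_i \dbE[|q^{\e_i}(t) - q(t)|^2]/\e_i^\alpha$, and then invokes the Lebesgue differentiation theorem on $\Phi$ to localize. Your Fubini observation that $\int_0^T \phi^\e(t)\, dt \leq \e \int_0^T F^\e(s)\, ds \leq K\e^2$ bypasses both the $\alpha$-parameter trick and the Lebesgue differentiation theorem entirely: it converts the global $O(\e)$ bound directly into $L^1$-convergence of $\phi^\e/\e$ over the center points $t_0$, after which the a.e.-subsequence is immediate. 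The paper's approach has the mild advantage that the dominating-function construction is a reusable template (it reappears elsewhere in the paper, e.g.\ in the estimate of $\hat z^\e$ in Step~2 of Lemma~\ref{estimate-last}), but for this specific proposition your argument is both shorter and more transparent.
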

\begin{proof}
By \autoref{A1}, \autoref{prop3.1}, \autoref{BSVIE-unique}, and \eqref{Gamma}, we deduce that for any fixed $0<\alpha < 1$,
\begin{align*}\label{}
	&0\leq \varliminf_{\e\rightarrow 0}\frac{1}{\e^\alpha}\int_{0}^{T}\dbE\Big[|q^\e(t) - q(t)|^2 \Big] dt \leq \varlimsup_{\e\rightarrow 0}\frac{1}{\e^\alpha}\int_{0}^{T}\dbE\Big[|q^\e(t) - q(t)|^2\Big] dt \\
	&\q\leq \varlimsup_{\e\rightarrow 0}\frac{1}{\e^\alpha}\bigg\{\int_{0}^{T}\dbE\Big[|\nu^\e(t) - \nu(t)|^2\Big] dt + \int_{0}^{T}\int_{0}^{T}\dbE\Big[|\zeta^\e(t,s) - \zeta (t,s)|^2\Big]dtds\bigg\}\\
	&\q\leq K\varlimsup_{\e\rightarrow 0}\frac{1}{\e^\alpha}\dbE\Big[|\tilde{\Gamma}^\e(T) - \tilde{\Gamma}(T)|^2 + \int_{0}^{T}|\tilde{\Gamma}^\e(t) - \tilde{\Gamma}(t)|^2 dt \Big]\leq K\lim\limits_{\e\rightarrow 0}\e^{1-\alpha} =0.
\end{align*}
Then, we obtain that for any $0<\alpha <1$,
\begin{align*}
	\lim\limits_{\e\rightarrow 0}\int_{0}^{T}\dbE\Big[\frac{|q^\e(t) - q(t)|^2}{\e^\alpha} \Big] dt = 0.
\end{align*}
Furthermore, we know that there exists a decreasing subsequence $\{\e_{i}\}_{i=1}^{\i} \subset (0, \delta)$ such that 
\begin{align*}
	\lim\limits_{m\rightarrow \i} \e_{i} = 0
	\q
	\hb{and}
	\q
	\int_{0}^{T}\dbE\Big[\frac{|q^{\e_i}(t) - q(t)|^2}{\e^\alpha_i} \Big]dt  \leq  2^{-i}.
\end{align*}
Define $\Phi (t) \triangleq \sum\limits_{i=1}^{\i} \dbE\Big[\frac{|q^{\e_i}(t) - q(t)|^2}{\e^\alpha_i} \Big], t\in [0, T]$.
Then we have that
\begin{align}\label{control funuction}
	\dbE\Big[\frac{|q^{\e_i}(t) - q(t)|^2}{\e^\alpha_i} \Big] \leq \Phi (t), t \in [0, T] \q\hb{and} \q \int_{0}^{T} \Phi(t)dt \leq \sum_{i=1}^{\i} 2^{-i} < \i.
\end{align}
It then follows  from \eqref{control funuction} and \autoref{Lebes} that
there exists a set $E_{4}$ such that $m(E_{4}) = 0$ and for any $t_0 \in [0, T] \backslash E_{4}$,
\begin{align*}
	&0\leq \varlimsup_{i\rightarrow \i}\int_{t_0}^{t_0+\e_i}\dbE\Big[\frac{|q^{\e_{i}}(t) - q(t)|^2}{\e_{i}} \Big] dt 
	= \varlimsup_{i\rightarrow \i} \frac{1}{\e_i^{1-\alpha}}\int_{t_0}^{t_0+\e_i}\dbE\Big[\frac{|q^{\e_{i}}(t) - q(t)|^2}{\e^\alpha_{i}} \Big] dt\nonumber \\
	&
	\leq \varlimsup_{i\rightarrow \i} \frac{\int_{t_0}^{t_0+\e_i}\Phi(t) dt}{\e_i} \e_i^{\alpha}
	= \Phi(t_0)\varlimsup_{i\rightarrow \i}\e_i^\alpha  = 0.
\end{align*}
Thus, we obtain that 
\begin{align}\label{111}
	\lim\limits_{i\rightarrow \i}\int_{t_0}^{t_0+\e_i}\dbE\Big[\frac{|q^{\e_{i}}(t) - q(t)|^2}{\e_{i}} \Big] dt = 0,
\end{align}
which implies that \eqref{q} holds.
\end{proof}
\begin{remark}
It is worth noting that we only obtain $\dbE[|\tilde{\Gamma}^\e(t) - \tilde{\Gamma}(t)|^\beta] = O(\e^{\frac{\beta}{2}})$, for $\beta > 1$ and $t\in[0, T]$, rather than  $\dbE[|\tilde{\Gamma}^\e(t) - \tilde{\Gamma}(t)|^\beta] = o(\e^{\frac{\beta}{2}})$,  for $\beta > 1$ and $t\in[0, T]$. This makes the proof of \autoref{est-q} more complex and is one of the main technical challenges in our method.
\end{remark}

Based on \autoref{est-q}, we give the estimate of $I^\e - I$. Recall that $t_0$ is the left endpoint for the small interval of perturbation and $\delta$ is the length of the delay interval. 
\begin{proposition}\label{est-I-I}
Let \autoref{A1} hold. Then, we have that there exists a set $E_5$ and a decreasing subsequence $\{\e_j\}_{j=1}^\i \subset (0, \delta)$ such that $m(E_5) = 0$ and for any  $t_0 \in [0, T] \backslash E_5$,
\begin{align}\label{357}
\lim\limits_{j\rightarrow \i} \e_j = 0 \q \hb{and} \q \dbE\Big[  \int_{0}^{T}|I^{\e_j}(t)-I(t)|dt\Big] = o(\e_j).
\end{align}
\end{proposition}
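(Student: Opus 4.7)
The plan is to decompose $I^{\e}(s) - I(s)$ into contributions from each term in the definition \eqref{Ke}--\eqref{P1}, \eqref{K} and handle each piece separately via Cauchy--Schwarz combined with the subsequence technique already employed in \autoref{est-q}. Writing out the difference,
\begin{align*}
I^\e(s) - I(s)
= &\,\bigl(\Delta G^\e(s) - \Delta G(s)\bigr) 1_{[t_0, t_0+\e]}(s) + \bigl(\Delta \tilde{G}^\e(s) - \Delta \tilde{G}(s)\bigr) 1_{[t_0+\delta, t_0+\delta+\e]}(s) 1_{[0,T-\delta]}(t_0) \\
&+ \frac{1}{2}\sum_{i=1}^{d}\Delta^{1*}\sigma^i(s)^\top \bigl(\mathcal{P}^\e(s)-\mathcal{P}(s)\bigr)\Delta^{1*}\sigma^i(s) 1_{[t_0, t_0+\e]}(s) \\
&+ \frac{1}{2}\sum_{i=1}^{d}\Delta^{2*}\sigma^i(s)^\top \bigl(\mathcal{P}^\e(s)-\mathcal{P}(s)\bigr)\Delta^{2*}\sigma^i(s) 1_{[t_0+\delta, t_0+\delta+\e]}(s),
\end{align*}
and, using the definitions \eqref{Me} and \eqref{M}, the difference $\Delta G^\e - \Delta G$ further splits into three terms of the form $\bigl(\tilde{\Gamma}^\e(s)-\tilde{\Gamma}(s)\bigr)\,\Delta_1 f(s)$, $\bigl(p^\e(s)-p(s)\bigr)^{\top}\Delta_1 b(s)$ and $\sum_i\bigl(q^{\e i}(s)-q^i(s)\bigr)^{\top}\Delta_1\sigma^i(s)$, where each $\Delta_1$-increment has all polynomial moments by the growth conditions in \autoref{A1} together with $u(\cd) \in \mathcal{U}_{ad}$.

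The first term is handled by Cauchy--Schwarz: \eqref{Gamma} yields $\dbE\int_{t_0}^{t_0+\e}|\tilde{\Gamma}^\e - \tilde{\Gamma}|^2 ds = O(\e^2)$, and \autoref{Lebes} applied to $s \mapsto \dbE[|\Delta_1 f(s)|^2]$ gives $\dbE\int_{t_0}^{t_0+\e}|\Delta_1 f|^2 ds = O(\e)$ off a null set of $t_0$; the product is $O(\e^{3/2})=o(\e)$. For the $p^\e - p$ term, \autoref{Ad-unique} and the definition \eqref{ade}, \eqref{ad} together with Jensen and Fubini yield $\dbE\int_{t_0}^{t_0+\e}|p^\e - p|^2 ds = O(\e^2)$, so Cauchy--Schwarz against $\Delta_1 b$ again gives $O(\e^{3/2})=o(\e)$. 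The $q^\e-q$ term is the crux: no pointwise control is available because of the martingale component $\zeta^{\e i}-\zeta^{ i}$, but \autoref{est-q} provides a decreasing subsequence $\{\e_i\}\subset(0,\delta)$ and a null set $E_4$ such that $\dbE\int_{t_0}^{t_0+\e_i}|q^{\e_i}-q|^2 ds = o(\e_i)$ for $t_0\notin E_4$; combined with $\dbE\int_{t_0}^{t_0+\e_i}|\Delta_1\sigma^i|^2 ds = O(\e_i)$ (again by Lebesgue differentiation), Cauchy--Schwarz yields $o(\e_i^{1/2})\cdot O(\e_i^{1/2}) = o(\e_i)$.

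For the $\mathcal{P}^\e-\mathcal{P}$ contribution, applying the well-posedness estimates in \autoref{Ad-uniquee}, \autoref{Ad-unique} to the difference system for $(P^\e_i - P_i, Q^\e_i - Q_i)$, $i=1,2,3,4$, with ``driving data'' $\tilde{\Gamma}^\e(T)H - \tilde{\Gamma}(T)H$ and $\partial^2 G^\e - \partial^2 G$ (both bounded in $L^2$ by $O(\e^{1/2})$ thanks to \eqref{Gamma} and \autoref{state}), one gets $\dbE\int_0^T|P^\e_i(\theta)-P_i(\theta)|^2 d\theta + \dbE\int_0^T\int_0^T|P^\e_4 - P_4|^2 d\theta d\theta' = O(\e)$. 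Because $\mathcal{P}(s)$ is a time-integrated object (see \eqref{P1}), Jensen's inequality then upgrades this to $\sup_{s\in[0,T]}\dbE|\mathcal{P}^\e(s)-\mathcal{P}(s)|^2 = O(\e)$, and hence $\dbE\int_{t_0}^{t_0+\e}|\mathcal{P}^\e-\mathcal{P}|^2 ds = O(\e^2)$. Cauchy--Schwarz against $\dbE\int_{t_0}^{t_0+\e}|\Delta^{1*}\sigma^i|^4 ds = O(\e)$ (valid off a null set in $t_0$) produces another $O(\e^{3/2}) = o(\e)$ contribution, and the $[t_0+\delta,t_0+\delta+\e]$ terms involving $\Delta\tilde{G}^\e - \Delta\tilde{G}$ and $\Delta^{2*}\sigma^i$ are treated completely symmetrically.

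Finally, the exceptional null sets arising in the Lebesgue differentiation steps (one for each term handled above), together with $E_4$ from \autoref{est-q}, are collected into a single null set $E_5$; taking the subsequence $\{\e_j\} = \{\e_i\}$ furnished by \autoref{est-q} and summing the $o(\e_j)$ contributions above yields \eqref{357} for every $t_0\in[0,T]\setminus E_5$. The main obstacle is the $q^\e-q$ piece: the lack of pointwise $L^2$-control stronger than $O(1)$ in $s$, coupled with the fact that the dual relationship in \autoref{non-recursive} forces us to squeeze $o(\e)$ (rather than $O(\e)$) out of an interval of length $\e$, is precisely what necessitates inheriting the non-constructive subsequence from \autoref{est-q} rather than obtaining the conclusion for arbitrary $\e\to 0$.
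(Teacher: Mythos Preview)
Your proposal is correct and follows essentially the same route as the paper: the same decomposition of $I^\e - I$, the same Cauchy--Schwarz/H\"older treatment of the $\tilde{\Gamma}^\e-\tilde{\Gamma}$, $p^\e-p$ and $\mathcal{P}^\e-\mathcal{P}$ pieces (each contributing $O(\e^{3/2})$), and the same reliance on the subsequence from \autoref{est-q} for the $q^\e-q$ piece. The only cosmetic difference is that the paper extracts a further sub-subsequence when passing to the shifted interval $[t_0+\delta,t_0+\delta+\e]$, whereas you claim the original $\{\e_i\}$ suffices; since the dominating function $\Phi$ in the proof of \autoref{est-q} is independent of $t_0$, your claim is in fact justified (Lebesgue differentiation at $t_0+\delta$ rather than $t_0$ gives only a new null set, not a new subsequence), so no gap arises.
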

\begin{proof}
Without loss of generality, we assume $d=1$.
It is obvious that 
\begin{equation}\label{first}
\begin{aligned}
&\dbE\Big[\int_{0}^{T}|I^\e(s)-I(s)|ds\Big] \leq K\bigg\{ \dbE\Big[\int_{t_0}^{t_0+\e}|\Delta G^\e(s) - \Delta G(s)| ds\Big]+ 
\dbE\Big[\int_{t_0+\delta}^{t_0+\delta+\e}|\Delta \tilde{G}^\e(s) - \Delta \tilde{G}(s)| ds\Big]  \\
&  +\dbE\Big[\int_{t_0}^{t_0+\e}\Delta^{1*} \sigma(s)^\top |\mathcal{P}^\e(s) - \mathcal{P}(s)|\Delta^{1*} \sigma(s) ds\Big]+ \dbE\Big[\int_{t_0+\delta}^{t_0+\delta+\e}\Delta^{2*} \sigma(s)^\top|\mathcal{P}^\e(s) - \mathcal{P}(s)|\Delta^{2*} \sigma(s) ds\Big]\bigg\}.
\end{aligned}\end{equation}
\noindent{\bf Step 1:} In this step,
we first  estimate the term
$
\dbE\Big[\int_{t_0}^{t_0+\e}|\Delta G^\e(s) - \Delta G(s)| ds\Big]+ 
\dbE\Big[\int_{t_0+\delta}^{t_0+\delta+\e}|\Delta \tilde{G}^\e(s) - \Delta \tilde{G}(s)| ds\Big].
$
By the definition of $\Delta G^\e(s)$, $\Delta G(s)$ and \autoref{A1}, we have
\begin{equation}\label{3.56}
\begin{aligned}
&\dbE\Big[\int_{t_0}^{t_0+\e}|\Delta G^\e(s) - \Delta G(s)|ds\Big] 
\leq K\dbE\bigg[\int_{t_0}^{t_0+\e}\bigg\{|\tilde{\Gamma}^\e(s) - \tilde{\Gamma}(s)| \Big(1+  |x^*(s)|  + |x_\delta^*(s)| + |\tilde{x}^*(s)| + |y^*(s)|     \\
&\q + |z^*(s)| + |u(s)| + |u^*(s)|
+ |\mu^*(s)|\Big) + |p^\e(s) - p(s)| \Big(1+  |x^*(s)|  + |x_\delta^*(s)| + |\tilde{x}^*(s)|  + |u(s)|+  
\\
&\q |u^*(s)| + |\mu^*(s)|\Big) + |q^\e(s) - q(s)| \Big(1+  |x^*(s)|  + |x_\delta^*(s)| + |\tilde{x}^*(s)|  + |u(s)|+ |u^*(s)| + |\mu^*(s)| \Big) 
\bigg\} ds\bigg] .
\end{aligned}\end{equation}
Now we estimate the right side of \eqref{3.56}.

First, we estimate the term $\dbE\Big[\int_{t_0}^{t_0+\e}|\tilde{\Gamma}^\e(s) - \tilde{\Gamma}(s)||u^*(s)| ds\Big] $. By H\"{o}lder's inequality and \eqref{Gamma}, we deduce that for any $\beta > 1$,
\begin{equation}
\begin{aligned}\label{3.57}
&\dbE\Big[\int_{t_0}^{t_0+\e}|\tilde{\Gamma}^\e(s) - \tilde{\Gamma}(s)||u^*(s)| ds\Big] 
\leq \Big\{\dbE\Big[\int_{t_0}^{t_0+\e}|\tilde{\Gamma}^\e(t) - \tilde{\Gamma}(t)|^\beta dt \Big]\Big\}^{\frac{1}{\beta}}
\Big\{\dbE\Big[\int_{t_0}^{t_0+\e}|u^*(t)|^{\frac{\beta}{\beta - 1}} dt \Big]\Big\}^{\frac{\beta-1}{\beta}}\\
&\leq
K\e^{\frac{\beta-1}{\beta}} \Big\{\dbE\Big[\mathop{\sup}\limits_{t\in[0, T]} |u^*(t)|^{\frac{\beta}{\beta-1}}\Big]\Big\}^{\frac{\beta-1}{\beta}}
\Big\{\dbE\Big[\int_{t_0}^{t_0+\e}|\tilde{\Gamma}^\e(t) - \tilde{\Gamma}(t)|^\beta dt \Big]\Big\}^{\frac{1}{\beta}} \leq K\e^{\frac{1}{2}+\frac{\beta -1}{\beta}} 
.
\end{aligned}\end{equation}
Let $\beta = 3$, we deduce that 
$\dbE\Big[\int_{t_0}^{t_0+\e}|\tilde{\Gamma}^\e(s) - \tilde{\Gamma}(s)||u^*(s)| ds\Big] = o(\e).
$

Second, we estimate the term $\dbE\Big[\int_{t_0}^{t_0+\e}|p^\e(s) - p(s)||u^*(s)| ds\Big]$. By H\"{o}lder's inequality, we deduce that 
\begin{equation}\label{3.571}
\begin{aligned}
&\dbE\Big[\int_{t_0}^{t_0+\e}|p^\e(s) - p(s)||u^*(s)| ds\Big] 
\leq K\e^{\frac{1}{2}}
\Big\{\dbE\Big[\int_{t_0}^{t_0+\e}|p^\e(t) - p(t)|^2 dt \Big]\Big\}^{\frac{1}{2}}\Big\{\dbE\Big[\mathop{\sup}\limits_{t\in[0, T]} |u^*(t)|^2\Big]\Big\}^{\frac{1}{2}} \\
&\leq K\e^{\frac{1}{2}}
\Big\{\dbE\Big[\int_{t_0}^{t_0+\e}|p^\e(t) - p(t)|^2 dt \Big]\Big\}^{\frac{1}{2}}.
\end{aligned}\end{equation}
It then follows from the definition of $p^\e$ and $p$, H\"{o}lder's inequality, Tower property, \eqref{YY}, and \eqref{YYY} that
for any $t\in [t_0, t_0 +\e ]$,
\begin{align*}\label{}
&\dbE\Big[|p^\e(t) - p(t)|^2\Big] \\
&\leq \dbE\Big[\mathop{\sup}_{s\in[0, T]} |\lambda^\e(s) - \lambda(s)|^2 + \int_{0}^{T}|\eta^\e(s) - \eta(s)|^2 ds\Big]       \\
&\leq K \dbE\Big[|\tilde{\Gamma}^\e(T)\bar{H} - \tilde{\Gamma}(T)\bar{H}|^2+ \int_{0}^{T}|\eta^\e(s) - \eta(s)|^2ds\Big]\\
&\leq K \dbE\Big[\Big(|\tilde{\Gamma}^\e(T) - \tilde{\Gamma}(T)|\cd |\bar{H}|\Big)^2 + \int_{0}^{T}|\tilde{\Gamma}^\e(s) - \tilde{\Gamma}(s)|^2ds\Big] \\
&\leq K \Big\{\dbE\Big[|\tilde{\Gamma}^\e(T) - \tilde{\Gamma}(T)| ^4\Big]\Big\}^{\frac{1}{2}}\cd
\Big\{\dbE\big[|\bar{H}|^4\big]\Big\}^{\frac{1}{2}} + \dbE\Big[\int_{0}^{T}|\tilde{\Gamma}^\e(s) - \tilde{\Gamma}(s)|^2ds\Big]
\leq K\e.
\end{align*}
Thus, we have $\dbE\Big[\int_{t_0}^{t_0+\e}|p^\e(t) - p(t)|^2 dt \Big] \leq K\e^2$. Therefore, we have $ \dbE\Big[\int_{t_0}^{t_0+\e}|p^\e(s) - p(s)||u^*(s)| ds\Big] \leq K\e^{\frac{3}{2}}$.

Third, we will prove that there exists  a  decreasing subsequence $\{\e_{i}\}_{i=1}^{\i} \subset (0, \delta)$ such that for any $t_0 \in [0, T] \backslash E_4$,
\begin{align*}
	\lim\limits_{i\rightarrow \i} \e_{i} = 0
	\q
	\hb{and}
	\q
	\dbE\Big[\int_{t_0}^{t_0+\e_i}|q^{\e_i}(s) - q(s)||u^*(s)| ds\Big]   = o(\e_i).
\end{align*}
By H\"{o}lder's inequality and \autoref{est-q}, we deduce that for any $t_0 \in [0, T] \backslash E_4$,
\begin{equation}
	\begin{aligned}\label{3.66}
		&0\leq \varliminf_{i\rightarrow \i}\frac{\dbE\Big[\int_{t_0}^{t_0+\e_i}|q^{\e_i}(s) - q(s)||u^*(s)| ds\Big] }{\e_i} \leq \varlimsup_{i\rightarrow \i}\frac{\dbE\Big[\int_{t_0}^{t_0+\e_i}|q^{\e_i}(s) - q(s)||u^*(s)| ds\Big] }{\e_i}\\
		&\leq \lim\limits_{i\rightarrow \i} \Big\{\dbE\big[\mathop{\sup}_{t\in[0,T]}|u^*(t)|^2\big]\Big\}^{\frac{1}{2}}
		\bigg\{\frac{\dbE\Big[\int_{t_0}^{t_0+\e_i}|q^{\e_i}(t) - q(t)|^2 dt \Big]}{\e_i}\bigg\}^{\frac{1}{2}} 
		= 0.
\end{aligned}\end{equation}
Thus, we obtain that 
\begin{align*}
	\dbE\Big[\int_{t_0}^{t_0+\e_i}|q^{\e_i}(s) - q(s)||u^*(s)| ds\Big] = o(\e_i).
\end{align*}
Finally, the other terms of the right side of \eqref{3.56} can be estimated in $t_0 \in [0, T] \backslash E_{51} $ and a decreasing subsequence $\{\e_{i_m}\}_{m=1}^\i \subset \{\e_i\}_{i=1}^\i$, where $m(E_{51}) = 0$. 
Therefore, we have that for $t_0 \in [0, T] \backslash \big\{E_4 \mathop{\bigcup} E_{51} \big\}$,
$ \dbE\Big[\int_{t_0}^{t_0+\e_{i_m}}|\Delta G^{\e_{i_m}}(s) - \Delta G(s)| ds\Big] = o(\e_{i_m})$.
Similarly, we deduce that there exist $\tilde{E}_{4}, \tilde{E}_{51}$ and a decreasing subsequence $\{\e_{i_{m_j}}\}_{j=1}^\i  \subset \{\e_{i_m}\}_{m=1}^\i $ such that $m(\tilde{E}_{4}) = m(\tilde{E}_{51}) = 0$
and for
$t_0 \in [0, T] \backslash \big\{\tilde{E}_{4} \mathop{\bigcup} \tilde{E}_{51} \big\}$,
$ \dbE\Big[\int_{t_0+\delta}^{t_0+\delta+\e_{i_{m_j}}}|\Delta \tilde{G}^{\e_{i_{m_j}}}(s) - \Delta \tilde{G}(s)| ds\Big] = o(\e_{i_{m_j}})$.
For notational convenience, we still denote the subsequence $\{\e_{i_{m_j}}\}_{j=1}^\i$ by $\{\e_j\}_{j=1}^\i$.\\
\noindent{\bf Step 2:} In this step,
we estimate the term
$
\dbE\Big[\int_{t_0}^{t_0+\e}\Delta^{1*} \sigma(s)^\top |\mathcal{P}^\e(s) - \mathcal{P}(s)|\Delta^{1*} \sigma(s) ds\Big]+ \dbE\Big[\int_{t_0+\delta}^{t_0+\delta+\e}\Delta^{2*} \sigma(s)^\top|\mathcal{P}^\e(s) - \mathcal{P}(s)|\Delta^{2*} \sigma(s) ds\Big].
$
By H\"{o}lder's inequality, \autoref{A1}, and \autoref{state}, we have
\begin{align}\label{}
\dbE\Big[\int_{t_0}^{t_0+\e} \Delta^{1*} \sigma(s)^\top |\mathcal{P}^\e(s) - \mathcal{P}(s)| \Delta^{1*} \sigma(s)ds\Big]  \leq K_{t_0}\e^{\frac{1}{2}}\Big\{\dbE\Big[ \int_{t_0}^{t_0+\e} |\mathcal{P}^\e(t) - \mathcal{P}(t)|^2dt \Big]\Big\}^{\frac{1}{2}}
\end{align}
and
\begin{equation}\label{}
\begin{aligned}
&\dbE\Big[ |\mathcal{P}^\e(t) - \mathcal{P}(t)|^2 \Big]
\leq K\bigg\{\dbE\Big[|\tilde{\Gamma}^\e(T) - \tilde{\Gamma}(T)|^2  
+ \int_{0}^{T}|P^\e_2(\theta)-P_2(\theta)|^2d\theta+\int_{0}^{T}|P^\e_3(\theta)-P_3(\theta)|^2d\theta\\
&+\int_{0}^{T}\int_{0}^{T}|P^\e_4(\theta',\theta) - P_4(\theta',\theta)|^2d\theta d\theta'\Big]   \bigg\}, \q t\in [0, T].
\end{aligned}
\end{equation}
By \eqref{Y}, \eqref{Gamma}, and in view of Wang--Yong \cite[(5.10) and (5.11)]{wang2023spike}, it is easy to deduce that   
\begin{equation}\label{}
\begin{aligned}
&\dbE\Big[|\tilde{\Gamma}^\e(T) - \tilde{\Gamma}(T)|^2  
+ \int_{0}^{T}|P^\e_2(\theta)-P_2(\theta)|^2d\theta+\int_{0}^{T}|P^\e_3(\theta)-P_3(\theta)|^2d\theta\\
&+\int_{0}^{T}\int_{0}^{T}|P^\e_4(\theta',\theta) - P_4(\theta',\theta)|^2d\theta d\theta'\Big]   \leq K\e.
\end{aligned}
\end{equation}
Thus, we deduce that $\dbE\Big[\int_{t_0}^{t_0+\e} \Delta^{1*} \sigma(s)^\top |\mathcal{P}^\e(s) - \mathcal{P}(s)| \Delta^{1*} \sigma(s)ds\Big] = o(\e)$. 
Similarly, we have that $\dbE\Big[\int_{t_0+\delta}^{t_0+\delta+\e} \Delta^{2*} \sigma(s)^\top |\mathcal{P}^\e(s) - \mathcal{P}(s)| \Delta^{2*} \sigma(s)ds\Big] = o(\e)$.

Let $E_5 = E_4 \mathop{\bigcup} E_{51} \mathop{\bigcup} \tilde{E}_4\mathop{\bigcup}\tilde{E}_{51}$. Combining step 1, step 2, and \eqref{first}, we deduce \eqref{357} holds. This completes the proof of \autoref{est-I-I}.
\end{proof}
%
Similar to the deduction to get \eqref{Gamma}, we deduce the following result easily.

\begin{proposition}\label{est-Gamma}
Let \autoref{A1}  hold. Then, for $\beta >1$, we have
\begin{align}
\dbE\Big[ \int_{0}^{T} \big|1-\tilde{\Gamma}^\e(t)\tilde{\Gamma}(t)^{-1}\big|^{\beta}dt\Big] = O(\e^{\frac{\beta}{2}}).
\end{align}
\end{proposition}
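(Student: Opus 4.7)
The plan is to reduce the statement to the already-established pointwise estimate \eqref{Gamma}, which asserts that for every $\beta>1$,
\begin{equation*}
\dbE\bigl[|\tilde{\Gamma}^\e(t) - \tilde{\Gamma}(t)|^\beta\bigr] \le K\e^{\beta/2},
\end{equation*}
with the constant $K$ independent of $t\in[0,T]$. The key observation is the algebraic identity
\begin{equation*}
1 - \tilde{\Gamma}^\e(t)\tilde{\Gamma}(t)^{-1} = \tilde{\Gamma}(t)^{-1}\bigl(\tilde{\Gamma}(t) - \tilde{\Gamma}^\e(t)\bigr),
\end{equation*}
which factors the integrand into a harmless ``weight'' $\tilde{\Gamma}(t)^{-1}$ times a difference already controlled by \eqref{Gamma}.

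Next I would apply H\"older's inequality with conjugate exponents $(2,2)$ at each fixed $t$, obtaining
\begin{equation*}
\dbE\bigl[|1 - \tilde{\Gamma}^\e(t)\tilde{\Gamma}(t)^{-1}|^\beta\bigr] \le \bigl\{\dbE[|\tilde{\Gamma}(t)^{-1}|^{2\beta}]\bigr\}^{1/2}\bigl\{\dbE[|\tilde{\Gamma}(t) - \tilde{\Gamma}^\e(t)|^{2\beta}]\bigr\}^{1/2}.
\end{equation*}
The second factor is $O(\e^{\beta/2})$ by applying \eqref{Gamma} with exponent $2\beta>1$. For the first factor I would note that $\tilde{\Gamma}^{-1}$ admits the exponential representation $\exp\{-\int_0^t f_y(r)dr + \int_0^t |f_z(r)|^2 dr\}\cdot \mathcal{E}\bigl(\int_0^t(-f_z(r))^\top dW(r)\bigr)$, so that the boundedness of $f_y$ and $f_z$ together with the same Girsanov/Novikov computation that produced \eqref{G1} yields $\tilde{\Gamma}^{-1}\in S^m_{\dbF}(0,T;\dbR)$ for every $m>1$ -- a fact already invoked during the proof of \autoref{est-I}. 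Consequently, $\dbE[|\tilde{\Gamma}(t)^{-1}|^{2\beta}]$ is bounded by a constant independent of $t$ and $\e$.

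Combining these two bounds gives $\dbE[|1-\tilde{\Gamma}^\e(t)\tilde{\Gamma}(t)^{-1}|^\beta]\le K\e^{\beta/2}$ uniformly in $t$, and Fubini's theorem yields
\begin{equation*}
\dbE\Bigl[\int_0^T \bigl|1 - \tilde{\Gamma}^\e(t)\tilde{\Gamma}(t)^{-1}\bigr|^\beta dt\Bigr] = \int_0^T \dbE\bigl[|1 - \tilde{\Gamma}^\e(t)\tilde{\Gamma}(t)^{-1}|^\beta\bigr] dt \le KT\e^{\beta/2} = O(\e^{\beta/2}).
\end{equation*}
No step here presents a serious obstacle, since the argument is essentially a H\"older decoupling of an estimate already in hand; the only mild subtlety is verifying the uniform-in-$(t,\e)$ $L^{2\beta}$-boundedness of $\tilde{\Gamma}^{-1}$, and this is routine given the bounded extra drift $|f_z|^2$ appearing in its exponential form.
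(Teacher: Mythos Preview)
Your proof is correct. The paper's own proof is the one-line remark that the result follows ``similar to the deduction to get \eqref{Gamma},'' and your factoring $1-\tilde\Gamma^\e(t)\tilde\Gamma(t)^{-1}=\tilde\Gamma(t)^{-1}\bigl(\tilde\Gamma(t)-\tilde\Gamma^\e(t)\bigr)$ together with H\"older and the already-recorded fact $\tilde\Gamma^{-1}\in S^m_{\dbF}(0,T;\dbR)$ for all $m>1$ is a clean implementation of exactly that idea.
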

Next, we give the estimate of $h$.
\begin{lemma}\label{h}
Let \autoref{A1} hold. Then the following result holds:
\begin{equation}\label{h2}
\begin{aligned}
&\dbE\Big[\tilde{\Gamma}^\e(T)\Big\{h\big(x^\e(T), x_\delta^\e(T), \tilde{x}^\e(T)\big) - h\big(x^*(T), x^*_{\delta}(T), \tilde{x}^*(T)\big)\Big\}\Big] \\
&= \dbE\Big[\tilde{\Gamma}^\e(T)\bar{H}\big(X_1(T) +X_2(T)\big) + \frac{1}{2}X_1(T)^\top \tilde{\Gamma}^\e(T)H X_1(T)\Big]+ o(\e),
\end{aligned}\end{equation}
where
$\bar{H}$, $H$ and $\tilde{\Gamma}^\e$ are defined in \eqref{H} and \eqref{gammae}, respectively.

\end{lemma}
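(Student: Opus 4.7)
The plan is to exploit a second-order Taylor expansion of $h$ around the optimal point $\big(x^*(T), x^*_\delta(T), \tilde{x}^*(T)\big)$, substitute the decomposition $\Delta X^\e(T)=X_1(T)+X_2(T)+\tilde{R}^\e(T)$ where $\tilde{R}^\e(T)\triangleq\Delta X^\e(T)-X_1(T)-X_2(T)$, and show that every term beyond the stated leading contributions is $o(\e)$ after weighting by $\tilde{\Gamma}^\e(T)$. Concretely, I would write
\[
h(\Theta^\e(T))-h(\Theta(T))=\bar{H}\,\Delta X^\e(T)+\tfrac12\Delta X^\e(T)^\top H\Delta X^\e(T)+R^\e(T),
\]
with the integral remainder
\[
R^\e(T)=\int_0^1\!\!\int_0^1\Delta X^\e(T)^\top\big[\partial^2 h\big(\Theta(T)+\theta'\theta\,\Delta X^\e(T)\big)-\partial^2 h(\Theta(T))\big]\Delta X^\e(T)\,\theta\,d\theta'd\theta.
\]
Multiplying by $\tilde{\Gamma}^\e(T)$ and taking expectations reduces the lemma to three separate estimates: the cross/remainder contributions to $\bar{H}\Delta X^\e(T)$, the cross/remainder contributions to the quadratic form, and $\dbE[\tilde{\Gamma}^\e(T)|R^\e(T)|]$.

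For the first-order expansion I would use $\bar{H}\Delta X^\e(T)=\bar{H}(X_1(T)+X_2(T))+\bar{H}\tilde{R}^\e(T)$ and bound $\dbE[\tilde{\Gamma}^\e(T)\bar{H}\tilde{R}^\e(T)]$ by H\"older's inequality using the uniform bound $\dbE[|\tilde{\Gamma}^\e(T)|^\beta]\le K$ from \eqref{G1} together with $\dbE[|\tilde{R}^\e(T)|^\beta]=o(\e^\beta)$ from \eqref{3.}, which gives $o(\e)$. For the quadratic form I would expand
\[
\Delta X^\e(T)^\top H\Delta X^\e(T)=X_1(T)^\top H X_1(T)+2X_1(T)^\top H X_2(T)+2X_1(T)^\top H\tilde{R}^\e(T)+\text{lower order},
\]
and note that by \autoref{state} together with H\"older one has $\|X_1(T)\|_{L^\beta}=O(\e^{1/2})$, $\|X_2(T)\|_{L^\beta}=O(\e)$, $\|\tilde{R}^\e(T)\|_{L^\beta}=o(\e)$; hence each cross term is at most of order $\e^{3/2}\cdot\|\tilde{\Gamma}^\e(T)\|_{L^\beta}=o(\e)$, and the $X_2$--$X_2$, $X_2$--$\tilde{R}^\e$ and $\tilde{R}^\e$--$\tilde{R}^\e$ terms are already $o(\e)$ trivially. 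Only $\dbE[\tilde{\Gamma}^\e(T)X_1(T)^\top H X_1(T)]$ survives, matching the stated right-hand side.

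The main obstacle is the Taylor remainder $R^\e(T)$, for which boundedness of $\partial^2 h$ alone only gives $O(\e)$; we need $o(\e)$. The idea is to write $|R^\e(T)|\le|\Delta X^\e(T)|^2\,\omega^\e(T)$ with $\omega^\e(T)\triangleq\sup_{\lambda\in[0,1]}|\partial^2 h(\Theta(T)+\lambda\Delta X^\e(T))-\partial^2 h(\Theta(T))|$, then apply H\"older with three exponents (e.g.\ $3,3,3$) to obtain
\[
\dbE[\tilde{\Gamma}^\e(T)|R^\e(T)|]\le \|\tilde{\Gamma}^\e(T)\|_{L^3}\,\|\Delta X^\e(T)\|_{L^6}^{2}\,\|\omega^\e(T)\|_{L^3}.
\]
The first factor is bounded by \eqref{G1}, the middle factor is $O(\e)$ by \eqref{3.6}, and the last factor tends to zero by dominated convergence, since $\omega^\e(T)\to 0$ almost surely (continuity of $\partial^2 h$ combined with $\Delta X^\e(T)\to 0$ in $L^\beta$ and hence along a subsequence a.s.) and $\omega^\e(T)\le 2\|\partial^2 h\|_\infty$. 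This yields $\dbE[\tilde{\Gamma}^\e(T)|R^\e(T)|]=o(\e)$ and closes the argument; a subsequence extraction only affects a null set of $t_0$ and can be absorbed into the exceptional sets already encountered in \autoref{est-I} and \autoref{est-I-I}.
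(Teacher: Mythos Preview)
Your proposal is correct and is precisely the argument the paper has in mind: the paper's own proof merely records that $\dbE[\sup_{t\in[0,T]}|\tilde{\Gamma}^\e(t)|^\beta]\le K$ for all $\beta>1$ and then invokes the standard second-order Taylor expansion from Yong--Zhou \cite[Theorem 4.4, (4.31)]{yong1999stochastic}, which is exactly what you have written out in detail.

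One small clarification: your closing remark about a subsequence extraction ``affecting a null set of $t_0$'' is unnecessary and slightly misplaced. The lemma is a pure $\e\to 0$ statement with no $t_0$ involved, and in fact no subsequence is needed at all: if $\|\omega^\e(T)\|_{L^3}$ did not tend to $0$, take $\e_n\to 0$ with $\|\omega^{\e_n}(T)\|_{L^3}\ge\delta>0$; since $\Delta X^{\e_n}(T)\to 0$ in $L^\beta$, a further subsequence converges a.s., whence $\omega^{\e_{n_k}}(T)\to 0$ a.s.\ by continuity of $\partial^2 h$, and dominated convergence gives $\|\omega^{\e_{n_k}}(T)\|_{L^3}\to 0$, a contradiction. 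Thus the $o(\e)$ in \eqref{h2} holds for every $\e\to 0$, in line with the paper's statement.
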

\begin{proof}
Note that for any $\beta > 1$,
$
\dbE [\mathop{\sup}\limits_{t \in [0,T]} |\tilde{\Gamma}^\e(t)|^\beta] \leq K   < \i.
$
%
%
Thus,  by H\"{o}lder's inequality and a similar deduction in Yong--Zhou \cite[Theorem 4.4, (4.31)]{yong1999stochastic}, we can get  \eqref{h2} immediately.
\end{proof}

%

%

%

Finally, we give an important result. Recall that $t_0$ is the left endpoint for the small interval of perturbation and $\delta$ is the length of the delay interval. 
\begin{lemma}\label{estimate-last}
Let \autoref{A1} hold. Then, there exist a set $E$ and a decreasing subsequence $\{\e_j\}_{j=1}^\i \subset (0, \delta)$ such that $m(E) =0$ and for any $t_0 \in [0, T] \backslash E $,
\begin{align}\label{last}
\lim\limits_{j\rightarrow \i} \e_j = 0 \q \hb{and} \q y^{\e_j}(0) - y^*(0) - \hat{y}(0) = o(\e_j).
\end{align}
 
%
%
%
%
\end{lemma}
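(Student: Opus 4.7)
The plan is to compare explicit integrating-factor representations of $\hat y^\e(0) := y^\e(0) - y^*(0)$ and $\hat y(0)$, match the two via the duality \autoref{non-recursive}, and then pass to the subsequence of \autoref{est-I-I}. Concretely, I will first apply It\^o's formula to $\tilde\Gamma(t)\hat y(t)$ in the BSDE \eqref{y5}; because $d\tilde\Gamma = \tilde\Gamma[f_y\,dt + f_z^\top\,dW]$ exactly cancels the $f_y\hat y + f_z^\top \hat z$ part of the generator of $\hat y$, taking expectation immediately yields the clean representation $\hat y(0) = \dbE[\int_0^T I(s)\,ds]$.

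For $\hat y^\e(0)$, I will apply It\^o's formula to $\tilde\Gamma^\e(t)\hat y^\e(t)$ in \eqref{t1}, obtaining
\begin{equation*}
\hat y^\e(0) = \dbE\Big[\tilde\Gamma^\e(T)\hat y^\e(T) + \int_0^T \tilde\Gamma^\e(s)\bigl\{\bar F^{'\e}(s)\Delta X^\e(s) + \Delta f(s)\bigr\}ds\Big] + R^\e,
\end{equation*}
with residual $R^\e := \dbE[\int_0^T \tilde\Gamma^\e(s)\{(f^{'\e}_y - \tilde f^\e_y)\hat y^\e + (f^{'\e}_z - \tilde f^\e_z)^\top \hat z^\e\}ds]$ reflecting the mismatch between the $\tilde\Gamma^\e$-drift, driven by $(\tilde f^\e_y,\tilde f^\e_z)$, and the generator $(f^{'\e}_y, f^{'\e}_z)$ of \eqref{t1}. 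Next I would apply \autoref{h} together with a second-order Taylor expansion of $f$ around $(\Theta(s),y^*(s),z^*(s),u^*(s),\mu^*(s))$ -- using \autoref{state}, \autoref{y11}, \eqref{3.}, and the boundedness of $\partial^2 h, \partial^2 f$ from \autoref{A1} -- to replace $\dbE[\tilde\Gamma^\e(T)\hat y^\e(T)]$ and $\dbE[\int_0^T\tilde\Gamma^\e \bar F^{'\e}\Delta X^\e\,ds]$, modulo $o(\e)$, by $\dbE[\tilde\Gamma^\e(T)\bar H(X_1(T)+X_2(T)) + \tfrac12 X_1(T)^\top\tilde\Gamma^\e(T)HX_1(T)]$ and $\dbE[\int_0^T\tilde\Gamma^\e\bigl(\bar F(X_1+X_2) + \tfrac12 X_1^\top FX_1\bigr)ds]$, respectively. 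Combining these with \autoref{non-recursive} then yields $\hat y^\e(0) = \dbE[\int_0^T I^\e(s)\,ds] + R^\e + o(\e)$.

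Along the subsequence $\{\e_j\}$ of \autoref{est-I-I}, one has $\dbE[\int_0^T |I^{\e_j}(s) - I(s)|\,ds] = o(\e_j)$, so together with the representation $\hat y(0) = \dbE[\int_0^T I(s)\,ds]$ from the first paragraph this gives $\dbE[\int_0^T I^{\e_j}\,ds] = \hat y(0) + o(\e_j)$, and it only remains to prove $R^{\e_j} = o(\e_j)$. For this I will use the decomposition $\hat y^\e = \hat y + (\hat y^\e - \hat y)$, $\hat z^\e = \hat z + (\hat z^\e - \hat z)$: the mean value theorem combined with \eqref{3.} and boundedness of $\partial^2 f$ gives $|f^{'\e}_\cdot - \tilde f^\e_\cdot| = O(\sqrt{\e})$ on $[0,T]\setminus E_\e$, where $E_\e := [t_0,t_0+\e]\cup[t_0+\delta,t_0+\delta+\e]$ has Lebesgue measure $2\e$, so by \autoref{est-I} (which gives $\hat y,\hat z$ of order $\e$) the $\hat y,\hat z$ contribution to $R^\e$ is $O(\sqrt{\e})\cdot O(\e) = o(\e)$; the $\hat y^\e-\hat y,\hat z^\e-\hat z$ contribution is absorbed through a self-consistent BSDE estimate in which the $O(\sqrt{\e})$ prefactor is moved to the left-hand side, and the $E_\e$-piece is controlled using the small-measure bound $\dbE[\int_{E_\e}|\hat z^\e|\,ds] \le (2\e)^{1/2}\|\hat z^\e\|_{L^2}$. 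The exceptional set $E$ will be taken as the union of the null sets from \autoref{est-I}, \autoref{est-II}, \autoref{non-recursive}, \autoref{est-q}, \autoref{est-I-I}, together with the bootstrap step.

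The hard part will be the residual estimate $R^{\e_j} = o(\e_j)$, since the direct Cauchy--Schwarz bound only yields $O(\e)$: $\|f^{'\e}_\cdot - \tilde f^\e_\cdot\|_{L^2}$ and $\|\hat y^\e,\hat z^\e\|_{L^2}$ are each only of order $\sqrt{\e}$. This is precisely the difficulty flagged in the introduction as requiring the ``second transformation'', whose analysis the paper identifies as the central technical ingredient.
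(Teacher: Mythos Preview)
Your overall architecture is sound and close to the paper's, but there is a genuine gap in the control of $R^\e$. Your ``self-consistent BSDE estimate'' for the $\mathcal Y^\e := \hat y^\e-\hat y$, $\mathcal Z^\e := \hat z^\e-\hat z$ contribution does not work: the BSDE for $\mathcal Y^\e$ has terminal datum $\Pi(T)=h(\Theta^\e(T))-h(\Theta(T))$, which is only $O(\e^{1/2})$ in $L^\beta$, so \autoref{prop3.1} yields $\|\mathcal Y^\e\|_{S^\beta}+\|\mathcal Z^\e\|_{\mathcal M^{2,\beta}}=O(\e^{1/2})$ and nothing better. Since $|f^{'\e}_y-\tilde f^\e_y|+|f^{'\e}_z-\tilde f^\e_z|$ is also $O(\e^{1/2})$ outside $E_\e$, the product stays at $O(\e)$; ``moving the $O(\sqrt\e)$ prefactor to the left'' does not help because that prefactor multiplies the whole trajectory $\mathcal Y^\e(\cdot),\mathcal Z^\e(\cdot)$, not just $\mathcal Y^\e(0)$. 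This is precisely the obstruction the introduction flags: one cannot upgrade \autoref{y11} to $o(\e^{\beta/2})$, so any residual of the form $(\text{prefactor }O(\sqrt\e))\times(\hat y^\e \text{ or }\mathcal Y^\e)$ is stuck at $O(\e)$.

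The paper avoids this by applying It\^o to $\tilde\Gamma^\e\mathcal Y^\e$ \emph{directly}, not separately to $\tilde\Gamma^\e\hat y^\e$ and $\tilde\Gamma\hat y$. The point is to first rewrite the generator of $\mathcal Y^\e$ so that its recursive part is exactly $\tilde f^\e_y\mathcal Y^\e+\tilde f^\e_z{}^\top\mathcal Z^\e$ (this uses $U_2=\tilde f^\e_y\hat y^\e+\tilde f^\e_z{}^\top\hat z^\e$ and then $\hat y^\e=\hat y+\mathcal Y^\e$). The integrating factor $\tilde\Gamma^\e$ then cancels that recursive part \emph{exactly}, and the surviving cross term is $\tilde\Delta f'^\e=(\tilde f^\e_y-f_y)\hat y+(\tilde f^\e_z-f_z)^\top\hat z$, which pairs the $O(\sqrt\e)$ prefactor with $\hat y,\hat z=O(\e)$ from \autoref{est-I}, giving $o(\e)$. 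In other words, the decisive trick is to arrange the decomposition so that the small multiplier hits $\hat y,\hat z$ rather than $\hat y^\e$ or $\mathcal Y^\e$. Your Taylor-expansion step for $\bar F^{'\e}\Delta X^\e$ would, once fleshed out, also produce the paper's $U_1$ and $\tilde\Delta f''^\e$ terms (the latter needing dominated convergence as in the paper's Step~4, and the former requiring the further $\hat z^\e$-on-$E_\e$ subsequence extraction of the paper's Step~2); those are tractable, but they do not resolve the $R^\e$ gap above.
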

\begin{proof}
Recall that $\hat{y}^\e(t) \triangleq y^\e(t) - y^*(t)$ 
,
$\hat{z}^\e(t) \triangleq z^\e(t) - z^*(t)$, and $(\hat{y}, \hat{z})$ is the unique solution of \eqref{y5}.
We set
$
\mathcal{Y}^\e(t) \triangleq \hat{y}^\e(t)-\hat{y}(t)$,$ \mathcal{Z}^\e(t) \triangleq \hat{z}^\e(t)-\hat{z}(t).
$
Then, by Taylor's expansion, it is easy to check that
\begin{equation}\label{3.54}
\begin{aligned}
\mathcal{Y}^\e(t) = &\Pi(T) + \int_{t}^{T} \Big\{U_{1}(s)+ U_2(s) + U_3(s)  \\
&-f_y(s) \hat{y}(s) - f_z(s)^\top \hat{z}(s)  -\tilde{\Gamma}(s)^{-1}I(s)\Big\}ds - \int_{t}^{T}\mathcal{Z}^\e(s)dW(s)\\
=&\Pi(T) + \int_{t}^{T}\Big\{\tilde{f}^\e_y(s) \mathcal{Y}^\e(s) +\tilde{f}^\e_z(s)^\top \mathcal{Z}^\e(s) + \Lambda(s) - \tilde{\Gamma}(s)^{-1}I(s)  \\
&+ \tilde{\Delta} f'^\e(s) + \tilde{\Delta} f''^\e (s)\Big\}ds - \int_{t}^{T}\mathcal{Z}^\e(s)dW(s),
\end{aligned}
\end{equation}
where $\tilde{f}^\e_y(s)$ and $\tilde{f}^\e_z(s)$ are defined in \eqref{fze}, $\tilde{\Gamma}(s)$ is defined in \eqref{gamma}, and $I(s)$ is defined in \eqref{Ke}, 
\begin{align*}
&U_1(s) \triangleq f(s, x^\e(s), x^\e_\delta(s), \tilde{x}^\e(s), y^\e(s),z^\e(s),  u^\e(s), \mu^\e(s)) \\
& \q\q\q \q - f\big(s, \bar{x}(s), \bar{x}_\delta(s),\bar{\tilde{x}}(s), y^\e(s),   z^\e(s),  u^*(s), \mu^*(s)\big) - \Delta f(s),
&
\\
&	U_2(s) \triangleq f\big(s, \bar{x}(s), \bar{x}_\delta(s),\bar{\tilde{x}}(s), y^\e(s),   z^\e(s),  u^*(s), \mu^*(s)\big)  \\
& \q\q\q \q- f\big(s, \bar{x}(s), \bar{x}_\delta(s),\bar{\tilde{x}}(s), y^*(s),  z^*(s),  u^*(s), \mu^*(s)\big) = \tilde{f}^\e_y(s) \hat{y}^\e(s) + \tilde{f}^\e_z(s)\hat{z}^\e(s),\\
& U_3(s) \triangleq f(s,  \bar{x}(s), \bar{x}_\delta(s),\bar{\tilde{x}}(s),  y^*(s), z^*(s) ,  u^*(s), \mu^*(s))\\
&\q\q\q\q- f(s, x^*(s), x_\delta^*(s), \tilde{x}^*(s), y^*(s), z^*(s) ,  u^*(s), \mu^*(s))+\Delta f(s)  \\
&\q\q= \bar{F}(s)\big(X_1(s) +X_2(s)\big) + \frac{1}{2}\big(X_1(s)+X_2(s)\big)^\top \tilde{F}^\e(s) \big(X_1(s)+X_2(s)\big)+\Delta f(s)
= \Lambda(s) + \tilde{\Delta}f^{''\e}(s),\\
%
%
%
&\bar{x}(s) \triangleq  x^*(s)+x_1(s)+x_2(s), \q \bar{x}_{\delta}(s) \triangleq x_\delta^*(s)+x_{\delta,1}(s)1_{(\delta, \i)}(s)+x_{\delta,2}(s)1_{(\delta, \i)}(s), \\
&\bar{\tilde{x}}(s) \triangleq \tilde{x}^*(s)+\tilde{x}_1(s)+\tilde{x}_2(s),\\
&\Pi(T) \triangleq h\big(x^\e(T), x_\delta^\e(T), \tilde{x}^\e(T)\big) - h\big(x^*(T), x^*_{\delta}(T), \tilde{x}^*(T)\big),\\
&\Lambda (s) \triangleq \bar{F}(s)\big(X_1(s) +X_2(s)\big) + \frac{1}{2}\big(X_1(s)+X_2(s)\big)^\top F(s) \big(X_1(s)+X_2(s)\big)+\Delta f(s),
\,\,\,\,\,\,\,\,\,\,\,\,\,\,\,\,\,\,\,\,\,\,\,\,\,\,\,\,\,\,\,\,\,\,\,\,\,\,\,\,\,\,\,\,\,\,\,\,\,\,\,\,\,\,\,\,\,\,\,\,\,\,\,\,\,\,\,\,\,\,\,\,\,\,\,\,\,\,\,\,\,\,\,\,\,\,\,\,\,\,\,\,\,\,\,\,\,\,\,\,\,\,\,\,\,\,\,\,\,\,\,\,\,\,\,\,\,\,\,\,\,\,\,\,\,\,\,\,\,\,\,\,\,\,\,\,\,\,\,\,\,\,\,\,\,\,\,\,\,\,\,\,\,\,\,\,\,\,\,\,\,\,\,\,\,\,\,\,\,\,\,\,\,\,\,\,\,\,\,\,\,\,\,\,\,\,\,\,\,\,\,\,\,\,\,\,\,\,\,\,\,\,\,\,\,\,\,\,\,\,\,\,\,\,\,\,\,\,\,\,\,\,\,\,\,\,\,\,\,\,\,\,\,\,\,\,\,\,\,\,\,\,\,
\end{align*}
\begin{equation}
	\begin{aligned}
		&\tilde{F}^\e(s) \triangleq
		\begin{bmatrix}
			\tilde{f}^\e_{xx}(s) & \tilde{f}^\e_{ xx_\delta}(s) & \tilde{f}^\e_{x\tilde{x}}(s)\\ 
			\tilde{f}^\e_{x_\delta x}(s) & \tilde{f}^\e_{ x_\delta x_\delta}(s) & \tilde{f}^\e_{x_\delta\tilde{x}}(s)\\
			\tilde{f}^\e_{\tilde{x}x}(s) & \tilde{f}^\e_{ \tilde{x}x_\delta}(s) & \tilde{f}^\e_{\tilde{x}\tilde{x}}(s)
		\end{bmatrix},\,\,\,\,\,\,\,\,\,\,\,\,\,\,\,\,\,\,\,\,\,\,\,\,\,\,\,\,\,\,\,\,\,\,\,\,\,\,\,\,\,\,\,\,\,\,\,\,\,\,\,\,\,\,\,\,\,\,\,\,\,\,\,\,\,\,\,\,\,\,\,\,\,\,\,\,\,\,\,\,\,\,\,\,\,\,\,\,\,\,\,\,\,\,\,\,\,\,\,\,\,\,\,\,\,\,\,\,\,\,\,\,\,\,\,\,\,\,\,\,\,\,\,\,\,\,\,\,\,\,\,\,\,\,\,\,\,\,\,\,\,\,\,\,\,\,\,\,\,\,\,\,\,\,\,\,\,\,\,\,\,\,\,\,\,\,\,\,\,\,\,\,\,\,\,\,\,\,\,\,\,\,\,\,\,\,\,\,\,
&\Lambda(s) \triangleq \bar{F}(s)\big(X_1(s) +X_2(s)\big)
+ \frac{1}{2}\big(X_1(s)+X_2(s)\big)^\top F(s) \big(X_1(s)+X_2(s)\big)+\Delta f(s),\\
&\tilde{\Delta} f'^\e(s) \triangleq \big(\tilde{f}^\e_y(s)- f_y(s)\big)\hat{y}(s) + \big(\tilde{f}^\e_z(s)- f_z(s)\big)^\top \hat{z}(s),\\
&\tilde{\Delta} f''^\e(s) \triangleq \frac{1}{2}\big(X_1(s)+X_2(s)\big)^\top \big(\tilde{F}^\e(s) - F(s)\big) \big(X_1(s)+X_2(s)\big), \,\, \,\,\,\,\,\,\,\, \,\,\,\,\,\,\,\, \,\,\,\,\,\,\,\, \,\,\,\,\,\,\,\, \,\,\,\,\,\,\,\, \,\,\,\,\,\,\,\, \,\,\,\,\,\,\,\, \,\,\,\,\,\,\,\, \,\,\,\,\,\,\,\,\,\,\,\,\,\,\,\,\,\,\,\,\,\,\,\,\,\,\,\,\,\,   \\
\end{aligned}
\end{equation}
\begin{equation}
\begin{aligned}
&\tilde{f}^\e_{xx}(s) \triangleq \int_{0}^{1}2\theta f_{xx}(s, x^*(s) + \theta(x_1(s)+x_2(s)), x_{\delta}^*(s) + \theta(x_{\delta,1}(s)1_{(\delta, \i)}(s)+x_{\delta, 2}(s)1_{(\delta, \i)}(s)),\\
&\q\q\q\q\q\q\q\q\q\tilde{x}^*(s) + \theta(\tilde{x}_1(s)+\tilde{x}_2(s)), y^*(s), z^*(s), u^*(s), \mu^*(s))ds,
\\
&\hb{$\tilde{f}^\e_{ xx_\delta}(s), \tilde{f}^\e_{ x\tilde{x}}(s),
	\tilde{f}^\e_{ x_\delta x}(s), \tilde{f}^\e_{ x_\delta x_\delta}(s),
	\tilde{f}^\e_{ x_\delta \tilde{x}}(s),
	\tilde{f}^\e_{ \tilde{x}x}(s), \tilde{f}^\e_{ \tilde{x}x_\delta}(s),\tilde{f}^\e_{ \tilde{x}\tilde{x}}(s)$ are defined similarly, $X_1$ and $X_2$ } \\
&\hb{are the unique solutions of \eqref{X1} and \eqref{X2}, respectively, and $\bar{F}$, $F$ are defined in \eqref{F}.}
\end{aligned}
\end{equation}
%
%
%
%
Recall that $\tilde{\Gamma}^\e(t) \triangleq \exp\Big\{\int_{0}^{t}\tilde{f}^\e_{y}(s)ds\Big\} \mathcal{E}\Big(\int_{0}^{t}\tilde{f}^\e_z(s)^\top dW(s)\Big), t\in [0,T]$. It is easy to check that
$\tilde{\Gamma}^\e$ satisfies the following SDE:
\begin{align*}
\tilde{\Gamma}^\e(t) = 1+\int_{0}^{t}\tilde{f}^\e_y(s)\tilde{\Gamma}^\e(s)ds + \int_{0}^{t} \tilde{\Gamma}^\e(s)\tilde{f}^\e_z(s)^\top dW(s),\q t\in [0, T].
\end{align*}
Then, by applying It\^{o}'s formula to $\tilde{\Gamma}^\e \mathcal{Y}^\e$ on $[0, T]$ and take expectation, we deduce that
\begin{equation}\label{y}
\begin{aligned}
&\mathcal{Y}^\e(0) = \dbE\Big[\tilde{\Gamma}^\e(T)\Pi(T) + \int_{0}^{T}\tilde{\Gamma}^\e(t)\Big\{\Lambda(t) + U_{1}(t)  - \tilde{\Gamma}(t)^{-1}I(t) + \tilde{\Delta} f'^\e(t) + \tilde{\Delta} f''^\e (t)\Big\}dt\Big] \\
&= \dbE\Big[\tilde{\Gamma}^\e(T)\Pi(T) + \int_{0}^{T}\tilde{\Gamma}^\e(t)\Lambda(t)dt\Big] - \dbE\Big[\int_{0}^{T} \tilde{\Gamma}^\e(t)\tilde{\Gamma}(t)^{-1}I(t) dt\Big]\\
&\q + \dbE\Big[ \int_{0}^{T}\tilde{\Gamma}^\e(t)\Big\{U_{1}(t)  + \tilde{\Delta} f'^\e(t) + \tilde{\Delta} f''^\e (t)\Big\}dt\Big],
\end{aligned}\end{equation}
It then follows from \autoref{h} and \autoref{non-recursive} that for $t_0 \in [0, T] \backslash E_3$,
\begin{align*}
\dbE\Big[\tilde{\Gamma}^\e(T)\Pi(T) + \int_{0}^{T}\tilde{\Gamma}^\e(t)\Lambda(t) dt\Big]  = \dbE \Big[\int_0^T I^\e(t)dt\Big]+o(\e).
\end{align*}
Thus, we deduce that
\begin{equation}\label{est-last}
\begin{aligned}
&|\mathcal{Y}^\e(0)|\leq o(\e) + \dbE\bigg[ \int_{0}^{T}\bigg\{\tilde{\Gamma}^\e(t)\Big(|U_{1}(t)|  + |\tilde{\Delta} f'^\e(t)| + |\tilde{\Delta} f''^\e (t)|\Big)+  |I^\e(t)-\tilde{\Gamma}^\e(t)\tilde{\Gamma}(t)^{-1}I(t)|\bigg\}dt\bigg]\\
& \leq o(\e) + \dbE\bigg[ \int_{0}^{T}\bigg\{|I^\e(t)-I(t)| + \big|1-\tilde{\Gamma}^\e(t)\tilde{\Gamma}(t)^{-1}\big||I(t)| + \tilde{\Gamma}^\e(t)\Big(|U_{1}(t)|  + |\tilde{\Delta} f'^\e(t)| + |\tilde{\Delta} f''^\e (t)|\Big) \bigg\}dt\bigg].
\end{aligned}\end{equation}
Next, we will estimate the right side of \eqref{est-last}.

{\bf Step 1:} for the terms
$
\dbE\Big[ \int_{0}^{T}  \big|1-\tilde{\Gamma}^\e(t)\tilde{\Gamma}(t)^{-1}\big||I(t)|dt\Big] 
$ 
and 
$
\dbE\Big[  \int_{0}^{T}|I^{\e}(t)-I(t)|dt\Big]
$.
On one hand, by H\"{o}lder's inequality, \autoref{est-II}, and \autoref{est-Gamma}, we deduce that for $1 < \beta < 2$ and $t_0 \in [0, T] \backslash E^\beta_2$,
\begin{equation}\label{22}
	\begin{aligned}
		&\dbE\Big[ \int_{0}^{T} \big|1-\tilde{\Gamma}^\e(t)\tilde{\Gamma}(t)^{-1}\big||I(t)|dt\Big] \\
		&\leq 
		\Big\{\dbE\Big[ \int_{0}^{T}   \big|1-\tilde{\Gamma}^\e(t)\tilde{\Gamma}(t)^{-1}\big|^{\frac{\beta}{\beta -1}}dt\Big]\Big\}^{\frac{\beta -1}{\beta}} \cd 
		\Big\{\dbE\Big[ \int_{0}^{T} |I(t)|^{\beta}dt\Big]\Big\}^{\frac{1}{\beta}} \leq K\e^{\frac{1}{2} + \frac{1}{\beta}}.
\end{aligned}\end{equation}
Let $\beta = \frac{3}{2}$ in \eqref{22}, we deduce that $
\dbE\Big[ \int_{0}^{T}  \big|1-\tilde{\Gamma}^\e(t)\tilde{\Gamma}(t)^{-1}\big||I(t)|dt\Big] = o(\e).
$ 
On the other hand,
by \autoref{est-I-I}, we have
that there exists a decreasing subsequence $\{\e_m\}_{m=1}^\i \subset (0, \delta)$ such that for $t_0 \in [0,T] \backslash E_5$,
\begin{align}\label{23}
	\lim\limits_{m\rightarrow \i}\e_m = 0\q \hb{and} \q
	\dbE\Big[  \int_{0}^{T}|I^{\e_m}(t)-I(t)|dt\Big] = o(\e_m).
\end{align}

{\bf Step 2:} for the term $\dbE\Big[\int_{0}^{T}\tilde{\Gamma}^{\e}(s)|U_{1}(s)|ds\Big]$,
denote 
\begin{align*}
	&U'_{11}(s) \triangleq f(s, x^\e(s), x^\e_\delta(s), \tilde{x}^\e(s), y^\e(s), z^\e(s),  u(s), \mu^*(s)) 
	- f\big(s, x^*(s), x_\delta^*(s), \tilde{x}^*(s), y^*(s),   z^*(s),  u(s), \mu^*(s)\big),\\
	&U''_{11}(s) \triangleq f(s, x^\e(s), x^\e_\delta(s), \tilde{x}^\e(s), y^\e(s), z^\e(s),  u^*(s), \mu(s)) 
	- f\big(s, x^*(s), x_\delta^*(s), \tilde{x}^*(s), y^*(s),   z^*(s),  u^*(s), \mu(s)\big),\\
	& U_{12}(s) \triangleq  f\big(s, \bar{x}(s), \bar{x}_\delta(s), \bar{\tilde{x}}(s), y^\e(s),   z^\e(s),  u^*(s), \mu^*(s)\big)  
	- f\big(s, x^*(s), x_\delta^*(s), \tilde{x}^*(s), y^*(s),   z^*(s),  u^*(s), \mu^*(s)\big),\\
	&V_{1}(s) \triangleq 
	\big\{f(s, x^\e(s), x^\e_\delta(s), \tilde{x}^\e(s), y^\e(s),z^\e(s),
	u^*(s), \mu^*(s)) \\
	&\q\q\q\q- f(s, \bar{x}(s), \bar{x}^\e_\delta(s), \bar{\tilde{x}}^\e(s), y^\e(s), z^\e(s), u^*(s))\big\}1_{[0, t_0]\bigcup [t_0+\e, t_0+\delta] \bigcup [t_0+\delta+\e, T]}(s). 
	\,\,\,\,\,\,\,\,\,\,\,\,\,\,\,\,\,\,\,\,\,\,\,\,\,\,\,\,\,\,\,\,\,\,\,\,\,\,\,\,\,\,\,\,\,\,\,\,\,\,\,\,\,\,\,\,\,\,\,\,\,\,\,\,\,\,\,\,\,\,\,\,\,\,\,\,\,\,\,\,\,\,\,\,\,\,\,\,\,\,\,\,\,\,\,\,\,\,\,\,\,\,\,\,\,\,\,\,\,\,\,\,\,\,\,\,\,\,\,\,
\end{align*}
Then, we have
\begin{align}\label{U1}
U_{1 }(s) = \{U'_{11}(s) - U_{12}(s)\}1_{[t_0, t_0 + \e]}(s) +  \{U''_{11}(s) - U_{12}(s)\}1_{[t_0+\delta, t_0+\delta + \e]}(s) + V_{1}(s).
\end{align} 
On one hand, by \autoref{A1}, we deduce that 
\begin{equation}\label{U'1}
\begin{aligned}
&\dbE\Big[\int_{t_0}^{t_0 + \e}\tilde{\Gamma}^{\e}(s)|U'_{11}(s) - U_{12}(s)|ds\Big] 
\leq K\dbE\Big[\int_{t_0}^{t_0 + \e}\tilde{\Gamma}^{\e}(t)\Big\{|x^\e(t) - x^*(t)| + |x_\delta^\e(t) - x^*_\delta(t)|  \\  
&\q+|\tilde{x}^\e(t) - \tilde{x}^*(t)|+ |x_1(t)|+|x_2(t)| +|x_{\delta,1}(t)|
+|x_{\delta,2}(t)|+|\tilde{x}_1(t)|+|\tilde{x}_2(t)|+ |\hat{y}^\e(t)|+ |\hat{z}^\e(t)| \Big\}dt\Big].
\end{aligned}\end{equation}
Furthermore, by \autoref{y11} and a similar deduction of \autoref{est-q}, we obtain that there exist a set $E_6$ and a decreasing subsequence $\{\e_{m_i}\}_{i=1}^\i \subset \{\e_{m}\}_{m=1}^\i $ such that $m(E_6) = 0$ and for $t_0 \in [0, T] \backslash E_6$,
\begin{align*}
	\lim\limits_{i\rightarrow \i}\int_{t_0}^{t_0 + \e_{m_i}}\frac{\dbE\big[
		|\hat{z}^{\e_{m_i}}(t)|^2\big]}{\e_{m_i}} dt = 0.
\end{align*}
Thus, we know that 
\begin{align*}
	\lim\limits_{i\rightarrow \i}\dbE\Big[\int_{t_0}^{t_0 + \e_{m_i}}
	\tilde{\Gamma}^{\e_{m_i}}(t)|\hat{z}^{\e_{m_i}}(t)|dt\Big] = o(\e_{m_i}). 
\end{align*}
In addition, by \autoref{A1} and \autoref{state}, we deduce that 
\begin{align*}
&\dbE\Big[\int_{t_0}^{t_0 + \e}
\tilde{\Gamma}^{\e}(t)|x_1(t)|dt\Big]
\leq \|\Gamma^{\e}\|_{S_{\dbF}^{\frac{\beta}{\beta-1}}(0, T; \dbR)} \|x_1\|_{S_{\dbF}^{\beta}(0, T; \dbR)}\e \leq K\e^{\frac{3}{2}}.
\end{align*}
The other terms can be estimated similarly.
Thus, we have that 
$\dbE\Big[\int_{t_0}^{t_0 + \e_{i}}\tilde{\Gamma}^{\e}(s)|U'_{11}(s) - U_{12}(s)|ds\Big] = o(\e_{i}).$
Similarly, we deduce that there exists a decreasing subsequence $\{\e_{m_{i_j}}\}_{j=1}^\i \subset \{\e_{m_i}\}_{i=1}^\i$ such that 
$\dbE\Big[\int_{t_0+\delta}^{t_0+\delta + \e_{m_{i_j}}}\tilde{\Gamma}^{\e_{m_{i_j}}}(s)|U''_{11}(s) - U_{12}(s)|ds\Big] = o(\e_{m_{i_j}}).$
On the other hand, by \autoref{A1} and \autoref{state}, we deduce that for any $\beta >1$,
\begin{align*}
&\dbE\Big[\int_{0}^{T}
\tilde{\Gamma}^{\e}(t)|V_{1}|dt\Big] \leq
K\|\Gamma^{\e}\|_{S_{\dbF}^{\frac{\beta}{\beta-1}}(0, T; \dbR)}
\Big\{\dbE\Big[\Big(\int_{0}^{T}
|V_{1}|dt\Big)^\beta\Big] \Big\}^{\frac{1}{\beta}}\\
&\leq K \Big\{\dbE\Big[\Big(\int_{0}^{T}
|x^\e(t) - \bar{x}(t)|+ |x_\delta^\e(t) - \bar{x}_\delta(t)|+|\tilde{x}^\e(t) - \bar{\tilde{x}}(t)|dt\Big)^\beta\Big]\Big\}^{\frac{1}{\beta}}  = o(\e).
\end{align*}
Therefore, we obtain that 
$\dbE\Big[\int_{0}^{T}\tilde{\Gamma}^{\e_{m_{i_j}}}(s)|U_{1}(s)|ds\Big] = o(\e_{m_{i_j}})$.
For notational convenience, we still denote the subsequence $\{\e_{m_{i_j}}\}_{j=1}^\i$ by $\{\e_j\}_{j=1}^\i$.

{\bf Step 3:} for the term $\dbE\Big[ \int_{0}^{T}\tilde{\Gamma}^\e(t) |\tilde{\Delta} f'^\e(t)|dt\Big]$,
by Talor's expansion and \autoref{A1}, we deduce that
\begin{align}\label{'}
\dbE\Big[ \int_{0}^{T}\tilde{\Gamma}^\e(t) |\tilde{\Delta} f'^\e(t)|dt\Big] &\leq K\dbE\Big[\int_{0}^{T}\tilde{\Gamma}^\e(t)\Big(|x_1(t)|+|x_2(t)| +|x_{\delta,1}(t)| +|x_{\delta,2}(t)|+|\tilde{x}_1(t)|+|\tilde{x}_2(t)|\nonumber\\
&\q\q\q\q\q+ |\hat{y}^\e(t)|+ |\hat{z}^\e(t)| \Big)\cd\big(|\hat{y}(t)| + |\hat{z}(t)|\big)  dt\Big].
\end{align}
We now estimate the term $\dbE\Big[\int_{0}^{T}\tilde{\Gamma}^\e(t)  |\hat{z}^\e(t)| |\hat{z}(t)| dt\Big]$.
By H\"{o}lder's inequality, a similar deduction of \eqref{G1}, \autoref{est-I}, and \autoref{y11}, we deduce that for $t_0 \in [0, T] \backslash E^{\frac{25}{16}}_1 $ and $\rho = \beta = \frac{5}{4}$,
\begin{equation}\label{del}
\begin{aligned}
&\dbE\Big[\int_{0}^{T}\tilde{\Gamma}^\e(t)  |\hat{z}^\e(t)||  \hat{z}(t)| dt\Big] \\
&\leq 
\|\tilde{\Gamma}^\e\|_{S^{{\frac{\beta}{\beta - 1}}}_{\dbF}(0, T; \dbR)}
\cd 
\Big\{ \dbE\Big[\Big(\int_{0}^{T}|\hat{z}^\e(t)|^2dt\Big)^{\frac{\beta}{2}}
\cd
\Big(\int_{0}^{T}|\hat{z}(t)|^2dt\Big)^{\frac{\beta}{2}}\Big]\Big\}^{\frac{1}{\beta}}\\
&\leq K
\Big\{ \dbE\Big[\Big(\int_{0}^{T}|\hat{z}^\e(t)|^2dt\Big)^{\frac{\rho\beta}{2(\rho-1)}}\Big]\Big\}^{\frac{\rho-1}{\rho\beta}} 
\cd
\Big\{ \dbE\Big[\Big(\int_{0}^{T}|\hat{z}(t)|^2dt\Big)^{\frac{\rho\beta}{2}}\Big]\Big\}^{\frac{1}{\rho\beta}}
\leq K\e^{\frac{3}{2} } .
\end{aligned}\end{equation}
%
%
Other terms in the right side of \eqref{'} can be estimated in $[0, T] \backslash E_7$ similarly, where $m(E_7) = 0$. Thus, we obtain that for any $t_0 \in [0, T] \backslash \{E_1 \mathop{\bigcup} E_7\}$, $\dbE\Big[ \int_{0}^{T}\tilde{\Gamma}^\e(t) |\tilde{\Delta} f'^\e(t)|dt\Big]= o(\e)$.

{\bf Step 4:} for the term $\dbE\Big[ \int_{0}^{T}\tilde{\Gamma}^\e(t) |\tilde{\Delta} f''^\e(t)|dt\Big]$, by H\"{o}lder's inequality and a similar deduction of \eqref{G1}, we have
\begin{align*}
\dbE\Big[ \int_{0}^{T}\tilde{\Gamma}^\e(t) |\tilde{\Delta} f''^\e(t)|dt\Big] 
\leq \|\tilde{\Gamma}^\e\|_{S^{2}_{\dbF}(0,T;\dbR)} \Big(\dbE\Big[ \int_{0}^{T} |\tilde{\Delta} f''^\e(t)|^2dt\Big]\Big)^{\frac{1}{2}} \leq K\Big(\dbE\Big[ \int_{0}^{T} |\tilde{\Delta} f''^\e(t)|^2dt\Big]\Big)^{\frac{1}{2}}.
\end{align*} 
By (iii) of \autoref{A1}, H\"{o}lder's inequality, \autoref{state}, and Dominated convergence theorem, we deduce that 
\begin{align*}
&\lim\limits_{\e \rightarrow 0}\frac{1}{\e}\Big(\dbE\Big[ \int_{0}^{T} |\tilde{\Delta} f''^\e(t)|^2dt\Big]\Big)^{\frac{1}{2}} 
\leq  K\lim\limits_{\e \rightarrow 0}\Big(\dbE\Big[ \int_{0}^{T} |\tilde{F}^{\e}(t)-F(t)|^3dt\Big]\Big)^{\frac{1}{3}}\\
&= K\Big(\dbE\Big[ \int_{0}^{T}\lim\limits_{\e \rightarrow 0} |\tilde{F}^{\e}(t)-F(t)|^3dt\Big]\Big)^{\frac{1}{3}}=0.
\end{align*}
Thus, we deduce that $\dbE\Big[ \int_{0}^{T}\tilde{\Gamma}^\e(t) |\tilde{\Delta} f''^\e(t)|dt\Big] = o(\e)$.

Therefore, let $E = E^{\frac{25}{16}}_1 \bigcup E^{\frac{3}{2}}_2 \bigcup E_3 \bigcup E_5 \bigcup E_6 \bigcup E_7$, in view of \eqref{est-last} and combining steps 1-4, we conclude that \eqref{last} holds.
This completes the proof.
\end{proof}
\subsection{Maximum principle}
Based on the above preparation, now we can state the general maximum principle. Define the Hamiltonian function: $\mathcal{H}: [0, T] \times \Om \times \dbR^n \times \dbR^n \times \dbR^n \times \dbR \times \dbR^d \times \dbR^n \times \dbR^{n\times d} \times \dbR^{n\times n} \times \dbR^k \times \dbR^k$ by
\begin{align*}
&\mathcal{H}(t,x,x_\delta, \tilde{x}, y,z,p,q, \mathcal{P}, u, \mu) \triangleq G(t,x,x_\delta, \tilde{x}, y,z,p,q, u, \mu) + \frac{1}{2}\sum_{i=1}^{d}\Big(\sigma^i(t,x,x_\delta, \tilde{x},  u, \mu) \\
&- \sigma^i(t,x^*(t),x^*_\delta(t), \tilde{x}^*(t),  u^*(t), \mu^*(t))\Big)^\top \mathcal{P}\Big(\sigma^i(t,x,x_\delta, \tilde{x},  u, \mu) - \sigma^i(t,x^*(t),x^*_\delta(t), \tilde{x}^*(t),  u^*(t), \mu^*(t))\Big),
\end{align*}
where $G$ is defined in \eqref{M}.
Now we present the stochastic maximum principle. Recall that $t_0$ is the left endpoint for the small interval of perturbation and $\delta$ is the length of the delay interval.
\begin{theorem}\label{SMP-thm}
Suppose that \autoref{A1}  holds. Let $u^*(\cd) \in \mathcal{U}_{ad}$ be optimal control and $(x^*(\cd), y^*(\cd), z^*(\cd))$ be the corresponding state trajectories of \eqref{2.1}. Then the following stochastic maximum principle holds:
\begin{align}\label{SMP}
\Delta \mathcal{H}(t) + \dbE_t[\Delta\tilde{\mathcal{H}}(t+\delta)1_{[0, T-\delta]}(t)] \geq 0, \q \forall u\in U, \q a.e., \q a.s.,
\end{align}
where
\begin{align*}	
&\Delta \mathcal{H}(t) = \mathcal{H}(t,x^*(t), x_\delta^*(t), \tilde{x}^*(t), y^*(t), z^*(t), p(t), q(t), \mathcal{P}(t), u, \mu^*(t) ) \\
&\q\q\q\q- \mathcal{H}(t,x^*(t), x_\delta^*(t), \tilde{x}^*(t) , y^*(t), z^*(t), p(t), q(t),\mathcal{P}(t), u^*(t), \mu^*(t) ), \\
&\Delta \tilde{\mathcal{H}}(t) = \mathcal{H}(t,x^*(t), x_\delta^*(t), \tilde{x}^*(t),y^*(t), z^*(t), p(t), q(t),\mathcal{P}(t), u^*(t), u ) \\
&\q\q\q\q- \mathcal{H}(t,x^*(t), x_\delta^*(t), \tilde{x}^*(t),y^*(t), z^*(t), p(t), q(t),\mathcal{P}(t), u^*(t), \mu^*(t) ),
\end{align*}
$\big(p(\cd), q(\cd)\big)$, $\mathcal{P}(\cd)$ are defined in \eqref{ade} and similarly in \eqref{P1}, respectively.
\end{theorem}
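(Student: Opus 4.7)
The plan is to combine the optimality of $u^*$ with the $o(\e_j)$-identification $y^{\e_j}(0)-y^*(0)=\hat{y}(0)+o(\e_j)$ from \autoref{estimate-last}, reducing the problem to showing $\hat{y}(0)\geq o(\e_j)$ along the chosen subsequence, and then, after solving the linear BSDE \eqref{y5} for $\hat{y}(0)$ in closed form, extract the pointwise variational inequality \eqref{SMP} by Lebesgue differentiation at $t_0$.

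\textbf{Computing $\hat{y}(0)$.} The technical heart is to solve \eqref{y5} using $\tilde{\Gamma}$ as an integrating factor. Since $\tilde{\Gamma}$ in \eqref{gamma} satisfies $d\tilde{\Gamma}(t)=\tilde{\Gamma}(t)f_y(t)dt+\tilde{\Gamma}(t)f_z(t)^\top dW(t)$ and the terminal value in \eqref{y5} is $\hat{y}(T)=0$, It\^{o}'s formula applied to $\tilde{\Gamma}(t)\hat{y}(t)$ produces $d(\tilde{\Gamma}\hat{y})(t)=-I(t)dt+\hbox{(martingale)}$, the $f_y\hat{y}$-drift cancelling against $\hat{y}\,d\tilde{\Gamma}$ and the $f_z^\top\hat{z}$-drift cancelling against $d\langle\tilde{\Gamma},\hat{y}\rangle$. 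Taking expectation yields $\hat{y}(0)=\dbE\big[\int_0^T I(s)ds\big]$. Comparing the definitions of $I$ in \eqref{Ke}, of $G$ in \eqref{Me}, and of $\mathcal{H}$ in the theorem statement, one recognises $I(s)=\Delta\mathcal{H}(s)1_{[t_0,t_0+\e_j]}(s)+\Delta\tilde{\mathcal{H}}(s)1_{[t_0+\delta,t_0+\delta+\e_j]}(s)1_{[0,T-\delta]}(t_0)$.

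\textbf{Pointwise limit.} Shifting the second integral by $-\delta$ and using the tower property transforms $\hat{y}(0)$ into $\dbE\big[\int_{t_0}^{t_0+\e_j}\{\Delta\mathcal{H}(r)+\dbE_r[\Delta\tilde{\mathcal{H}}(r+\delta)]1_{[0,T-\delta]}(t_0)\}dr\big]$. Since $u^*$ is optimal, this quantity is $\geq o(\e_j)$; dividing by $\e_j$ and invoking \autoref{Lebes} on the integrable map $r\mapsto\dbE\big[\Delta\mathcal{H}(r)+\dbE_r[\Delta\tilde{\mathcal{H}}(r+\delta)]1_{[0,T-\delta]}(r)\big]$ produces, at almost every $t_0$, the inequality $\dbE\big[\Delta\mathcal{H}(t_0)+\dbE_{t_0}[\Delta\tilde{\mathcal{H}}(t_0+\delta)]1_{[0,T-\delta]}(t_0)\big]\geq 0$. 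Localising the spike on an arbitrary $A\in\sF_{t_0}$ (i.e.\ taking $u^\e$ to equal a fixed deterministic $v\in U$ on $A\times[t_0,t_0+\e]$ and $u^*$ elsewhere) and varying $A$ removes the outer expectation and upgrades the inequality to the $\dbP$-a.s.\ pointwise condition \eqref{SMP}.

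\textbf{Main obstacle.} The most delicate point will be the bookkeeping of the exceptional sets and subsequences: \autoref{estimate-last} yields the $o(\e_j)$-estimate only along a $u$-dependent subsequence and outside a $u$-dependent $t_0$-null set, and the Lebesgue-point argument carves out a further null set. The plan is to fix a countable dense subset $\{v_k\}_{k}\subset U$, to produce the inequality for each $v_k$ along a common diagonal subsequence $\{\e_j\}$ with a common null set, and then to invoke the continuity of $\mathcal{H}$ in $u$ (ensured by \autoref{A1}) together with separability to extend the inequality to every $v\in U$, thereby concluding \eqref{SMP}.
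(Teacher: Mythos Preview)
Your proposal is correct and follows essentially the same route as the paper: invoke \autoref{estimate-last}, solve the linear BSDE \eqref{y5} via the integrating factor $\tilde\Gamma$ to obtain $\hat y(0)=\dbE\big[\int_0^T I(s)\,ds\big]$, identify $I$ with the Hamiltonian increments, and pass to the pointwise inequality by Lebesgue differentiation. The paper's own proof is far terser---it records the chain $0\le y^{\e_j}(0)-y^*(0)=\hat y(0)+o(\e_j)=\dbE[\int_0^T I]\,+o(\e_j)$ and then declares that \eqref{SMP} ``follows immediately''---whereas you spell out the It\^o computation for $\tilde\Gamma\hat y$, the $\delta$-shift plus tower property, and the separability/localisation bookkeeping needed to remove the outer expectation and to make the exceptional $t_0$-set independent of $u$; these last points are standard (cf.\ Yong--Zhou) and your sketch of them is adequate.
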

\begin{proof}
In view of \eqref{y5} and by \autoref{estimate-last}, we obtain that there exists a decreasing subsequence $\{\e_j\}_{j=1}^\i \subset (0, \delta)$ such that $ \lim\limits_{j\rightarrow \i}\e_j = 0$, and for any $t_0\in [0, T] \backslash E$,
\begin{align*}
0 \leq &J(u^{\e_j}(\cd)) - J(u^*(\cd)) = y^{\e_j}(0) - y^*(0) = \hat{y}(0) + o(\e_j) = \dbE\Big[\int_{0}^{T}I(t)dt\Big]+ o(\e_j) \\
&= \dbE\Big[\int_{0}^{T}\Big\{\Delta \mathcal{H}(t)1_{[t_0, t_0+\e_j]}(t) + \dbE_t[\Delta\tilde{\mathcal{H}}(t+\delta)1_{[0, T-\delta]}(t)]1_{[t_0+\delta, t_0+\delta+\e_j]}(t)\Big\}dt\Big] + o(\e_j),
\end{align*}
which implies \eqref{SMP} immediately.
\end{proof}

\section{Stochastic maximum principle for forward-backward stochastic control systems}\label{section4}
For a more intuitive comparison with the methods of Hu \cite{Hu} and Yong \cite{Yong2010}, in this section we consider the following system:
\begin{equation}\label{4.1}\left\{\begin{aligned}
&dx(t) = b(t, x(t),  u(t))\big)dt
+ \sigma(t, x(t),  u(t))dW(t),\q t\in [0,T];\\
&dy(t) = -f\big(t, x(t), y(t), z(t),  
u(t) \big)dt +  z(t) dW(t),  \quad t\in [0,T]; \\ 
&x(0) = \xi(0), y(T) = h\big(x(T)\big).
\end{aligned}\right.\end{equation}
This system can be viewed as a special case of a delayed forward–backward stochastic control system, where the delay terms in both the forward and backward equations are absent.
We aim to solve problem (O) associated with system \eqref{4.1}.
Next, we  directly state the assumptions, the adjoint equations, and the corresponding stochastic maximum principle. 

Here is the assumption concerning \eqref{4.1}.
\begin{assumption}\label{A3}
\begin{itemize}
\item [$\rm(i)$]  $h$ is twice continuously differentiable in $x$ and  $h_{xx}$ is bounded.
\item [$\rm(ii)$] $b, \sigma$ are twice continuously differentiable in $x$, $b_{x}, \sigma_x,  b_{xx}, \sigma_{xx}$ are bounded;
there exists a constant $L_1>0$ such that 
\begin{align}\label{bb}
|b(t, 0, u)| + |\sigma(t, 0, u)| \leq L_1(1+ |u|).
\end{align}
\item [$\rm(iii)$] $f$ is twice continuously differentiable in $(x, y, z)$; $\partial f, \partial^2f$ are bounded; there exists a constant $ L_2>0$ such that 
\begin{align}
&|f(t, 0,0, 0, u)| \leq L_2(1+|u|),
%
%
\end{align}\end{itemize}
\noindent where $\partial f, \partial^2f$ are the gradient and the Hessian matrice of $f$ with respect to $(x, y, z)$, respectively.
\end{assumption}
Now we present the first-order and second-order adjoint equations, whose forms are similar to those in Peng \cite{peng1990general} or Yong--Zhou \cite{yong1999stochastic}:
\begin{equation}\label{first-ad}\left\{\begin{aligned}
&dp(t) = -\Big\{b_x(t)^\top p(t) + \sum_{i=1}^{d}\sigma_x^i(t)^\top q^i(t) +\tilde{\Gamma}(t)f_x(t)\Big\}dt
+ q(t)dW(t),\q t\in [0,T];\\
&p(T) =  \tilde{\Gamma}(T)h_x\big(x^*(T)\big),
\end{aligned}\right.\end{equation}
and
\begin{equation}\label{sec-ad}\left\{\begin{aligned}
&dP(t) = -\Big\{b_x(t)^\top P(t) + P(t)b_x(t) + \sum_{i=1}^{d}\sigma_x^i(t)^\top P(t)\sigma_x^i(t) 
+\sum_{i=1}^{d}\big\{\sigma_x^i(t)^\top Q^i(t) + Q^i(t)\sigma_x^i(t)\big\}\\
&\q\q\q\q\q\q+G_{xx}(t, x^*(t), y^*(t), z^*(t), u^*(t), p(t), q(t))\Big\}dt
+ \sum_{i=1}^{d}Q^i(t)dW(t),\q t\in [0,T];\\
&P(T) =  \tilde{\Gamma}(T)h_{xx}\big(x^*(T)\big),
\end{aligned}\right.\end{equation}
where 
\begin{equation}\label{gammaa}
\begin{aligned}
&\tilde{\Gamma}(t) \triangleq \exp\Big\{\int_{0}^{t}f_{y}(r)dr\Big\}\mathcal{E}\Big(\int_{0}^{t}f_z(r)^\top dW(r)\Big), \q t\in [0,T],\\
&G(t, x,y,z, u, p, q) \triangleq p^\top b(t, x, u) + \sum_{i=1}^{d}q^{i\top}\sigma^i(t,x,u) + \tilde{\Gamma}(t)f(t,x,y,z,u).
\end{aligned}\end{equation}
\begin{remark}
Let $\tilde{p}(t) = \tilde{\Gamma}(t)^{-1}p(t)$, $\tilde{q}(t) = \tilde{\Gamma}(t)^{-1}(q(t) - p(t)f_z(t))$, $t\in [0, T]$ and $\tilde{P}(t) = \tilde{\Gamma}(t)^{-1}P(t)$, $\tilde{Q}(t) = \tilde{\Gamma}(t)^{-1}(Q(t) - P(t)f_z(t))$, $t\in [0, T]$. Then, by Itô's formula, we obtain that
\begin{equation}\label{first-ad1}\left\{\begin{aligned}
		&d\tilde{p}(t) = -\bigg\{\Big[f_y(t)I_n + \sum_{i=1}^{d}f_{z_i}(t) \big(\sigma_x^i(t)\big)^\top + b_x(t)^\top \Big]\tilde{p}(t) \\
		&\q\q\q\q\q+ \sum_{i=1}^{d}\big[f_{z_i}(t) I_n + \sigma_x^i(t)^\top \big]\tilde{q}^i(t) +f_x(t)\bigg\}dt
		+ \tilde{q}(t)dW(t),\q t\in [0,T];\\
		&\tilde{p}(T) = h_x\big(x^*(T)\big),
	\end{aligned}\right.\end{equation}
and 
\begin{equation}\label{sec-ad1}\left\{\begin{aligned}
		&d\tilde{P}(t) = -\Big\{f_y(t)\tilde{P}(t) + \sum_{i=1}^{d}f_{z_i}(t)\big[\big(\sigma_x^i(t)\big)^\top \tilde{P}(t) + \tilde{P}(t)^\top \sigma^i_x(t)\big] +b_x(t)^\top \tilde{P}(t) + \tilde{P}(t)b_x(t) \\
		&\q\q\q\q\q + \sum_{i=1}^{d}\sigma_x^i(t)^\top \tilde{P}(t)\sigma_x^i(t) 
		+\sum_{i=1}^{d}\big[\sigma_x^i(t)^\top \tilde{Q}^i(t) + \tilde{Q}^i(t)\sigma_x^i(t)\big] + \sum_{i=1}^n b_{xx}^i(t) \tilde{p}^i(t) \\
		&\q\q\q\q\q\q+\sum_{i=1}^{n}\sum_{j=1}^{d} \sigma_{xx}^{ij}(t) \Big(f_{z_j}(t) \tilde{p}^i(t) + \tilde{q}^{ij}(t)\Big) + f_{xx}(t)\Big\}dt
		+ \sum_{i=1}^{d}\tilde{Q}^i(t)dW(t),\q t\in [0,T];\\
		&\tilde{P}(T) =  h_{xx}\big(x^*(T)\big).
	\end{aligned}\right.\end{equation}
As a result, we observe that \eqref{first-ad1} has the same form as the first-order adjoint equation in Hu \cite{Hu}. However, \eqref{sec-ad1} is not of the same form as the second-order adjoint equation in Hu \cite{Hu}.
\end{remark}

Define the Hamiltonian function: $\mathcal{H}: [0, T] \times \Om  \times \dbR^n \times \dbR \times \dbR^d \times \dbR^n \times \dbR^{n\times d} \times \dbR^{n\times n} \times \dbR^k $ by
\begin{align*}
&\mathcal{H}(t,x, y,z,p,q,P, u) \triangleq G(t,x, y,z,p,q, u) + \frac{1}{2}\sum_{i=1}^{d}\Big(\sigma^i(t,x,  u) \\
&- \sigma^i(t,x^*(t),  u^*(t))\Big)^\top P\Big(\sigma^i(t, x, u) - \sigma^i(t,x^*(t),  u^*(t) )\Big).
\end{align*}
Now we present the stochastic maximum principle.
\begin{theorem}\label{SMP-classical}
Suppose that \autoref{A3} holds. Let $u^*(\cd) \in \mathcal{U}[0, T]$ be optimal and $(x^*(\cd), y^*(\cd), z^*(\cd))$ be the corresponding state trajectories of \eqref{4.1}. Then the following stochastic maximum principle holds:
\begin{align}\label{}
\Delta \mathcal{H}(t)  \geq 0, \q \forall u\in U, \q a.e., \q a.s.,
\end{align}
where
\begin{align*}	
&\Delta \mathcal{H}(t) = \mathcal{H}(t,x^*(t), y^*(t), z^*(t), p(t), q(t), P(t), u ) - \mathcal{H}(t,x^*(t),  y^*(t), z^*(t), p(t), q(t),P(t), u^*(t) )
\end{align*}
and $\big(p(\cd), q(\cd)\big)$, $P(\cd)$ are defined in \eqref{first-ad} and\eqref{sec-ad}, respectively.
\end{theorem}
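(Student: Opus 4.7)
The plan is to specialize the new method developed in \autoref{Section3} to the non-delayed setting, which becomes substantially simpler because the Volterra structure disappears entirely. When the delay variables $x_\delta,\tilde{x},\mu$ are absent, the components indexed by ``$1$'' and ``$2$'' in $\lambda,\nu,\eta,\zeta$ and in $P_k$ vanish, and the matrices $A(T,\cd),C^i(T,\cd)$ reduce to their $(1,1)$-blocks $b_x(\cd),\sigma_x^i(\cd)$; the BSVIEs \eqref{adjointe}--\eqref{Adjointe} then collapse to the classical BSDEs \eqref{first-ad}--\eqref{sec-ad}, so that $p(s)=\lambda^0(s)+\dbE_s\int_s^T\eta^0(r)dr$ coincides with the solution of \eqref{first-ad} and the $(1,1)$-block of $P_3$ coincides with $P$ in \eqref{sec-ad}; the constraints \eqref{subjecte} and the symmetry relations are vacuous.

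First I would perform the spike variation $u^\e=u^*+(u-u^*)\mathbf{1}_{[t_0,t_0+\e]}$ and introduce the standard first- and second-order variational equations
\begin{equation*}
\begin{aligned}
dx_1(t)&=\bigl[b_x(t)x_1(t)+\Delta b(t)\bigr]dt+\sum_{i=1}^{d}\bigl[\sigma_x^i(t)x_1(t)+\Delta\sigma^i(t)\bigr]dW^i(t),\\
dx_2(t)&=\bigl[b_x(t)x_2(t)+\tfrac12 x_1(t)^\top b_{xx}(t)x_1(t)\bigr]dt\\
&\q+\sum_{i=1}^{d}\bigl[\sigma_x^i(t)x_2(t)+\tfrac12 x_1(t)^\top\sigma_{xx}^i(t)x_1(t)+\Delta\sigma_x^i(t)x_1(t)\bigr]dW^i(t),
\end{aligned}
\end{equation*}
with $x_1(0)=x_2(0)=0$, together with the moment estimates $\dbE[\sup|x_1|^\beta]=O(\e^{\beta/2})$, $\dbE[\sup|x_2|^\beta]=O(\e^\beta)$ and $\dbE[\sup|x^\e-x^*-x_1-x_2|^\beta]=o(\e^\beta)$ inherited from \autoref{state}. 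Mirroring \eqref{y5}, I would then introduce the variational equation for $\hat{y}:=y^\e-y^*$ in the form
\begin{equation*}
\hat{y}(t)=\int_t^T\bigl\{f_y(s)\hat{y}(s)+f_z(s)^\top\hat{z}(s)+\tilde{\Gamma}(s)^{-1}I(s)\bigr\}ds-\int_t^T\hat{z}(s)^\top dW(s),
\end{equation*}
with $I(s)=\bigl\{\Delta G(s)+\tfrac12\sum_{i=1}^{d}\Delta^*\sigma^i(s)^\top P(s)\Delta^*\sigma^i(s)\bigr\}\mathbf{1}_{[t_0,t_0+\e]}(s)$, where $\Delta G,\Delta^*\sigma^i$ are the obvious non-delayed analogues of \eqref{Ke}--\eqref{delta_sigma}.

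The crucial step is the duality identity corresponding to \autoref{non-recursive}. Since $(p,q)$ and $(P,Q)$ now solve ordinary BSDEs, this identity is obtained by applying It\^o's formula to $p^\e(t)^\top\bigl(x_1(t)+x_2(t)\bigr)+\tfrac12 x_1(t)^\top P^\e(t)x_1(t)$ on $[0,T]$ and taking expectation: every stochastic integral vanishes and, after matching drift terms using the BSDE coefficients of the auxiliary $p^\e,P^\e$, one arrives at
\begin{equation*}
\dbE\Big[\tilde{\Gamma}^\e(T)h_x(x^*(T))\bigl(x_1(T)+x_2(T)\bigr)+\tfrac12 x_1(T)^\top\tilde{\Gamma}^\e(T)h_{xx}(x^*(T))x_1(T)+\int_0^T\tilde{\Gamma}^\e(s)\Lambda(s)\,ds\Big]=\dbE\Big[\int_0^T I^\e(s)\,ds\Big]+o(\e),
\end{equation*}
with $\Lambda,I^\e$ the non-delayed analogues of the corresponding objects in \autoref{non-recursive}. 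Because there is now only a single spike interval $[t_0,t_0+\e]$ and no shifted companion at $t_0+\delta$, the delicate subsequence extraction for $q^\e-q$ carried out in \autoref{est-q} and \autoref{est-I-I} is unnecessary; a direct $L^2$ estimate combined with \autoref{est-Gamma} and \autoref{Lebes} yields $\dbE[\int_0^T|I^\e(s)-I(s)|\,ds]=o(\e)$ for almost every $t_0$.

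With these ingredients the remainder proceeds exactly as in \autoref{estimate-last}: applying It\^o's formula to $\tilde{\Gamma}^\e\mathcal{Y}^\e$ with $\mathcal{Y}^\e:=\hat{y}^\e-\hat{y}$ produces the analogue of \eqref{y}, the duality identity converts the leading term into $\dbE[\int_0^T I^\e\,ds]$, and the Taylor-remainder families $U_1,\tilde{\Delta}f'^\e,\tilde{\Delta}f''^\e$ together with $(1-\tilde{\Gamma}^\e\tilde{\Gamma}^{-1})I$ are controlled by \autoref{est-Gamma}, \autoref{y11} and dominated convergence, producing $y^{\e_j}(0)-y^*(0)-\hat{y}(0)=o(\e_j)$ along a subsequence $\e_j\downarrow 0$ and at a.e.\ $t_0\in[0,T]$. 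Optimality of $u^*$ then gives
\begin{equation*}
0\leq y^{\e_j}(0)-y^*(0)=\hat{y}(0)+o(\e_j)=\dbE\Big[\int_{t_0}^{t_0+\e_j}\Delta\mathcal{H}(s)\,ds\Big]+o(\e_j),
\end{equation*}
and dividing by $\e_j$, letting $j\to\i$, applying \autoref{Lebes} and invoking the standard $\sF_{t_0}$-measurable variation (taking $u\in U$ to be an arbitrary $\sF_{t_0}$-measurable $U$-valued random variable) yields $\Delta\mathcal{H}(t)\geq 0$ for every $u\in U$, a.e.\ $t$, $\dbP$-a.s. The only genuine technical obstacle is the verification of the duality identity displayed above; in the non-delayed case this is a one-line It\^o computation, whereas in \autoref{Section3} it required the substantially more intricate BSVIE analysis of \autoref{non-recursive}.
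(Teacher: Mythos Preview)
Your approach---specializing the machinery of \autoref{Section3} to the non-delayed system---is exactly what the paper does: \autoref{SMP-classical} is stated in \autoref{section4} without a separate proof, as a direct consequence of the general theory, with \eqref{first-ad}--\eqref{sec-ad} being the non-Volterra reductions of \eqref{adjointe}--\eqref{Adjointe}.

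One claim in your proposal is incorrect, though it does not break the argument. You assert that the subsequence extraction of \autoref{est-q}--\autoref{est-I-I} becomes unnecessary because there is only a single spike interval. But the subsequence device has nothing to do with the companion interval $[t_0+\delta,t_0+\delta+\e]$: it is forced by the fact that one only has $\dbE[|\tilde{\Gamma}^\e(t)-\tilde{\Gamma}(t)|^\beta]=O(\e^{\beta/2})$, not $o(\e^{\beta/2})$ (see the remark following \autoref{est-q}), and this estimate is unchanged in the non-delayed setting because $\tilde{\Gamma}^\e$ still depends on $\hat{y}^\e,\hat{z}^\e$ through $\tilde{f}^\e_y,\tilde{f}^\e_z$, for which \autoref{y11} gives only $O(\e^{\beta/2})$. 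From $\int_0^T\dbE[|q^\e-q|^2]\,dt=O(\e)$ one cannot conclude $\dbE\big[\int_{t_0}^{t_0+\e}|q^\e-q|^2\,dt\big]=o(\e)$ by a direct $L^2$ estimate and \autoref{Lebes} alone; the passage to a subsequence is still required. Since you retain the subsequence structure in your final step anyway, the overall argument survives---just drop the intermediate simplification claim.
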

\begin{remark}
It can be verified that our adjoint equations (\eqref{first-ad} and \eqref{sec-ad}) and maximum principle obtained in this section are the same as those in Yong \cite{Yong2010}, whereas they are not the same as those in Hu \cite{Hu}.
\end{remark}
\section{Appendix}\label{section 5}
In this section, we give the proof of \autoref{thm2.1}.
\begin{proof}[{ \bf Proof of \autoref{thm2.1}}]
On one hand, let $O = \Big\{r \Big|\big\{(3^\beta+1)4^{\beta+1}k_1^\beta+2^{\beta}3^{\beta+1}k_1^\beta\big\}(r^\beta + r^{\frac{\beta}{2}}) \leq \frac{1}{2^\beta}\Big\}$, where $k_1$ is a common bound of $\partial b$ and $ \partial \sigma$. It is obvious that we can choose $\e_0 > 0$ such that $\e_0\in O$ and $\frac{T}{\e_0} \in \dbN$.\\
\emph{Step 1 : Construction of the mapping $\Pi$}\\
For any given $x \in S^\beta_{\dbF}(-\delta,\e_0;\dbR^{n})$, we define
\begin{equation}\Pi_{x}(t) = \left\{\begin{aligned}
		&\xi(0) + \int_{0}^{t}b(s, x(s), x_\delta(s), \tilde{x}(s), u(s), \mu(s))ds\\ 
		&+ \int_{0}^{t}\sigma(s, x(s), x_\delta(s), \tilde{x}(s), u(s), \mu(s))dW(s), \q t\in [0, \e_0];\\
		&\xi(t), \q t\in [-\delta, 0].
	\end{aligned}\right.\end{equation}
Then it follows from \autoref{A1} and Burkholder-Davis-Gundy’s inequality that
\begin{align}\label{estimate}
	&\dbE\Big[\mathop{\sup}\limits_{t \in [-\delta,\e_0]}|\Pi_{x}(t)|^\beta\Big] \leq \dbE\Big[\mathop{\sup}\limits_{t \in [0,\e_0]}|\Pi_{x}(t)|^\beta\Big] + \|\xi\|_{S^\beta_{\dbF}(-\delta,0;\dbR^{n})}^\beta \nonumber\\
	%
	&\leq (3^\beta+1)4^{\beta+1}k_1^\beta(\e_0^{\frac{\beta}{2}}+\e_0^\beta)\dbE\Big[\mathop{\sup}\limits_{t\in [-\delta, \e_0]}|x(t)|^\beta\Big] 
	+ (3^\beta+1)4^\beta\dbE\Big[\Big(\displaystyle\int_{0}^{\e_0}|b(s, 0, 0,0, u(s), \mu(s))|ds\Big)^{\beta}  \nonumber\\
	&\q+ (3^\beta+1)27^{\frac{\beta}{2}}\Big(\displaystyle\int_{0}^{\e_0}|\sigma(s, 0, 0,0, u(s), \mu(s))|^2ds\Big)^{\frac{\beta}{2}}\Big] + (3^\beta+1)\|\xi\|_{S^\beta_{\dbF}(-\delta,0;\dbR^{n})}^\beta \nonumber\\
	&\leq \frac{1}{2}\dbE\Big[\mathop{\sup}\limits_{t\in [-\delta, \e_0]}|x(t)|^\beta\Big] 
	+ (3^\beta+1)4^\beta\dbE\Big[\Big(\displaystyle\int_{0}^{\e_0}|b(s, 0, 0, 0, u(s), \mu(s))|ds\Big)^{\beta}  \nonumber\\
	&\q+ (3^\beta+1)27^{\frac{\beta}{2}}\Big(\displaystyle\int_{0}^{\e_0}|\sigma(s, 0, 0,0, u(s),\mu(s))|^2ds\Big)^{\frac{\beta}{2}}\Big] + (3^\beta+1)\|\xi\|_{S^\beta_{\dbF}(-\delta,0;\dbR^{n})}^\beta  < \i,
\end{align}
which implies $\Pi_{x} \in S^\beta_{\dbF}(-\delta,\e_0;\dbR^{n})$. Therefore, we can define a mapping $\Pi$ in $S^\beta_{\dbF}(-\delta,\e_0;\dbR^{n})$.\\
\emph{Step 2 : The mapping $\Pi : S^\beta_{\dbF}(-\delta,\e_0;\dbR^{n}) \rightarrow S^\beta_{\dbF}(-\delta,\e_0;\dbR^{n})$ is a contraction}\\
For any $x, x' \in  S^\beta_{\dbF}(-\delta,\e_0;\dbR^{n})$, we have that for $t\in [0, \e_0]$
\begin{align*}
	&\Pi_{x}(t) - \Pi_{x'}(t) \\
	&= \int_{0}^{\e_0} \Big\{b(s, x(s), x_\delta(s), \tilde{x}(s), u(s), \mu(s)) - b(s, x'(s), x'_\delta(s), \tilde{x}'(s), u(s), \mu(s))\Big\}ds \\
	&\q+ \int_{0}^{\e_0} \Big\{\sigma(s, x(s), x_\delta(s), \tilde{x}(s), u(s), \mu(s)) - \sigma(s, x'(s), x'_\delta(s), \tilde{x}'(s), u(s), \mu(s))\Big\}dW(s)
\end{align*}
and for $t\in [-\delta, 0]$, $\Pi_{x}(t) - \Pi_{x'}(t) = 0$.
It then follows from  Burkholder-Davis-Gundy’s inequality that 
\begin{align*}
	&\|\Pi_{x} - \Pi_{x'}\|^\beta_{S^\beta_{\dbF}(-\delta,\e_0;\dbR^{n})} = \dbE\Big[\mathop{\sup}\limits_{t \in [-\delta,\e_0]}|\Pi_{x}(t) - \Pi_{x'}(t)|^\beta\Big]\\
	%
	%
	&\leq 2^\beta k_1^\beta \dbE\bigg\{ \Big(\int_{0}^{\e_0}\Big\{|x(s)- x'(s)| + |x_\delta(s)- x'_\delta(s)| + |\tilde{x}(s) - \tilde{x}'(s)|\Big\} ds\Big)^\beta \\
	&\q\q\q\q+ \Big(3\int_{0}^{\e_0}\Big\{|x(s)- x'(s)|^2 + |x_\delta(s)- x_\delta'(s)|^2 + |\tilde{x}(s)- \tilde{x}'(s)|^2\Big\} ds\Big)^{\frac{\beta}{2}}  \bigg\}\\
	& \leq 2^\beta k_1^\beta (\e_0^p + \e_0^{\frac{\beta}{2}}) \Big\{3^\beta \|x-x'\|^\beta_{S^\beta_{\dbF}(-\delta,\e_0;\dbR^{n})} + 9^{\frac{\beta}{2}} \|x-x'\|^\beta_{S^\beta_{\dbF}(-\delta,\e_0;\dbR^{n})} \Big\}\\
	& \leq 2^{\beta}3^{\beta+1}k_1^\beta (\e_0^\beta + \e_0^{\frac{\beta}{2}}) \|x-x'\|^\beta_{S^\beta_{\dbF}(-\delta,\e_0;\dbR^{n})}
	\leq \frac{1}{2^\beta}\|x-x'\|^\beta_{S^\beta_{\dbF}(-\delta,\e_0;\dbR^{n})}.
\end{align*}
Therefore, by contraction mapping principle, we deduce that the SDDE in \eqref{system} has a unique solution $x^{(1)}\in S^\beta_{\dbF}(-\delta,\e_0;\dbR^{n})$.\\
\emph{Step 3 : Splice the local solution}\\
Consider the following SDDE:
\begin{equation}\left\{\begin{aligned}
		&dx(t) = b(t, x(t), x_\delta(t), \tilde{x}(t), u(t), \mu(t))dt + \sigma(t, x(t), x_\delta(t), \tilde{x}(t), u(t), \mu(t))dW(t),\q t\in [\e_0,2\e_0];\\
		&x(t) =x^{(1)}(t), \q t\in [-\delta, \e_0].
	\end{aligned}\right.\end{equation}
By a similar argument as in step 1 and step 2, we deduce that the SDE in \eqref{system} has a unique solution $x^{(2)}\in S^\beta_{\dbF}(-\delta,2\e_0;\dbR^{n})$. It is obvious that $x^{(\frac{T}{\e_0})} \in S^\beta_{\dbF}(-\delta,T;\dbR^{n})$ is the unique solution of the SDE in \eqref{system}. For notational convenience, we denote $x$ the unique solution of SDE in \eqref{system}.\\
\emph{Step 4 : The estimate of $x$}\\
Similarly for the argument to get \eqref{estimate} on interval $[-\delta, \e_0]$, we deduce that
\begin{align*}
	\dbE\Big[\mathop{\sup}\limits_{t\in [-\delta, \e_0]}|x(t)|^\beta\Big]&\leq \frac{1}{2}\dbE\Big[\mathop{\sup}\limits_{t\in [-\delta, \e_0]}|x(t)|^\beta\Big] 
	+ (3^\beta+1)4^\beta\dbE\Big[\Big(\displaystyle\int_{0}^{\e_0}|b(s, 0, 0,0, u(s),\mu(s))|ds\Big)^{\beta}  \nonumber\\
	&\q+ (3^\beta+1)27^{\frac{\beta}{2}}\Big(\displaystyle\int_{0}^{\e_0}|\sigma(s, 0, 0,0, u(s),\mu(s))|^2ds\Big)^{\frac{\beta}{2}}\Big] + (3^\beta+1)\|\xi\|_{S^\beta_{\dbF}(-\delta,0;\dbR^{n})}^\beta.
\end{align*}
Then, we obtain 
\begin{equation}
	\begin{aligned}\label{1}
		\|x\|^{\beta}_{S^\beta_{\dbF}(-\delta,\e_0;\dbR^{n})} 
		\leq 
		K \Big\{&\dbE\Big[\Big(\displaystyle\int_{0}^{\e_0}|b(s, 0, 0,0, u(s),\mu(s))|ds\Big)^{\beta} + \Big(\displaystyle\int_{0}^{\e_0}|\sigma(s, 0, 0,0, u(s),\mu(s))|^2ds\Big)^{\frac{\beta}{2}}\Big] \\ 
		&+\|\xi\|_{S^\beta_{\dbF}(-\delta,0;\dbR^{n})}^\beta \Big\}.
	\end{aligned}
\end{equation}
Now, similarly for the argument to get \eqref{1} on interval $[-\delta, 2\e_0]$, we deduce that
\begin{align*}
	&\|x\|^{\beta}_{S^\beta_{\dbF}(-\delta,2\e_0;\dbR^{n})} \\
	&\leq 
	K \Big\{\dbE\Big[\Big(\displaystyle\int_{\e_0}^{2\e_0}|b(s, 0, 0, 0, u(s), \mu(s))|ds\Big)^{\beta} + \Big(\displaystyle\int_{\e_0}^{2\e_0}|\sigma(s, 0, 0, 0, u(s), \mu(s))|^2ds\Big)^{\frac{\beta}{2}}\Big]+ 
	\|x\|_{S^\beta_{\dbF}(-\delta,\e_0;\dbR^{n})}^\beta \Big\}\\
	&
	\leq 
	K \Big\{\dbE\Big[\Big(\displaystyle\int_{0}^{2\e_0}|b(s, 0, 0, 0, u(s), \mu(s))|ds\Big)^{\beta} + \Big(\displaystyle\int_{0}^{2\e_0}|\sigma(s, 0, 0, 0, u(s), \mu(s))|^2ds\Big)^{\frac{\beta}{2}}\Big]+ 
	\|\xi\|_{S^\beta_{\dbF}(-\delta,0;\dbR^{n})}^\beta \Big\}.
\end{align*}
By induction, we can get the desired result immediately.
On the other hand,
according \autoref{prop3.1}, we can get the desired results of $(y, z)$ immediately. 
\end{proof}
\noindent {\bf Acknowledgements.} The author would like to thank Prof. Ying Hu and Prof. Tianxiao Wang for pointing out errors in an earlier version of the manuscript, and Prof. Jiaqiang Wen and Prof. Mingshang Hu for helpful discussions.

\end{document}